\let\pa\partial
\let\na\nabla
\newcommand{\N}{{\mathbb N}}
\newcommand{\R}{{\mathbb R}}
\newcommand{\divM}{{\operatorname{div}}}
\newcommand{\dd}{\mathrm{d}}
\newcommand{\phm}{{\phantom{$-$}}}
\newcommand{\pvec}[1]{\vec{#1}\mkern2mu\vphantom{#1}}
\def\softd{{\leavevmode\setbox1=\hbox{d}%
		\hbox to 1.05\wd1{d\kern-0.4ex{\char039}\hss}}}%cstocs
\journalname{Journal of Scientific Computing}
\begin{document}

\title{A structure-preserving numerical method for quasi-incompressible \\
Navier--Stokes--Maxwell--Stefan systems\thanks{A.~J.\ and M.~L.-M.\ gratefully acknowledge the Research in Teams Fellowship ``Entropy methods for evolutionary systems: analysis and numerics" of the Erwin Schr\"odinger International Institute for Mathematics and Physics, Vienna. A.~J.\ acknowledges partial support from the Austrian Science Fund (FWF), grants 10.55776/F65 and 10.55776/P33010.
This work has received funding from the European Research Council (ERC) under the European Union's Horizon 2020 research and innovation programme, ERC Advanced Grant NEUROMORPH, no.~101018153. The work of M.~L.-M.\ was supported by the Gutenberg Research College and by the Deutsche Forschungsgemeinschaft (DFG, German Research Foundation) -- project number 233630050 -- TRR 146 and	project number 525800857 -- SPP 2410 ``Hyperbolic Balance Laws: Complexity, Scales and Randomness".
She is also grateful to the Mainz Institute of Multiscale Modelling  for supporting her research.
A. B. was supported by the Gutenberg Research College and by the DFG -- project number 233630050 -- TRR 146 and	project number 441153493 -- SPP 2256 ``Variational Methods for Predicting Complex Phenomena in Engineering Structures and Materials". For open-access purposes, the authors have applied a CC BY public copyright license to any author-accepted manuscript version arising from this submission.} 
}
%\subtitle{Do you have a subtitle?\\ If so, write it here}

\titlerunning{A numerical method forquasi-incompressible Maxwell--Stefan--Navier--Stokes systems}        % if too long for running head

\author{Aaron Brunk        \and
        Ansgar Jüngel      \and
        M\'aria Luk\'a\v{c}ov\'a-Medvi\softd ov\'a
}

%\authorrunning{Short form of author list} % if too long for running head

\institute{A. Brunk \at
              Institute of Mathematics, Johannes Gutenberg-University Mainz, Staudinger Weg 9, 55288 Mainz, Germany \\
              %Tel.: +123-45-678910\\
              %Fax: +123-45-678910\\
              \email{abrunk@uni-mainz.de}           %  \\
           \and
           A. J\"ungel \at
              Institute of Analysis and Scientific Computing, TU Wien, Wiedner Hauptstra\ss e 8--10, 1040 Wien, Austria \\ 
              Erwin Schr\"odinger International Institute for Mathematics and Physics, Boltzmanngasse 9, 1090 Wien, Austria \\
              \email{juengel@tuwien.ac.at}
          \and
           M. Luk\'a\v{c}ov\'a-Medvi\softd ov\'a \at
              Institute of Mathematics, Johannes Gutenberg-University Mainz, Staudinger Weg 9, 55288 Mainz, Germany \\ 
              Erwin Schr\"odinger International Institute for Mathematics and Physics, Boltzmanngasse 9, 1090 Wien, Austria \\
              \email{lukacova@uni-mainz.de}
}

\date{Received: date / Accepted: date}
% The correct dates will be entered by the editor

\maketitle

\begin{abstract}
A conforming finite element scheme with mixed explicit--implicit time discretization for quasi-incompressible Navier--Stokes--Maxwell--Stefan systems in a bounded domain with periodic boundary conditions is presented. The system consists of the Navier--Stokes equations, together with a quasi-incompressibility constraint, coupled with the cross-diffusion Maxwell--Stefan equations. The numerical scheme preserves the partial masses and the quasi-incompressibility constraint and dissipates the discrete energy. Numerical experiments in two space dimensions illustrate the convergence of the scheme and the structure-preserving properties. 

\keywords{Quasi-incompressible Navier--Stokes equations \and Maxwell--Stefan equations \and cross-diffusion systems \and finite element method \and structure-preserving numerical scheme \and discrete mass conservation \and energy dissipation}
% \PACS{PACS code1 \and PACS code2 \and more}
\subclass{65M60 \and  65N30 \and  76T30 \and  80M10.}
\end{abstract}
\section{Introduction}

Many applications for multicomponent mixtures, like ion transport in biological channels, the dynamics of electrolytes in lithium-ion batteries, or helium--oxygen--carbon dioxide mixtures in the lung, can be described by Maxwell--Stefan equations. In the Fick--Onsager form, they consist of nonlinear parabolic cross-diffusion equations whose diffusion matrix (called mobility matrix) is positive semidefinite only \cite{BoDr23}. In the hydrostatic and isothermal case, the variables are the partial mass densities that satisfy a volume-filling constraint implying a constant total mass density. In the general case, the equations are coupled to the compressible or incompressible Navier--Stokes equations. In this paper, we consider the quasi-incompressible model of \cite{Dru21} and suggest, for the first time, a provable structure-preserving finite element scheme. 

\subsection{Model equations}

The model describes the mass transport of the liquid or gas components by convection and diffusion (for the partial mass densities $\rho_1,\ldots,\rho_N$) as well as the momentum balance (for the barycentric velocity $\bm{u}$ and the pressure $p$):
\begin{align}
  & \pa_t\rho_i + \divM(\rho_i\bm{u}) + \divM\bm{J}_i = 0,
  \label{1.mass} \\
  & \pa_t(\rho\bm{u}) + \divM\big(\rho\bm{u}\otimes\bm{u}
  - \mathbb{S}(\vec\rho,\na\bm{u}) + p\mathbb{I}\big) = 0\quad\mbox{in }\Omega,
  \ t>0, \label{1.mom} \\
  & \rho_i(\cdot,0) = \rho_i^0, \quad 
  (\rho\bm{u})(\cdot,0) = (\rho\bm{u})^0\quad\mbox{in }\Omega,\ 
  i=1,\ldots,N. \label{1.ic}
\end{align}
Here, $\Omega\subset\R^d$ is a bounded domain (we choose the torus), $\mathbb{I}\in\R^{N\times N}$ is the unit matrix, the total mass density is defined by $\rho:=\sum_{i=1}^N\rho_i$, and $\bm{J}_i$ are the diffusion fluxes satisfying the side condition $\sum_{i=1}^N\bm{J}_i=0$, which implies after summation of \eqref{1.mass} over $i=1,\ldots,N$ the total mass conservation $\pa_t\rho+\divM(\rho\bm{u})=0$. The stress tensor $\mathbb{S}(\vec\rho,\na\bm{u})$ is assumed to satisfy the coercivity condition $\mathbb{S}(\vec\rho,\na\bm{u}):\na\bm{u}\ge 0$, where ``:'' is the Frobenius matrix product (summation over both indices). A typical example is $\mathbb{S}(\vec\rho,\na\bm{u})=\nu(\na\bm{u}^T+\na\bm{u}) + \lambda(\divM\bm{u})\mathbb{I}$, where $\nu>0$ and $\lambda\geq -\tfrac{2}{d}\nu$ may depend on the density vector $\vec\rho:=(\rho_1,\ldots,\rho_N)$, and the coercivity condition follows from Korn's inequality \cite[Sec.~10.9]{FeNo09}.

The partial diffusion fluxes are given as linear combinations of the gradients of the chemical potentials $\mu_i$,
\begin{align}\label{1.flux}
  \bm{J}_i = -\sum_{j=1}^N M_{ij}(\vec\rho)\na\mu_j, \quad
  i=1,\ldots,N.
\end{align}
The mobility (or Onsager) matrix $M=(M_{ij})_{i,j=1}^N \in\R^{N\times N}$ is assumed to be symmetric and positive semi-definite satisfying $\sum_{j=1}^N M_{ij}(\vec\rho)=0$ for all $\vec\rho\in[0,\infty)^N$. This condition is consistent with the side condition of vanishing net diffusion flux. If all Maxwell--Stefan diffusion coefficients are equal, the mobility matrix has the entries
\begin{align}\label{1.M}
  M_{ij}(\vec\rho) = \rho_i\delta_{ij} - \frac{\rho_i\rho_j}{\rho},
  \quad i.j=1,\ldots,N.
\end{align}
We use this expression in our numerical experiments, but the numerical analysis does not rely on this choice. The chemical potentials $\mu_i$ are defined by 
\begin{align}\label{1.chem}
  \mu_i = \frac{\pa f}{\pa\rho_i}(\vec\rho) + V_ip, \quad i=1,\ldots,N,
\end{align}
where $f$ is the internal energy density and $V_i>0$ are the constant specific volumes of the molecules. A typical example for ideal gas mixtures is given by $f(\vec\rho) = \sum_{i=1}^N\rho_i \log(\rho_i/\rho)$, used for the numerical tests. Note that our numerical analysis holds for general energy densities that are positively homogeneous of degree one. The incompressibility constraint is imposed by
\begin{align}\label{1.incom}
   \sum_{i=1}^N V_i\rho_i = 1.
\end{align}
Observe that definition \eqref{1.chem} is consistent with the Gibbs--Duhem relation
\begin{align*}
  -f(\vec\rho) + \sum_{i=1}^N\rho_i\mu_i
  = -\sum_{i=1}^N\rho_i\frac{\pa f}{\pa\rho_i}
  + \sum_{i=1}^N\rho_i\mu_i = \sum_{i=1}^N \rho_i V_i p = p,
\end{align*} 
where the first identity follows from the positive homogeneity of $f$ and the last identity is a consequence of \eqref{1.incom}. 

In the case of equal specific volumes $V_i=V>0$ for all $i=1,\ldots,N$, we recover the volume-filling condition $\sum_{i=1}^N\rho_i=1/V$, which implies from mass conservation \eqref{1.mass} the standard divergence-free condition $\divM\bm{u}=0$. This yields the incompressible Navier--Stokes equations
\begin{align}\label{1.momim}
  V^{-1}\pa_t\bm{u} + \divM\big(V^{-1}\bm{u}\otimes\bm{u} 
  - \mathbb{S}(\vec\rho,\na\bm{u}) + p\mathbb{I}\big) = 0, \quad
  \divM\bm{u}=0,
\end{align}
 that are decoupled from the Maxwell--Stefan equations; see, e.g., \cite{ChJu15,MaTe15}. 

In the general case, we multiply \eqref{1.mass} by $V_i$ and sum over $i=1,\ldots,N$:
\begin{align}\label{1.VJ}
  0 = \pa_t\bigg(\sum_{i=1}^N V_i\rho_i\bigg) 
  + \divM\bigg(\sum_{i=1}^N V_i\rho_i\bm{u}\bigg) 
  + \divM\bigg(\sum_{i=1}^N V_i\bm{J}_i\bigg)
  = \divM\bm{u} + \divM\bigg(\sum_{i=1}^N V_i\bm{J}_i\bigg),
\end{align}
which is interpreted as a quasi-incompressibility constraint. In particular, a net local change of volume is possible because of mixing. Summarizing, we can formulate our system \eqref{1.mass}--\eqref{1.incom} as 
\begin{align}
  & \pa_t\rho_i + \divM(\rho_i\bm{u})
  - \divM\bigg(\sum_{i,j=1}^N M_{ij}(\vec\rho)\na\mu_j\bigg) = 0, 
  \label{1.mass2} \\
  & \pa_t(\rho\bm{u}) + \divM\big(\rho\bm{u}\otimes\bm{u}
  - \mathbb{S}(\vec\rho,\na\bm{u}) + p\mathbb{I}\big) = 0, \label{1.mom2} \\
  & \mu_i = \frac{\pa f}{\pa\rho_i} + V_ip, \quad i=1,\ldots,N,
  \label{1.mu} \\
  & \divM\bm{u} = \divM\bigg(\sum_{i,j=1}^N V_iM_{ij}(\vec\rho)
  \na\mu_j\bigg). \label{1.divMu}
\end{align}

An analogous model can be derived when the quasi-incompressibility constraint \eqref{1.incom} is neglected. In this case, no equation for the divergence \eqref{1.divMu} and no contribution of the pressure $p$ in the chemical potentials appear, cf.\ \cite{Dru21b,Eike25}. Therefore, the system needs to be considered with an equation of state for the pressure.

\subsection{Main results}

The aim of this paper is the design of a structure-preserving finite element method for system \eqref{1.mass2}--\eqref{1.divMu}. In particular, the numerical scheme preserves the partial masses and the quasi-incompressibility constraint \eqref{1.incom}, conserves the total mass, and satisfies an energy inequality. The energy density of the system is given by the sum of the kinetic and internal energy densities,
\begin{align}\label{1.E}
  E(\vec\rho,\bm{u}) = \frac{\rho}{2}|\bm{u}|^2 + f(\vec\rho).
\end{align}
We show in Lemma \ref{lem.energy} that the following energy identity holds for smooth solutions:
\begin{align*}
  \frac{\dd}{\dd t}\int_\Omega E(\vec\rho,\bm{u})\dd x
  + \int_\Omega\bigg(\mathbb{S}(\vec\rho,\na\bm{u}):\na\bm{u}
  + \sum_{i,j=1}^N M_{ij}(\vec\rho)\na\mu_i\cdot\na\mu_j\bigg)\dd x = 0.
\end{align*}

We prove two main results. First, with $(\rho_{h,i}^k,\bm{u}_h^k)$ being the finite element approximation of $(\rho_i,\bm{u})$ at time $t_k=k\tau$ (with time step $\tau>0$), we show in Theorem \ref{thm.id} that the discrete partial masses are preserved and that the discrete energy density $E_h^k=\frac12\rho_h^k|\bm{u}_h^k|^2+f(\vec\rho_h^k)$ (with $\rho_h^k:=\sum_{i=1}^N\rho_{h,i}^k$) satisfies
\begin{align*}
  \frac{1}{\tau}&\int_\Omega(E_h^{k+1}-E_h^k)\dd x \\
  &= -\int_\Omega\bigg(\mathbb{S}(\pvec{\rho}_{h}^{*},\na\bm{u}_h^{k+1}):\na\bm{u}_h^{k+1}
  + \sum_{i,j=1}^N M_{ij}(\pvec{\rho}_{h}^{*})\na\mu_{h,i}^{k+1}\cdot
  \na\mu_{h,j}^{k+1}\bigg)\dd x - \mathcal{D}_{\rm num} \le 0,
\end{align*}
where $\rho_{h,i}^*$ depends on $(\rho_{h,j}^{k},\rho_{h,i}^{k+1})$ and $\mu_{h,i}^{k+1}$ is the discrete chemical potential. The numerical dissipation $\mathcal{D}_{\rm num}$ is nonnegative and depends on the discrete kinetic defect and the Hessian of the internal energy density. 

Second, we prove that the constraint $\sum_{i=1}^N V_i\rho_{h,i}^k=1$ holds pointwise in $\Omega$ and that the discrete total mass density is strictly positive; see Theorem \ref{thm.bd}. 

The discrete energy inequality yields some a priori bounds detailed in Theorem \ref{thm.bd}. However, the existence of a discrete solution to scheme \eqref{1.mass2}--\eqref{1.divMu} is still an open problem. The existence of a global weak solution to the continuous problem in a bounded domain with Neumann boundary conditions was shown in \cite{Dru21}. The pressure turns out to be a measure with singular values at the minimal and maximal values of the total mass density. Thus, it is not clear to what extent the analysis of \cite{Dru21} can be applied to the finite element case. Another approach is the local-in-time existence result of \cite{BoDr21}, which yields an integrable pressure function. The idea of the proof is a reformulation of the problem in terms of so-called relative chemical potentials and the construction of a fixed point mapping. As the arguments are quite involved, it is not known whether the arguments hold in the finite-dimensional case.

\subsection{State of the art} 

The existence of global weak solutions to the incompressible Navier--Stokes--Maxwell--Stefan equations (i.e.\ $V_i=V$ for $i=1,\ldots,N$) was shown in \cite{BoPr17,ChJu15,DoDo21,MaTe15}. Existence results for the quasi-incompressible model (i.e.\ for general $V_i>0$) were presented in \cite{BoDr21,Dru21}. Analytical results for quasi-incompressible fluids, related but different to \eqref{1.incom}, can be found in, e.g., \cite{FLM16,GTP15}. In \cite{GTP15}, a mass conservative and energy dissipative discontinuous Galerkin finite element discretization for quasi-incompressible Navier--Stokes--Cahn--Hilliard systems was presented. In \cite{AFMV25,Van-Brunt22} numerical approximations of coupled Maxwell-–Stefan fluid models using finite elements methods are considered, where steady-state Maxwell-–Stefan equations are coupled with a compressible Stokes flow. Similar steady-state models and discretizations in the quasi-incompressible case are studied in \cite{Baier25}, using an explicit formulation of the forces instead of the fluxes. To include additional effects, like in \cite{VBru23}, the explicit force formulation might be more appropriate. However, structure-preserving numerical methods for non-stationary quasi-incompressible Navier--Stokes--Maxwell--Stefan systems are missing in the literature.

The literature is much richer for the Maxwell--Stefan equations with vanishing barycentric velocity. The existence of weak solutions was proved in \cite{Bot11,GeJu24,JuSt13}, and the analysis for heat-conducting flows was investigated in \cite{HeJu21,HuSa18}. Numerical works include explicit Euler finite difference schemes in one space dimension \cite{BGS12}, iterative solvers for such discretizations \cite{Gei15}, and mixed finite element methods \cite{McBo14}. More recently, numerical approximations focused on structure-preserving schemes. In \cite{HLTW21}, an energy-stable and positivity-preserving IMEX finite difference discretization was proposed, based on an equivalent optimization problem. The work \cite{CEM24} suggested a convergent finite volume method that preserves the non-negativity of the densities, the mass conservation, and the volume-filling constraint. For general cross-diffusion equations, including the Maxwell--Stefan systems, structure-preserving schemes were investigated in \cite{BPS22} (Galerkin approximation) and \cite{JuZu23} (finite volume discretization).  

\medskip
The paper is organized as follows. We derive in Section \ref{sec.2} various formulations of our model and show the energy identity. The numerical scheme and its structure-preserving properties are proved in Section \ref{sec.num}. This section contains our main analytical results. Finally, in Section \ref{sec.exp}, we present some numerical experiments. 

%%%%%%%%%%%%%%%%%%%%%%%%%%%%%%%%%%%%%%%%%%%%%%%%%%%%%%%%%%%%%%%

\section{Various formulations}\label{sec.2}

We introduce the following notation. Vectors in $\R^N$ are written as $\vec{v}$, while vectors in $\R^d$ are denoted by $\bm{v}$. Matrices are formulated as $\mathbb{M}$. The $L^2(\Omega)$ inner product is denoted by $\langle\cdot,\cdot\rangle$, and we set $\mathrm{1}=(1,\ldots,1)^T \in\R^N$. We impose the following assumptions:
\begin{itemize}
\item[(A0)] Domain: let $\Omega=(0,1)^d$ be topologically equivalent to the $d$-dimensional torus, $d\ge 1$. This means we enforce periodic boundary conditions.
\item[(A1)] Mobility matrix: $\mathbb{M}(\vec\rho)=(M_{ij}(\vec\rho))_{i,j=1}^N \in(0,\infty)^{N\times N}$ is symmetric positive semi-definite with $\mathrm{1}\in\mathrm{ker}(\mathbb{M})$.
\item[(A2)] Internal energy density: $f:[0,\infty)^N\to\R^N$ is smooth and positively homogeneous of degree one.
\item[(A3)] Viscous stress tensor: $\mathbb{S}:[0,\infty)^N\times\R^{d\times d}\to\R^{d\times d}$ satisfies $\mathbb{S}(\vec\rho,\na\bm{u}):\na\bm{u}\ge c_S|\na\bm{u}|^2$ for all $\na\bm{u}\in\R^{d\times d}$ and for some $c_S>0$.
\item[(A4)] Initial data and specific volumes: $\rho_i^0\ge 0$ and $(\rho\bm{u})^0$ are integrable functions. It holds that $\sum_{i=1}^N V_i\rho_i^0=1$ in $\Omega$, where $V_i>0$ for $i=1,\ldots,N$.
\end{itemize}

We call a function $f$ positively homogeneous of degree one if $f(\lambda\vec\rho)=\lambda f(\vec\rho)$ for all $\lambda>0$ and $\vec\rho\in[0,\infty)^N$. By Euler's homogeneous function theorem, any positively homogeneous function $f$ of degree one satisfies $f(\vec\rho)=\sum_{i=1}^N\rho_i\pa f/\pa\rho_i$. This implies, for smooth functions $\vec\rho=\vec\rho(x)$, that
\begin{align}\label{2.one}
  \sum_{i=1}^N\rho_i\na\frac{\pa f}{\pa\rho_i}
  = \na\sum_{i=1}^N\rho_i\frac{\pa f}{\pa\rho_i}
  - \sum_{i=1}^N\na\rho_i\frac{\pa f}{\pa\rho_i}
  = \na\sum_{i=1}^N\rho_i\frac{\pa f}{\pa\rho_i} 
  - \na f(\vec{\rho}) = 0.
\end{align}
Observe that any positively homogeneous function is convex (but not strictly convex). 

\begin{lemma}[Reformulation]
Equations \eqref{1.mass}--\eqref{1.flux}, \eqref{1.chem}--\eqref{1.incom} can be written as \eqref{1.mass2}--\eqref{1.divMu} for smooth solutions. If $V_i=V>0$ for $i=1,\ldots,N$, equations \eqref{1.mom2} and \eqref{1.divMu} reduce to \eqref{1.momim}. 
\end{lemma}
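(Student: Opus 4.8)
The plan is to verify the four equations \eqref{1.mass2}--\eqref{1.divMu} one at a time, treating the first three as direct substitutions and reserving the only genuine computation for \eqref{1.divMu}. Equation \eqref{1.mass2} is obtained by inserting the flux ansatz \eqref{1.flux} into the partial mass balance \eqref{1.mass}; equation \eqref{1.mom2} is literally identical to \eqref{1.mom}; and \eqref{1.mu} merely repeats the definition \eqref{1.chem} of the chemical potentials. I would also record that the side condition $\sum_{i=1}^N\bm{J}_i=0$ is automatically consistent with \eqref{1.flux}: by symmetry of $\mathbb{M}$ and assumption (A1) one has $\sum_{i=1}^N M_{ij}=0$ for every $j$, hence $\sum_{i=1}^N\bm{J}_i=-\sum_{j=1}^N(\sum_{i=1}^N M_{ij})\na\mu_j=0$.

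For \eqref{1.divMu} I would multiply \eqref{1.mass} by $V_i$, sum over $i=1,\ldots,N$, and use the constraint \eqref{1.incom}: the time derivative $\pa_t(\sum_i V_i\rho_i)=\pa_t 1=0$ drops out, the convective term collapses to $\divM(\sum_i V_i\rho_i\bm{u})=\divM\bm{u}$, and the flux term, after inserting \eqref{1.flux}, becomes $\divM(\sum_i V_i\bm{J}_i)=-\divM(\sum_{i,j}V_iM_{ij}\na\mu_j)$. This is precisely the chain of identities \eqref{1.VJ}, and rearranging yields \eqref{1.divMu}. Conversely, to see that the constraint is compatible with the reformulated system, I would multiply \eqref{1.mass2} by $V_i$, sum, insert \eqref{1.divMu}, and obtain that $w:=\sum_{i=1}^N V_i\rho_i-1$ solves the pure transport equation $\pa_t w+\divM(w\bm{u})=0$; by (A4) the initial datum vanishes, so $w\equiv 0$ for smooth $\bm{u}$ and \eqref{1.incom} propagates. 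The Gibbs--Duhem relation and Euler's identity \eqref{2.one} are consistency observations here and are not needed for the reformulation itself.

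For the second statement, set $V_i=V>0$ for all $i$. Then \eqref{1.incom} reads $V\sum_{i=1}^N\rho_i=1$, i.e.\ $\rho=1/V$ is constant in space and time. Using again that $\mathbb{M}$ is symmetric with $\mathrm{1}\in\mathrm{ker}(\mathbb{M})$, hence $\mathrm{1}^T\mathbb{M}=0$, the right-hand side of \eqref{1.divMu} vanishes, $\sum_{i,j=1}^N V_iM_{ij}\na\mu_j=V\sum_{j=1}^N(\sum_{i=1}^N M_{ij})\na\mu_j=0$, so \eqref{1.divMu} reduces to $\divM\bm{u}=0$. Since $\rho$ is constant, $\pa_t(\rho\bm{u})=V^{-1}\pa_t\bm{u}$ and $\divM(\rho\bm{u}\otimes\bm{u})=\divM(V^{-1}\bm{u}\otimes\bm{u})$, so \eqref{1.mom2} becomes exactly \eqref{1.momim}. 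The pressure contribution $V_ip=Vp$ in \eqref{1.mu} likewise drops out of the fluxes because $\sum_j M_{ij}\na(Vp)=V(\sum_j M_{ij})\na p=0$, which explains the stated decoupling, although this is not required for the reduction claimed in the lemma.

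The computation is entirely elementary, so I do not anticipate a real obstacle; the only point requiring a little care is conceptual, namely keeping straight whether \eqref{1.incom} is being used as an imposed constraint (in the derivation of \eqref{1.divMu}) or recovered as a propagated constraint (in the converse direction), and, in the second statement, invoking the full symmetry of $\mathbb{M}$ rather than only its kernel property, since it is the row sums $\sum_i M_{ij}$ --- not just the column sums $\sum_j M_{ij}$ --- that must vanish.
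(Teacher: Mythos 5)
Your proposal is correct and follows essentially the same route as the paper: the key step for \eqref{1.divMu} is exactly the computation \eqref{1.VJ} (multiply \eqref{1.mass} by $V_i$, sum, use \eqref{1.incom}), and the reduction for $V_i=V$ uses $\sum_{i=1}^N M_{ij}=0$ together with $\rho=V^{-1}$, just as in the paper. The additional observations you make (propagation of the constraint under the reformulated system, the decoupling of the pressure from the fluxes) are correct but not needed for the lemma, and the paper omits them here.
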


\begin{proof}
Relation \eqref{1.divMu} follows from the computation in \eqref{1.VJ}, proving the first assertion. If $V_i=V$, it follows from $\sum_{i=1}^NM_{ij}(\vec\rho))=0$, i.e.\ (A1), that $\sum_{i=1}^N\bm{J}_i=0$ and hence $\divM\bm{u}=0$. Since $\rho=V^{-1}$, we obtain equations \eqref{1.momim}.
\end{proof}

\begin{lemma}[Positivity of the mass densities]\label{lem.pos}
Let $f(\vec\rho)=\sum_{i=1}^N\rho_i\log(\rho_i/\rho)$ and let $(\vec\rho,\vec\mu,\bm{u},$ $p)$ be a smooth solution to \eqref{1.mass}--\eqref{1.flux}, \eqref{1.chem}--\eqref{1.incom}. Then $\rho_i(t)>0$ in $\Omega$, $t>0$, $i=1,\ldots,N$.
\end{lemma}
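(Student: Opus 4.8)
The plan is to invert the constitutive relations \eqref{1.chem} and \eqref{1.incom} explicitly, exploiting the special logarithmic form of the internal energy. First I would compute $\partial f/\partial\rho_i$ for $f(\vec\rho)=\sum_{j=1}^N\rho_j\log(\rho_j/\rho)$ with $\rho=\sum_k\rho_k$: differentiating term by term gives $\partial f/\partial\rho_i=\log\rho_i+1-\log\rho-\rho^{-1}\sum_j\rho_j=\log(\rho_i/\rho)$, since $\sum_j\rho_j/\rho=1$. Hence the definition \eqref{1.chem} of the chemical potentials reads $\log(\rho_i/\rho)=\mu_i-V_i p$ for $i=1,\ldots,N$. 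For this identity to be meaningful one needs $\rho_i/\rho>0$ pointwise, and it can then be solved as $\rho_i=\rho\,e^{\mu_i-V_i p}$ with $e^{\mu_i-V_i p}>0$.

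Next I would eliminate $\rho$. Multiplying the relation $\rho_i=\rho\,e^{\mu_i-V_i p}$ by $V_i$, summing over $i$, and using the incompressibility constraint \eqref{1.incom} yields $1=\sum_{i=1}^N V_i\rho_i=\rho\sum_{i=1}^N V_i e^{\mu_i-V_i p}$, so that
\[
  \rho=\Big(\sum_{i=1}^N V_i e^{\mu_i-V_i p}\Big)^{-1}.
\]
Because $(\vec\mu,p)$ is part of a smooth solution, it is bounded on $\overline\Omega\times[0,T]$ for every $T>0$; the denominator is then a finite sum of strictly positive, bounded terms, so $\rho$ is smooth and bounded from above and below by strictly positive constants. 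Substituting back, $\rho_i=\rho\,e^{\mu_i-V_i p}>0$ in $\Omega$ for all $t>0$ and all $i$, which is the assertion (and one obtains, as a by-product, uniform bounds $0<c\le\rho_i\le C$ and $\rho^{-1}\le C$, useful later).

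I do not expect a genuine analytic obstacle here; the only point needing a word of care is the meaning of ``smooth solution'': since $f$ and its partial derivatives are real-valued only on the open orthant $(0,\infty)^N$, equation \eqref{1.chem} is meaningful precisely where $\rho_i/\rho>0$, which is what legitimizes the inversion, and the content of the lemma is that the volume constraint \eqref{1.incom} then pins down $\rho$ — and hence each $\rho_i$ — to be strictly positive through the explicit formula above. An alternative proof via a minimum principle for the $\rho_i$-equation is possible after rewriting the flux \eqref{1.flux} in terms of $\na\rho_i$, $\na\rho$ and $\na p$, but the convective and pressure couplings make it considerably more cumbersome, so I would present the algebraic inversion.
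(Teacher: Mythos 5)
Your proposal is correct and follows essentially the same route as the paper: invert \eqref{1.chem} with the logarithmic energy to get $\rho_i/\rho=\exp(\mu_i-V_ip)$, multiply by $V_i$, sum, and use \eqref{1.incom} to conclude $1/\rho=\sum_{i=1}^N V_i\exp(\mu_i-V_ip)>0$, hence $\rho>0$ and $\rho_i>0$. The explicit computation of $\pa f/\pa\rho_i$ and the remarks on boundedness are harmless additions that do not change the argument.
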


\begin{proof}
We deduce from \eqref{1.mu} and the explicit expression of $f$ that $\rho_i/\rho=\exp(\mu_i-V_ip)$. Then, multiplying this expression by $V_i$, summing over $i=1,\ldots,N$, and using the constraint \eqref{1.incom}, we obtain $1/\rho = \sum_{i=1}^N V_i\rho_i/\rho = \sum_{i=1}^N V_i\exp(\mu_i-V_ip)>0$. This implies that $\rho>0$ and consequently, $\rho_i= \rho\exp(\mu_i-V_ip)>0$.
\end{proof}

\begin{lemma}[Energy identity]\label{lem.energy}
It holds for smooth solutions to \eqref{1.mass}--\eqref{1.flux} that
\begin{align*}
  \frac{\dd}{\dd t}\int_\Omega E(\vec\rho,\bm{u})\dd x
  + \int_\Omega\bigg(\mathbb{S}(\vec\rho,\na\bm{u}):\na\bm{u}
  + \sum_{i,j=1}^N M_{ij}(\vec\rho)\na\mu_i\cdot\na\mu_j\bigg)\dd x = 0,
\end{align*}
where the energy density $E$ is defined in \eqref{1.E}.
\end{lemma}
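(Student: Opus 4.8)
The strategy is the standard energy method: test the mass equations with the chemical potentials $\mu_i$, test the momentum equation with $\bm{u}$, add the results, and exploit the quasi-incompressibility constraint and the Gibbs--Duhem relation to cancel the pressure contributions. First I would compute $\frac{\dd}{\dd t}\int_\Omega f(\vec\rho)\dd x = \sum_{i=1}^N\int_\Omega \frac{\pa f}{\pa\rho_i}\pa_t\rho_i\dd x$ and substitute $\pa_t\rho_i = -\divM(\rho_i\bm{u}) - \divM\bm{J}_i$ from \eqref{1.mass}. Integration by parts (no boundary terms, by periodicity (A0)) gives $\int_\Omega \na\frac{\pa f}{\pa\rho_i}\cdot(\rho_i\bm{u} + \bm{J}_i)\dd x$. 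Summing over $i$, the convective part is $\int_\Omega\bm{u}\cdot\sum_i\rho_i\na\frac{\pa f}{\pa\rho_i}\dd x$, which vanishes by the homogeneity identity \eqref{2.one}. The diffusive part is $\sum_i\int_\Omega\na\frac{\pa f}{\pa\rho_i}\cdot\bm{J}_i\dd x$; writing $\frac{\pa f}{\pa\rho_i} = \mu_i - V_ip$ from \eqref{1.mu} and using $\sum_i V_i\bm{J}_i = -\divM\bm{u}$ (pointwise, from \eqref{1.VJ}) plus $\bm{J}_i = -\sum_j M_{ij}\na\mu_j$, this splits into $-\sum_{i,j}\int_\Omega M_{ij}\na\mu_i\cdot\na\mu_j\dd x$ together with a pressure term $\int_\Omega p\,\na\cdot\big(\sum_i V_i\bm{J}_i\big)\dd x = -\int_\Omega p\,\divM\bm{u}\dd x$ after another integration by parts (or, more directly, $\sum_i V_ip\,\divM\bm{J}_i$ integrated by parts).

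Next I would handle the kinetic energy. Testing \eqref{1.mom} with $\bm{u}$ and integrating by parts gives $\int_\Omega\pa_t(\rho\bm{u})\cdot\bm{u}\dd x + \int_\Omega\big(\rho\bm{u}\otimes\bm{u} - \mathbb{S} + p\mathbb{I}\big):\na\bm{u}\dd x = 0$. For the time-derivative and convective terms I would use the total mass conservation $\pa_t\rho + \divM(\rho\bm{u}) = 0$ (which follows from $\sum_i\bm{J}_i=0$, i.e.\ (A1)) to obtain the standard identity $\int_\Omega\big(\pa_t(\rho\bm{u})\cdot\bm{u} + (\rho\bm{u}\otimes\bm{u}):\na\bm{u}\big)\dd x = \frac{\dd}{\dd t}\int_\Omega\frac{\rho}{2}|\bm{u}|^2\dd x$; concretely, $\pa_t(\rho\bm{u})\cdot\bm{u} = \pa_t(\frac{\rho}{2}|\bm{u}|^2) + \frac{\rho}{2}\pa_t(|\bm{u}|^2)\cdot\frac12$ — more cleanly, expand $\pa_t(\rho\bm{u})\cdot\bm{u} + \divM(\rho\bm{u}\otimes\bm{u})\cdot\bm{u}$ and use the mass balance to see it equals $\pa_t(\frac{\rho}{2}|\bm{u}|^2) + \divM(\frac{\rho}{2}|\bm{u}|^2\bm{u})$, whose integral is $\frac{\dd}{\dd t}\int_\Omega\frac{\rho}{2}|\bm{u}|^2\dd x$. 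The remaining terms give $-\int_\Omega\mathbb{S}:\na\bm{u}\dd x + \int_\Omega p\,\divM\bm{u}\dd x$.

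Finally I would add the internal-energy and kinetic-energy identities. The two pressure contributions — $-\int_\Omega p\,\divM\bm{u}\dd x$ from the internal energy and $+\int_\Omega p\,\divM\bm{u}\dd x$ from the kinetic energy — cancel exactly. What remains is $\frac{\dd}{\dd t}\int_\Omega E(\vec\rho,\bm{u})\dd x = -\int_\Omega\mathbb{S}:\na\bm{u}\dd x - \sum_{i,j}\int_\Omega M_{ij}\na\mu_i\cdot\na\mu_j\dd x$, which is the claimed identity. The main bookkeeping obstacle is the correct tracking of the pressure terms: one must be careful that the constraint used is \eqref{1.VJ} in its pointwise (not merely integrated) form $\divM\bm{u} + \divM(\sum_i V_i\bm{J}_i)=0$, and that the Gibbs--Duhem structure \eqref{1.chem}/\eqref{1.incom} is precisely what makes the pressure enter only through $p\,\divM\bm{u}$ on both sides with opposite signs. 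No regularity issues arise beyond smoothness, since periodicity kills all boundary terms.
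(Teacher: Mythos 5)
Your plan is correct and follows essentially the same route as the paper: kill the convective internal-energy term via the homogeneity identity \eqref{2.one}, split $\pa f/\pa\rho_i=\mu_i-V_ip$ to produce the quadratic dissipation $\sum_{i,j}M_{ij}\na\mu_i\cdot\na\mu_j$ plus a pressure term, and cancel the latter against the $p\,\divM\bm{u}$ contribution from the momentum equation using \eqref{1.VJ}/\eqref{1.divMu}; the only difference is organizational (you treat kinetic and internal energies separately and add, the paper differentiates $E$ as a whole). The one blemish is a sign slip in your intermediate kinetic-energy displays, where $\int_\Omega\divM(\rho\bm{u}\otimes\bm{u}-\mathbb{S}+p\mathbb{I})\cdot\bm{u}\,\dd x$ should become $-\int_\Omega(\rho\bm{u}\otimes\bm{u}-\mathbb{S}+p\mathbb{I}):\na\bm{u}\,\dd x$ after integration by parts, but your subsequent ``cleaner'' expansion and the final stated balance $\frac{\dd}{\dd t}\int_\Omega\frac{\rho}{2}|\bm{u}|^2\dd x=-\int_\Omega\mathbb{S}:\na\bm{u}\,\dd x+\int_\Omega p\,\divM\bm{u}\,\dd x$ are correct, so the argument goes through.
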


\begin{proof}
We compute:
\begin{align*}
  \frac{\dd}{\dd t}\int_\Omega E(\vec\rho,\bm{u})\dd x
  = \int_\Omega\bigg(\frac12\pa_t\rho|\bm{u}|^2
  + \rho\bm{u}\cdot\pa_t\bm{u}
  + \sum_{i=1}^N\frac{\pa f}{\pa\rho_i}\pa_t\rho_i\bigg)\dd x.
\end{align*}
It follows from $\sum_{i=1}^N \bm{J}_i=0$ that $\pa_t\rho=-\divM(\rho\bm{u})$ and
\begin{align*}
  \rho\pa_t\bm{u} = \pa_t(\rho\bm{u}) - \bm{u}\pa_t\rho
  = -\divM(\rho\bm{u}\otimes\bm{u} - \mathbb{S}(\vec\rho,\na\bm{u}) + p\mathbb{I})
  + \bm{u}\divM(\rho\bm{u}).
\end{align*}
Inserting the mass and momentum balance equations and integrating by parts, this shows that
\begin{align*}
  \frac{\dd}{\dd t}\int_\Omega E(\vec\rho,\bm{u})\dd x
  &= \int_\Omega\bigg(-\frac12|\bm{u}|^2\divM(\rho\bm{u})
  - \divM(\rho\bm{u}\otimes\bm{u} - \mathbb{S}(\vec\rho,\na\bm{u}) + p\mathbb{I})\bm{u}  
  + |\bm{u}|^2\divM(\rho\bm{u}) \\
  &\phantom{xx}- \sum_{i=1}^N\frac{\pa f}{\pa\rho_i}
  \divM(\rho_i\bm{u}+\bm{J}_i)\bigg)\dd x \nonumber \\
  &= \int_\Omega\bigg(\rho\bm{u}\cdot\na\bm{u}\cdot\bm{u}
  + (\rho\bm{u}\otimes\bm{u} - \mathbb{S}(\vec\rho,\na\bm{u}) + p\mathbb{I}):\na\bm{u}
  - 2\rho\bm{u}\cdot\na\bm{u}\cdot\bm{u} 
  \nonumber \\
  &\phantom{xx}+ \sum_{i=1}^N(\rho_i\bm{u}+\bm{J}_i)
  \cdot\na\frac{\pa f}{\pa\rho_i}\bigg)\dd x \nonumber \\
  &= \int_\Omega\bigg(-\mathbb{S}(\vec\rho,\na\bm{u}):\na\bm{u} + p\divM\bm{u}
  + \sum_{i=1}^N\rho_i\na\frac{\pa f}{\pa\rho_i}\cdot\bm{u}
  + \sum_{i=1}^n\bm{J}_i\cdot\na\frac{\pa f}{\pa\rho_i}\bigg)\dd x.
  \nonumber 
\end{align*}
It follows from \eqref{2.one} that the third term on the right-hand side vanishes. Inserting the definition of $\bm{J}_i$ and observing that $\pa f/\pa\rho_i=\mu_i-pV_i$, we find that
\begin{align*}
  \frac{\dd}{\dd t}\int_\Omega E(\vec\rho,\bm u)\dd x
  = \int_\Omega\bigg(-\mathbb{S}(\vec\rho,\na\bm{u}):\na\bm{u} + p\divM\bm{u}
  - \sum_{i,j=1}^N M_{ij}\na\mu_j\cdot\na\mu_i
  - \sum_{i=1}^N V_i\bm{J}_i\cdot\na p\bigg)\dd x,
\end{align*}
and by \eqref{1.divMu}, the second and fourth terms on the right-hand side cancel. \qed
\end{proof}

For the following result, we introduce the skew-symmetric bilinear form
\begin{align*}
  (\bm{v},\bm{w})\mapsto \bm{b}_{\rm skw}(\rho\bm{u};\bm{v},\bm{w}) 
  = \frac12\big(\langle(\rho\bm{u}\cdot\na)\bm{v},\bm{w}\rangle
  - \langle(\rho\bm{u}\cdot\na)\bm{w},\bm{v}\rangle\big).
\end{align*}

To motivate our structure-preserving numerical method, we first derive a suitable weak formulation of \eqref{1.mass}--\eqref{1.flux}, \eqref{1.chem}--\eqref{1.incom}, such that the relevant structures as conservation of partial masses and the energy dissipation can be achieved by using suitable test functions. For instance, to derive the contribution of the kinetic energy from the standard conservative variables, we need to test the momentum equation with $\bm{u}$ and the total mass equation with $\frac{1}{2}|\bm{u}|^2$. The approximate velocity is typically contained in an approximation space, while $\frac{1}{2}|\bm{u}|^2$ is usually not in the approximation space for the densities.

\begin{lemma}[Variational formulation]
Smooth solutions $(\vec\rho,\vec\mu,\bm{u},p)$ to \eqref{1.mass}--\eqref{1.flux}, \eqref{1.chem}--\eqref{1.incom} satisfy the following variational equations:
\begin{align}
  & \langle\pa_t\rho_i,\psi_i\rangle 
  - \langle\rho_i\bm{u},\psi_i\rangle
  + \sum_{j=1}^N\big\langle M_{ij}(\vec\rho)\na\mu_j,\na\psi_i
  \big\rangle = 0, \label{2.var1} \\
  & \langle\mu_i,\xi_i\rangle - \bigg\langle
  \frac{\pa f}{\pa\rho_i}(\vec\rho) + V_i p,\xi_i\bigg\rangle = 0, 
  \quad i=1,\ldots,N, \label{2.var2} \\
  & \frac12\langle \bm{u}\pa_t\rho,\bm{v}\rangle
  + \langle\rho\pa_t\bm{u},\bm{v}\rangle
  - \bm{b}_{\rm skw}(\rho\bm{u};\bm{u},\bm{v}) \label{2.var3} \\
  &\phantom{xx}- \langle p,\divM\bm{v}\rangle
  + \langle\mathbb{S}((\vec\rho,\na\bm{u})),\na\bm{v}\rangle 
  + \sum_{i=1}^N\langle\rho_i\na(\mu_i-V_ip),\bm{v}\big\rangle = 0, 
  \nonumber \\
  & \langle\divM\bm{u},q\rangle = -\sum_{i,j=1}^N\big\langle 
  V_iM_{ij}(\vec\rho)\na\mu_j,\na q\big\rangle \label{2.var4}
\end{align}
for all smooth test functions $(\psi_i,\xi_i,\bm{v},q)$, $i=1,\ldots,N$.
\end{lemma}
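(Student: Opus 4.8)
\medskip
\noindent\textbf{Proof proposal.} The plan is to test each of the strong equations \eqref{1.mass}--\eqref{1.flux}, \eqref{1.chem}--\eqref{1.incom} against the paired test function and integrate by parts, using periodicity (A0) so that no boundary terms are produced. Three of the four identities are essentially immediate. For \eqref{2.var1} I would multiply the mass balance \eqref{1.mass} by $\psi_i$, integrate over $\Omega$, and move one derivative onto $\psi_i$ in both $\divM(\rho_i\bm u)$ and $\divM\bm J_i$; inserting the flux law \eqref{1.flux} then turns $-\langle\divM\bm J_i,\psi_i\rangle$ into $\sum_j\langle M_{ij}(\vec\rho)\na\mu_j,\na\psi_i\rangle$. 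Identity \eqref{2.var2} is just the $L^2$-pairing of the definition \eqref{1.chem} of $\mu_i$ with $\xi_i$. For \eqref{2.var4} I would pair the quasi-incompressibility relation \eqref{1.divMu} (equivalently the line \eqref{1.VJ}) with $q$ and integrate by parts once, recalling from \eqref{1.flux} that $\sum_i V_i\bm J_i=-\sum_{i,j}V_iM_{ij}(\vec\rho)\na\mu_j$.

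The only computation needing care is the momentum identity \eqref{2.var3}. I would test \eqref{1.mom} with $\bm v$, split $\langle\pa_t(\rho\bm u),\bm v\rangle=\langle\rho\pa_t\bm u,\bm v\rangle+\langle\bm u\,\pa_t\rho,\bm v\rangle$, and decompose the convective term as $\langle\divM(\rho\bm u\otimes\bm u),\bm v\rangle=\langle(\divM(\rho\bm u))\bm u,\bm v\rangle+\langle(\rho\bm u\cdot\na)\bm u,\bm v\rangle$. Summing \eqref{1.mass} over $i$ with $\sum_i\bm J_i=0$ gives the total mass conservation $\pa_t\rho=-\divM(\rho\bm u)$, which lets me rewrite $\langle(\divM(\rho\bm u))\bm u,\bm v\rangle=-\langle\bm u\,\pa_t\rho,\bm v\rangle$, and one integration by parts yields the antisymmetrisation identity
\begin{align*}
  \langle(\rho\bm u\cdot\na)\bm u,\bm v\rangle
  = \bm{b}_{\rm skw}(\rho\bm u;\bm u,\bm v) + \tfrac12\langle\bm u\,\pa_t\rho,\bm v\rangle .
\end{align*}
Collecting the three $\pa_t\rho$-contributions then leaves precisely the single term $\tfrac12\langle\bm u\,\pa_t\rho,\bm v\rangle$ of \eqref{2.var3}, together with the skew-symmetric form $\bm b_{\rm skw}(\rho\bm u;\cdot,\cdot)$ (up to its antisymmetry $\bm b_{\rm skw}(\rho\bm u;\bm u,\bm v)=-\bm b_{\rm skw}(\rho\bm u;\bm v,\bm u)$). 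Integrating by parts in the viscous and pressure terms produces $\langle\mathbb S(\vec\rho,\na\bm u),\na\bm v\rangle$ and $-\langle p,\divM\bm v\rangle$. Finally it remains to account for the term $\sum_i\langle\rho_i\na(\mu_i-V_ip),\bm v\rangle$: by \eqref{1.chem} its integrand equals $\sum_i\rho_i\na(\pa f/\pa\rho_i)$, which vanishes identically by the Euler homogeneity identity \eqref{2.one}. Hence it may be inserted without changing anything; although vacuous at the continuous level, this is exactly the rewriting needed for the discrete scheme, where the analogue of \eqref{2.one} fails and it is precisely this term, paired against $\bm u$ jointly with \eqref{2.var1} tested by $\mu_i-V_ip$, that will regenerate the internal-energy balance announced before the lemma.

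The main (and essentially only) obstacle is the bookkeeping in \eqref{2.var3}: three distinct $\pa_t\rho$-terms are generated -- one from $\pa_t(\rho\bm u)$, one from $\divM(\rho\bm u\otimes\bm u)$ via mass conservation, and one from the antisymmetrisation of the convective form -- and one must check that they collapse to the coefficient $\tfrac12$, matching the kinetic-energy splitting described before the statement. Everything else reduces to a single integration by parts per term, legitimate because (A0) kills boundary contributions, together with the constitutive relations \eqref{1.flux}, \eqref{1.chem} and the identity \eqref{2.one}.
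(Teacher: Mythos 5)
Your proposal is correct and follows essentially the same route as the paper: test each strong equation against its paired test function, integrate by parts using periodicity, and, for the momentum identity, split $\pa_t(\rho\bm u)$ and the convective term, absorb the $\divM(\rho\bm u)$ contributions via total mass conservation, and insert the identically vanishing quantity $\sum_{i}\langle\rho_i\na(\mu_i-V_ip),\bm v\rangle=\sum_i\langle\rho_i\na(\pa f/\pa\rho_i),\bm v\rangle=0$ supplied by \eqref{2.one} (the paper reaches the same term by adding $\sum_i(\langle\rho_i\na\mu_i,\bm v\rangle+\langle\mu_i,\divM(\rho_i\bm v)\rangle)=0$, which is equivalent). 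One remark on the convective bookkeeping, which is the only place where you and the paper diverge: your antisymmetrisation identity is the correct one, and it delivers $+\bm{b}_{\rm skw}(\rho\bm u;\bm u,\bm v)$, in agreement with the discrete scheme \eqref{num.mom}, whereas \eqref{2.var3} as printed carries $-\bm{b}_{\rm skw}(\rho\bm u;\bm u,\bm v)$; the paper's own derivation arrives at the minus sign through a slip in the integration by parts of $\langle\divM(\rho\bm u\otimes\bm u),\bm v\rangle$, which equals $-\langle(\rho\bm u\cdot\na)\bm v,\bm u\rangle$ rather than $-\langle(\rho\bm u\cdot\na)\bm u,\bm v\rangle$. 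Your parenthetical appeal to antisymmetry does not close this gap, since $\bm{b}_{\rm skw}(\rho\bm u;\bm u,\bm v)$ and $-\bm{b}_{\rm skw}(\rho\bm u;\bm u,\bm v)$ differ unless they vanish; the discrepancy is harmless for the energy identity (there $\bm v=\bm u$ and the form is zero), but you should either state the lemma with the plus sign or flag the inconsistency explicitly. A similar cosmetic point: your derivation correctly produces $-\langle\rho_i\bm u,\na\psi_i\rangle$ in \eqref{2.var1}, where the printed statement omits the gradient on $\psi_i$.
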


\begin{proof}
Equations \eqref{2.var1}, \eqref{2.var2}, and \eqref{2.var4} follow directly from \eqref{1.mass2}, \eqref{1.mom2}, and \eqref{1.divMu}, respectively, after multiplication with a test function and integration by parts. We only need to show that \eqref{2.var3} follows from the momentum equation \eqref{1.mom2}. To this end, we multiply \eqref{1.mom2} by a test function $\bm{v}$ and integrate by parts:
\begin{align*}
  0 = \langle\pa_t(\rho\bm{u}),\bm{v}\rangle
  + \big\langle\divM(\rho\bm{u}\otimes\bm{u}),\bm{v}\big\rangle
  + \langle\mathbb{S}((\vec\rho,\na\bm{u})),\na\bm{v}\rangle
  - \langle p,\divM\bm{v}\rangle.
\end{align*}
Using the mass balance equation $\pa_t\rho=-\divM(\rho\bm{u})$ and integrating by parts, the first two terms can be written as
\begin{align*}
  \langle\pa_t&(\rho\bm{u}),\bm{v}\rangle
  + \big\langle\divM(\rho\bm{u}\otimes\bm{u}),\bm{v}\big\rangle
  = \langle\bm{u}\pa_t\rho,\bm{v}\rangle
  + \langle\rho\pa_t\bm{u},\bm{v}\rangle
  + \big\langle\divM(\rho\bm{u}\otimes\bm{u}),\bm{v}\big\rangle \\
  &= \frac12\langle\bm{u}\pa_t\rho,\bm{v}\rangle
  - \frac12\langle\bm{u}\divM(\rho\bm{u}),\bm{v}\rangle
  + \langle\rho\pa_t\bm{u},\bm{v}\rangle
  + \big\langle\divM(\rho\bm{u}\otimes\bm{u}),\bm{v}\big\rangle \\
  &= \frac12\langle\bm{u}\pa_t\rho,\bm{v}\rangle
  + \frac12\big(\langle\rho(\bm{u}\cdot\na)\bm{u},\bm{v}\rangle
  + \langle\rho(\bm{u}\cdot\na)\bm{v},\bm{u}\rangle\big)
  + \langle\rho\pa_t\bm{u},\bm{v}\rangle
  - \langle\rho(\bm{u}\cdot\na)\bm{u},\bm{v}\rangle \\
  &= \frac12\langle\bm{u}\pa_t\rho,\bm{v}\rangle
  + \langle\rho\pa_t\bm{u},\bm{v}\rangle
  - \frac12\big(\langle\rho(\bm{u}\cdot\na)\bm{u},\bm{v}\rangle
  - \langle\rho(\bm{u}\cdot\na)\bm{v},\bm{u}\rangle\big) \\
  &= \frac12\langle\bm{u}\pa_t\rho,\bm{v}\rangle
  + \langle\rho\pa_t\bm{u},\bm{v}\rangle 
  - \bm{b}_{\rm skw}(\rho\bm{u};\bm{u},\bm{v}).
\end{align*}
Adding the equation
\begin{align*}
  0 &= \sum_{i=1}^N\big(\langle\rho_i\na\mu_i,\bm{v}\rangle
  + \langle\mu_i,\divM(\rho_i\bm{v})\rangle\big) \\
  &= \sum_{i=1}^N\bigg(\langle\rho_i\na\mu_i,\bm{v}\rangle
  + \bigg\langle\frac{\pa f}{\pa\rho_i}+V_ip,
  \divM(\rho_i\bm{v})\bigg\rangle\bigg) \\
  &= \sum_{i=1}^N\bigg(\langle\rho_i\na\mu_i,\bm{v}\rangle
  - \bigg\langle\rho_i\na\frac{\pa f}{\pa\rho_i},\bm{v}\bigg\rangle
  - \langle\na p,V_i\rho_i\bm{v}\rangle\bigg) \\
  &= \sum_{i=1}^N\langle\rho_i\na(\mu_i-V_ip),\bm{v}\rangle
\end{align*}
yields our claim \eqref{2.var3}. \qed
\end{proof}

In the following section, we suggest a discrete finite element version of the previous variational formulation.

%%%%%%%%%%%%%%%%%%%%%%%%%%%%%%%%%%%%%%%%%%%%%%%%%%%%%%%%%%%%%%%

\section{Numerical analysis}\label{sec.num}

\subsection{Numerical scheme}

We partition the time interval $[0,T]$ uniformly into $n$ subintervals $[t_{k-1},t_k]$, where $t_k=k\tau$, $k=1,\ldots,n$, and $\tau=T/n$ is the time step size. We introduce 
\begin{align*}
  \Pi_\tau^0(\mathcal{U}) 
  &= \mbox{space of discontinuous piecewise constant functions on }
  \{t_0,\ldots,t_n\} \\
  &\phantom{xx} \mbox{with values in }\mathcal{U}, \\
  \Pi_\tau^1(\mathcal{U})  
  &= \mbox{space of continuous piecewise linear functions on }
  \{t_0,\ldots,t_n\}\mbox{ with values in }\mathcal{U}.
\end{align*}
For the spatial discretization, we require that $\mathcal{T}_h$ is a geometrically conforming partition of $\Omega$ into simplices that can be extended periodically to periodic extensions of $\Omega$, with $h>0$ being the maximal diameter of the simplices. In particular, the boundary of $\Omega$ coincides with the boundary of the triangulation. Let $P_k(K)$ be the space of polynomials with maximal degree $k\in\N$ on $K\in\mathcal{T}_h$. We define the spaces of piecewise linear and quadratic functions by, respectively,
\begin{align*}
  \mathcal{V}_h &= \big\{v\in H^1(\Omega)\cap C^0(\overline\Omega):
  v|_K\in P_1(K)\mbox{ for all }K\in\mathcal{T}_h\big\}, \\
  \mathcal{X}_h &= \big\{v\in H^1(\Omega)\cap C^0(\overline\Omega):
  v|_K\in P_2(K)\mbox{ for all }K\in\mathcal{T}_h\big\},
\end{align*}
and the spaces of mean-free and positive functions, respectively,
\begin{align*}
  \mathcal{Q}_h = \{v\in\mathcal{V}_h:\langle v,1\rangle=0\}, \quad
  \mathcal{V}_{h,+} = \{v\in\mathcal{V}_h:v(x)>0
  \mbox{ for }x\in\Omega\},
\end{align*}
recalling that $\langle u,v\rangle = \sum_{K\in\mathcal{T}_h} u|_K v|_K$. 

Let the initial data $({\pvec{\rho}}_{h}^{0},\bm{u}_h^0) \in\mathcal{V}_{h}^N\times\mathcal{X}_h^d$ be given. The numerical scheme reads as follows: Find 
$(\pvec{\rho}_{h}^{k}, \bm{u}_h^{k})\in \Pi_\tau^1(\mathcal{V}_{h}^N\times\mathcal{X}_h^d)$ and 
$(\vec\mu_{h}^k, p_h^k)\in \Pi_\tau^0(\mathcal{V}_h^N\times\mathcal{Q}_h)$
for $k=0,\ldots,n-1$ such that for $i=1,\ldots,N$,
\begin{align}
  & \frac{1}{\tau}\langle \rho_{h,i}^{k+1}-\rho_{h,i}^{k},\psi_i\rangle
  - \langle\rho_{h,i}^k\bm{u}_h^{k+1},\na\psi_i\rangle
  + \sum_{j=1}^N\big\langle M_{ij}(\pvec{\rho}_{h}^{*})\na\mu_{h,j}^{k+1},
  \na\psi_i\big\rangle = 0, \label{num.mass} \\
  & \langle\mu_{h,i}^{k+1},\xi_i\rangle 
  - \bigg\langle\frac{\pa f}{\pa\rho_i}(\pvec{\rho}_h^{k+1})
  + V_ip_h^{k+1},\xi_i\bigg\rangle = 0, \label{num.mu} \\
  & \frac{1}{2\tau}\big\langle\bm{u}_h^{k+1}
  (\rho_h^{k+1}-\rho_h^k),\bm{v}\big\rangle 
  + \frac{1}{\tau}\big\langle\rho_h^k
  (\bm{u}_h^{k+1}-\bm{u}_h^k),\bm{v}\big\rangle
  + \bm{b}_{\rm skw}(\rho_h^*\bm{u}_h^*;\bm{u}_h^{k+1},\bm{v}) 
  \label{num.mom} \\
  &\phantom{xx}- \langle p_h^{k+1},\divM\bm{v}\rangle
  + \big\langle\mathbb{S}(\pvec{\rho}_{h}^{*},
  \na\bm{u}_h^{k+1}),\na\bm{v}\big\rangle 
  + \sum_{i=1}^N\big\langle\rho_{h,i}^k
  \na(\mu_{h,i}^{k+1}-V_ip_h^{k+1}), 
  \bm{v}\big\rangle = 0, \nonumber \\
  &\langle\divM\bm{u}_h^{k+1},q\rangle
  = -\sum_{i,j=1}^N\big\langle V_iM_{ij}(\pvec{\rho}_{h}^{*})
  \na\mu_{h,j}^{k+1},\na q\big\rangle \label{num.divMu}
\end{align}
for all $(\psi_i,\xi_i,\bm{v},q)\in\mathcal{V}_h^N\times\mathcal{V}_h^N
\times\mathcal{X}_h^d\times\mathcal{Q}_h$. Here, $\rho_h^*$ and $\bm{u}_h^*$ are functions of $(\rho_h^{k},\rho_h^{k+1})$ and $(\bm{u}_h^k,\bm{u}_h^{k+1})$, respectively. Examples are the explicit scheme $(\rho_h^*,\bm{u}_h^*)=(\rho_h^k,\bm{u}_h^k)$, the implicit scheme $(\rho_h^*,\bm{u}_h^*)=(\rho_h^{k+1},\bm{u}_h^{k+1})$, or the mid-point rule. In the numerical experiments, we have chosen the explicit approximation. The discrete pressure terms in \eqref{num.mom} encode the incompressibility condition \eqref{1.incom}. In fact, if \eqref{1.incom} holds, these terms cancel out. This means that if the discrete solution satisfies the constraint \eqref{1.incom}, the discrete pressure does not appear in \eqref{num.mom} directly, but only implicitly via the chemical potentials.

We mentioned already in the introduction that we assume the existence of a discrete solution to \eqref{num.mass}--\eqref{num.divMu}, since a proof is far from being trivial. In particular, we expect that $\rho_{h,i}^k$ is nonnegative for $i=1,\ldots,N$ and $k=1,\ldots,n$. This is consistent with the continuous model; see Lemma \ref{lem.pos}. Hence, we expect that the positivity of the discrete partial densities is a consequence of the singular behavior of the free energy density or its derivatives.

\subsection{Properties of the numerical scheme}

The following numerical properties hold, given that the discrete solutions are positive.

\begin{theorem}[Discrete identities]\label{thm.id}
Let Assumptions (A0) -- (A4) hold. Every discrete solution $(\pvec{\rho}_{h}^{k}, \bm{u}_h^{k})\in \Pi_\tau^1(\mathcal{V}_{h,+}^N \times\mathcal{X}_h^d)$, $(\vec\mu_{h}^k, p_h^k)\in \Pi_\tau^0(\mathcal{V}_h^N\times\mathcal{Q}_h)$ to \eqref{num.mass}--\eqref{num.divMu} satisfies the following identities for $k\in \{0,\ldots,n-1\}$.
\begin{itemize}
\item Partial mass conservation: For all $i=1,\ldots,N$,
\begin{align*}
  \langle\rho_{h,i}^{k+1},\mathrm{1}\rangle
  = \langle\rho_{h,i}^{k},\mathrm{1}\rangle.
\end{align*}
\item Total mass conservation equation: For all $q\in\mathcal{V}_h$,
\begin{align*}
  \frac{1}{\tau}\langle\rho_h^{k+1}-\rho_h^k,q\rangle
  - \langle\rho_h^k\bm{u}_h^{k+1},\na q\rangle = 0.
\end{align*}
\item Constraint: For all $q\in\mathcal{Q}_h$,
\begin{align}\label{num.varcon}
  \frac{1}{\tau}\bigg\langle\sum_{i=1}^N V_i\rho_{h,i}^{k+1} - 1,q
  \bigg\rangle - \frac{1}{\tau}\bigg\langle\sum_{i=1}^N V_i\rho_{h,i}^{k} - 1,q\bigg\rangle
  = \bigg\langle\bigg(\sum_{i=1}^N V_i\rho_{h,i}^{k} - 1\bigg)
  \bm{u}_h^{k+1},\na q\bigg\rangle.
\end{align}
\item Discrete energy equality:
\begin{align*}
  \frac{1}{\tau}&\langle E_h^{k+1}-E_h^k,\mathrm{1}\rangle \\
  &= -\big\langle\mathbb{S}(\pvec{\rho}_{h}^{*},\na\bm{u}_h^{k+1}),\na\bm{u}_h^{k+1}
  \big\rangle - \sum_{i,j=1}^N\big\langle M_{ij}(\pvec{\rho}_{h}^{*})
  \na\mu_{h,i}^{k+1},\na\mu_{h,j}^{k+1}\big\rangle 
  - \mathcal{D}_{\rm num} \le 0,
\end{align*}
where the discrete energy density and numerical dissipation are given by
\begin{align*}
  E_h^k &= \frac12\rho_h^k|\bm{u}_h^k|^2 + f(\pvec{\rho}_{h}^{k}), \\
  \mathcal{D}_{\rm num} &= \frac{1}{2\tau}\|(\rho_h^k)^{1/2}
  (\bm{u}_h^{k+1}-\bm{u}_h^k)\|_{L^2(\Omega)}^2
  + \frac{1}{\tau}\sum_{i,j=1}^N
  \bigg\langle\frac{\pa^2 f}{\pa\rho_i\pa\rho_j}
  (\vec\omega_h)(\rho_{h,i}^{k+1}-\rho_{h,i}^k),
  \rho_{h,j}^{k+1}-\rho_{h,j}^k\bigg\rangle,
\end{align*}
and $\vec\omega_h$ is a convex combination of $\pvec{\rho}_{h}^{k}$ and $\pvec{\rho}_{h}^{k+1}$.
\end{itemize} 
\end{theorem}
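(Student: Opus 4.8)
The first three identities come from substituting well-chosen test functions. For the partial masses, take $\psi_i=\mathrm{1}$ in \eqref{num.mass}: since $\na\mathrm{1}=0$, the convective and Maxwell--Stefan terms drop and only $\tfrac1\tau\langle\rho_{h,i}^{k+1}-\rho_{h,i}^{k},\mathrm{1}\rangle=0$ survives. For the total mass equation, sum \eqref{num.mass} over $i$ against a common test function $q\in\mathcal{V}_h$; the diffusive contribution $\sum_{i,j}\langle M_{ij}(\pvec{\rho}_{h}^{*})\na\mu_{h,j}^{k+1},\na q\rangle$ vanishes because $\sum_{i=1}^N M_{ij}=0$ (symmetry of $\mathbb{M}$ and $\mathrm{1}\in\ker(\mathbb{M})$ in (A1)), which leaves the stated identity for $\rho_h^k=\sum_i\rho_{h,i}^{k}$. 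For the constraint \eqref{num.varcon}, test \eqref{num.mass} with $\psi_i=V_iq$, $q\in\mathcal{Q}_h$, sum over $i$, and eliminate $\sum_{i,j}\langle V_iM_{ij}(\pvec{\rho}_{h}^{*})\na\mu_{h,j}^{k+1},\na q\rangle$ using \eqref{num.divMu} together with the periodic integration by parts $\langle\divM\bm{u}_h^{k+1},q\rangle=-\langle\bm{u}_h^{k+1},\na q\rangle$; rewriting $\sum_iV_i(\rho_{h,i}^{k+1}-\rho_{h,i}^{k})=\big(\sum_iV_i\rho_{h,i}^{k+1}-1\big)-\big(\sum_iV_i\rho_{h,i}^{k}-1\big)$ then gives \eqref{num.varcon}.

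The discrete energy equality is the substantial part, and it reproduces the proof of Lemma \ref{lem.energy} at the discrete level. The plan is to test the four equations of the scheme with their energy-conjugate quantities: \eqref{num.mom} with $\bm{v}=\bm{u}_h^{k+1}$, \eqref{num.mass} with $\psi_i=\mu_{h,i}^{k+1}$ (summed over $i$), \eqref{num.mu} with $\xi_i=\rho_{h,i}^{k+1}-\rho_{h,i}^{k}$, and \eqref{num.divMu} with $q=p_h^{k+1}$; all of these are admissible because $\bm{u}_h^{k+1}\in\mathcal{X}_h^d$, $\mu_{h,i}^{k+1}\in\mathcal{V}_h$, $\rho_{h,i}^{k+1}-\rho_{h,i}^{k}\in\mathcal{V}_h$, and $p_h^{k+1}\in\mathcal{Q}_h\subset\mathcal{V}_h$, which is exactly what the construction of the scheme was meant to allow. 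In the tested momentum equation, the convective term disappears by skew-symmetry, $\bm{b}_{\rm skw}(\rho_h^{*}\bm{u}_h^{*};\bm{u}_h^{k+1},\bm{u}_h^{k+1})=0$, and the two discrete time-derivative terms are reorganized via $2\bm{a}\cdot(\bm{a}-\bm{b})=|\bm{a}|^2-|\bm{b}|^2+|\bm{a}-\bm{b}|^2$ (with $\bm{a}=\bm{u}_h^{k+1}$, $\bm{b}=\bm{u}_h^{k}$), so that they collapse to $\tfrac1\tau\langle\tfrac12\rho_h^{k+1}|\bm{u}_h^{k+1}|^2-\tfrac12\rho_h^{k}|\bm{u}_h^{k}|^2,\mathrm{1}\rangle$ plus the kinetic defect $\tfrac1{2\tau}\|(\rho_h^k)^{1/2}(\bm{u}_h^{k+1}-\bm{u}_h^k)\|_{L^2(\Omega)}^2$. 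This is the step for which the particular splitting of the inertial term in \eqref{num.mom} was tailored, precisely so that $\tfrac12|\bm{u}|^2$ need not serve as a density test function.

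In the tested (and summed) mass equation, the convective term $-\sum_i\langle\rho_{h,i}^{k}\bm{u}_h^{k+1},\na\mu_{h,i}^{k+1}\rangle$ cancels the contribution $+\sum_i\langle\rho_{h,i}^{k}\na\mu_{h,i}^{k+1},\bm{u}_h^{k+1}\rangle$ from the momentum equation, and the diffusive part produces $\sum_{i,j}\langle M_{ij}(\pvec{\rho}_{h}^{*})\na\mu_{h,i}^{k+1},\na\mu_{h,j}^{k+1}\rangle$. It remains to treat $\tfrac1\tau\sum_i\langle\rho_{h,i}^{k+1}-\rho_{h,i}^{k},\mu_{h,i}^{k+1}\rangle$; substituting \eqref{num.mu} tested with $\xi_i=\rho_{h,i}^{k+1}-\rho_{h,i}^{k}$ splits it into a chemical part $\tfrac1\tau\sum_i\langle\tfrac{\pa f}{\pa\rho_i}(\pvec{\rho}_{h}^{k+1}),\rho_{h,i}^{k+1}-\rho_{h,i}^{k}\rangle$ and a pressure part $\tfrac1\tau\sum_iV_i\langle p_h^{k+1},\rho_{h,i}^{k+1}-\rho_{h,i}^{k}\rangle$. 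A second-order Taylor expansion of $f$ at an intermediate state $\vec\omega_h$ between $\pvec{\rho}_{h}^{k}$ and $\pvec{\rho}_{h}^{k+1}$ rewrites the chemical part as $\tfrac1\tau\langle f(\pvec{\rho}_{h}^{k+1})-f(\pvec{\rho}_{h}^{k}),\mathrm{1}\rangle$ plus the Hessian remainder appearing in $\mathcal{D}_{\rm num}$, which is nonnegative since positively homogeneous functions of degree one are convex (noted after \eqref{2.one}). Combined with the kinetic contributions, this already yields the left-hand side $\tfrac1\tau\langle E_h^{k+1}-E_h^{k},\mathrm{1}\rangle$ and the first two dissipation terms on the right, the viscous one nonnegative by (A3) and the Maxwell--Stefan one by (A1).

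The final point is the cancellation of the remaining pressure terms: $-\langle p_h^{k+1},\divM\bm{u}_h^{k+1}\rangle$ and $-\sum_iV_i\langle\rho_{h,i}^{k}\na p_h^{k+1},\bm{u}_h^{k+1}\rangle$ from the momentum equation, and $\tfrac1\tau\sum_iV_i\langle p_h^{k+1},\rho_{h,i}^{k+1}-\rho_{h,i}^{k}\rangle$ from the previous step. Testing \eqref{num.mass} with $\psi_i=V_ip_h^{k+1}$ (legitimate since $p_h^{k+1}\in\mathcal{V}_h$) and summing over $i$ shows that the last two of these equal $-\sum_{i,j}V_i\langle M_{ij}(\pvec{\rho}_{h}^{*})\na\mu_{h,j}^{k+1},\na p_h^{k+1}\rangle$, which by \eqref{num.divMu} tested with $q=p_h^{k+1}$ equals $\langle\divM\bm{u}_h^{k+1},p_h^{k+1}\rangle$, i.e.\ the negative of the first term; hence all three cancel, mirroring the continuous cancellation through \eqref{1.divMu}. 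Collecting the contributions gives the asserted energy equality, and $\tfrac1\tau\langle E_h^{k+1}-E_h^{k},\mathrm{1}\rangle\le 0$ follows because every summand of $\mathcal{D}_{\rm num}$ and each remaining dissipation integral is nonnegative. I expect no deep obstacle, since the existence of a discrete solution is assumed; the genuinely delicate points are the ``discrete chain rule'' for the variable-density kinetic energy $\tfrac\rho2|\bm{u}|^2$ and keeping precise track of which discrete objects are admissible test functions in which equation, while the rest is careful but routine bookkeeping.
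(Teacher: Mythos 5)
Your proposal is correct and follows essentially the same route as the paper: the same test functions ($\psi_i=\mathrm{1}$, $\psi_i=q$, $\psi_i=V_iq$ with \eqref{num.divMu}; then $\bm{v}=\bm{u}_h^{k+1}$, $\psi_i=\mu_{h,i}^{k+1}$, $\xi_i=(\rho_{h,i}^{k+1}-\rho_{h,i}^k)/\tau$, $q=p_h^{k+1}$, and $\psi_i=V_ip_h^{k+1}$ for the pressure cancellation), the same algebraic identity for the variable-density kinetic energy, and the same Taylor-expansion/convexity argument for the internal-energy defect. No gaps.
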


\begin{proof}
The conservation of the partial masses is obtained by choosing $\psi_i=1$ in \eqref{num.mass}. The sum of \eqref{num.mass} over $i=1,\ldots,N$ with $\psi_i=q$ and the fact that $\sum_{i=1}^N M_{ij}(\vec\rho_h)=0$ leads to the total mass conservation equation. The equation for the constraint follows from \eqref{num.mass} by choosing $\psi_i=V_iq\in\mathcal{V}_h$ for $q\in\mathcal{Q}_h$ (hence $\langle q,\mathrm{1}\rangle=0$), summing over $i=1,\ldots,N$, and using \eqref{num.divMu}:
\begin{align*}
  \frac{1}{\tau}\bigg\langle&\sum_{i=1}^N V_i\rho_{h,i}^{k+1}-1,q
  \bigg\rangle - \frac{1}{\tau}\bigg\langle\sum_{i=1}^N V_i\rho_{h,i}^{k}-1,q\bigg\rangle \\
  &= \bigg\langle\sum_{i=1}^N V_i\rho_{h,i}^k\bm{u}_h^{k+1},\na q
  \bigg\rangle + \bigg\langle\sum_{i=1}^N V_i\bm{J}_{h,i}^{k+1},
  \na q\bigg\rangle \\
  &= \bigg\langle\sum_{i=1}^N V_i\rho_{h,i}^k\bm{u}_h^{k+1},\na q
  \bigg\rangle + \langle\divM\bm{u}_h^{k+1},q\rangle 
  = \bigg\langle\bigg(\sum_{i=1}^N V_i\rho_{h,i}^k-1\bigg)
  \bm{u}_h^{k+1},\na q\bigg\rangle.
\end{align*}
For the energy identity, we choose the test function $\bm{v}=\bm{u}_h^{k+1}$ in \eqref{num.mom}:
\begin{align}
  \frac{1}{2\tau}&\big\langle \bm{u}_h^{k+1}(\rho_h^{k+1}-\rho_h^k),
  \bm{u}_h^{k+1}\big\rangle + \frac{1}{\tau}\big\langle
  \rho_h^k(\bm{u}_h^{k+1}-\bm{u}_h^k),\bm{u}_h^{k+1}\big\rangle
  + \bm{b}_{\rm skw}(\rho_h^*\bm{u}_h^*,\bm{u}_h^{k+1},\bm{u}_h^{k+1}) 
  \label{num.aux} \\
  &- \langle p_h^{n+1},\divM\bm{u}_h^{k+1} \rangle+\big\langle\mathbb{S}(\pvec{\rho}_{h}^{*},\na\bm{u}_h^{k+1}),
  \na\bm{u}_h^{k+1}\big\rangle 
  + \sum_{i=1}^N\langle\rho_{h,i}^k\na(\mu_{h,i}^{k+1} -V_ip_h^{k+1}) ,
  \bm{u}_h^{k+1}\rangle = 0. \nonumber 
\end{align}
Notice that $\bm{b}_{\rm skw}(\rho_h^*\bm{u}_h^*,\bm{u}_h^{k+1}, \bm{u}_h^{k+1}) = 0$. An elementary computation shows that
\begin{align*}
  \frac12(&\rho_h^{k+1}-\rho_h^k)|\bm{u}_h^{k+1}|^2
  + \rho_h^k(\bm{u}_h^{k+1}-\bm{u}_h^k)\cdot\bm{u}_h^{k+1} \\
  &= \frac12\rho_h^{k+1}|\bm{u}_h^{k+1}|^2 
  - \frac12\rho_h^k|\bm{u}_h^k|^2
  + \frac12\rho_h^k|\bm{u}_h^{k+1}-\bm{u}_h^k|^2.
\end{align*}
Thus, replacing the first two terms in \eqref{num.aux}, we have
\begin{align}\label{num.aux2}
  \frac{1}{2\tau}&\langle\rho_h^{k+1}|\bm{u}_h^{k+1}|^2,\mathrm{1}\rangle
  - \frac{1}{2\tau}\langle\rho_h^k|\bm{u}_h^k|^2,\mathrm{1}\rangle
  + \frac{1}{2\tau}\langle\rho_h^k|\bm{u}_h^{k+1}-\bm{u}_h^k|^2,\mathrm{1}\rangle
  - \langle p_h^{k+1},\divM\bm{u}_h^{k+1}\rangle \\
  &+ \big\langle\mathbb{S}(\pvec{\rho}_{h}^{*},\na\bm{u}_h^{k+1}),
  \na\bm{u}_h^{k+1}\big\rangle
  + \sum_{i=1}^N\big\langle\rho_{h,i}^k\na(\mu_{h,i}^{k+1}
  - V_ip_h^{k+1}),\bm{u}_h^{k+1}\big\rangle = 0. \nonumber 
\end{align}
To reformulate the fourth term, we take $q=p_h^{k+1}$ in \eqref{num.divMu} and then $\psi_i=V_ip_h^{k+1}$ in \eqref{num.mass}:
\begin{align*}
  \langle p_h^{k+1},\divM \bm{u}_h^{k+1} \rangle 
  &= - \sum_{i,j=1}^N \langle V_iM_{ij}(\pvec{\rho}_h^{*})
  \nabla\mu_{h,j}^{k+1},\nabla p_{h}^{k+1} \rangle \\
  &= \sum_{i=1}^N \frac{1}{\tau}\langle 
  \rho_{h,i}^{k+1}-\rho_{h,i}^k, V_ip_h^{k+1} \rangle 
  - \sum_{i=1}^N\langle \rho_{h,i}^k\bm{u}_h^{k+1},
  V_i\nabla p_h^{k+1}\rangle. 
\end{align*}
Combining \eqref{num.aux2} and the previous expression, some terms cancel:
\begin{align}\label{num.aux4}
  \frac{1}{2\tau}&\langle\rho_h^{k+1}|\bm{u}_h^{k+1}|^2,
  \mathrm{1}\rangle
  - \frac{1}{2\tau}\langle\rho_h^k|\bm{u}_h^k|^2,
  \mathrm{1}\rangle
  + \frac{1}{2\tau}\langle\rho_h^k|\bm{u}_h^{k+1}-\bm{u}_h^k|^2,
  \mathrm{1}\rangle \\
  &+ \big\langle\mathbb{S}(\pvec{\rho}_{h}^{*},\na\bm{u}_h^{k+1}),
  \na\bm{u}_h^{k+1}\big\rangle
  + \sum_{i=1}^N\big\langle\rho_{h,i}^k\na\mu_{h,i}^{k+1}, 
  \bm{u}_h^{k+1}\big\rangle 
  = \frac{1}{\tau}\sum_{i}^N\langle \rho_{h,i}^{k+1}-\rho_{h,i}^k, 
  V_ip_h^{k+1} \rangle. \nonumber 
\end{align}
For the internal energy contribution, we add and subtract some terms:
\begin{align}\label{num.aux5}
  \frac{1}{\tau}&\langle f(\pvec{\rho}_h^{k+1}),1\rangle - \frac{1}{\tau} f(\pvec{\rho}_h^{k}),1\rangle 
  = \frac{1}{\tau}\sum_{i=1}^N \bigg\langle \frac{\partial f(\pvec{\rho}_h^{k+1})}{\partial \rho_i},\rho_{h,i}^{k+1}-\rho_{h,i}^{k} \bigg\rangle \\
  &+ \frac{1}{\tau}\langle f(\pvec{\rho}_h^{k+1}),1\rangle - \frac{1}{\tau}\langle f(\pvec{\rho}_h^{k}),1\rangle - \frac{1}{\tau}\sum_{i=1}^N \bigg\langle \frac{\partial f(\pvec{\rho}_h^{k+1})}{\partial \rho_i},\rho_{h,i}^{k+1}-\rho_{h,i}^{k} \bigg\rangle. \nonumber 
\end{align}
The last line can be rewritten as part of the numerical dissipation $\mathcal{D}_{\rm num}$ using Taylor expansion.
Using the definition of the discrete energy $E_h$ and the numerical dissipation $\mathcal{D}_{\rm num}$, a combination of \eqref{num.aux4} and \eqref{num.aux5} gives
\begin{align}\label{num.aux7}
  \frac{1}{\tau}\langle E^{k+1}_h - E^{k}_h,1 \rangle 
  &= - \mathcal{D}_{\rm num} 
  - \big\langle\mathbb{S}(\pvec{\rho}_{h}^{*},\na\bm{u}_h^{k+1}),
  \na\bm{u}_h^{k+1}\big\rangle \\
  &\phantom{xx}+ \frac{1}{\tau}\sum_{i=1}^N \bigg\langle \rho_{h,i}^{k+1}-\rho_{h,i}^{k},\frac{\partial f(\pvec{\rho}_h^{k+1})}{\partial \rho_i} + V_ip_h^{k+1}
  \bigg\rangle
  - \sum_{i=1}^N\langle\rho_{h,i}^k\na \mu_{h,i}^{k+1},
  \bm{u}_h^{k+1}\rangle. \notag
\end{align}
We choose $\xi_i=(\rho_{h,i}^{k+1}-\rho_{h,i}^{k})/\tau$ in \eqref{num.mu} and then $\psi_i=\mu_{h,i}^{k+1}$ in \eqref{num.mass} to rewrite the third term on the right-hand side:
\begin{align*}
  \frac{1}{\tau}\sum_{i=1}^N & \bigg\langle \rho_{h,i}^{k+1}-\rho_{h,i}^{k},\frac{\partial f(\pvec{\rho}_h^{k+1})}{\partial \rho_i} + V_ip_h^{k+1}\bigg\rangle 
  = \frac{1}{\tau}\sum_{i=1}^N \langle \rho_{h,i}^{k+1}-\rho_{h,i}^{k},
  \mu_{h,i}^{k+1} \rangle \\
  &= \sum_{i=1}^N \langle \rho_{h,i}^k \bm{u}_h^{k+1},\nabla\mu_{h,i}^{k+1} \rangle 
  - \sum_{i,j=1}^N \langle M_{i,j}(\pvec{\rho}_{h}^{*})\nabla\mu_{h,i}^{k+1},
  \nabla\mu_{h,j}^{k+1} \rangle.
\end{align*}
The first term on the right-hand side cancels with the last term in \eqref{num.aux7}. This shows that
\begin{align*}
  \frac{1}{\tau}\langle E^{k+1}_h - E^{k}_h,1 \rangle 
  = -\mathcal{D}_{\rm num}
  - \big\langle\mathbb{S}(\pvec{\rho}_{h}^{*},\na\bm{u}_h^{k+1}),
  \na\bm{u}_h^{k+1}\big\rangle 
  - \sum_{i,j=1}^N \langle M_{i,j}(\pvec{\rho}_{h}^{*})
  \nabla\mu_{h,i}^{k+1},\nabla\mu_{h,j}^{k+1}\rangle,
\end{align*}
which finishes the proof.
\end{proof}

Assumptions (A0)--(A4) imply some pointwise bounds and bounds in $L^2(\Omega)$.

\begin{theorem}[A priori bounds]\label{thm.bd}
Let Assumptions (A0)--(A4) hold and  let $(\pvec{\rho}_{h}^{k}, \bm{u}_h^{k})\in\Pi_\tau^1(\mathcal{V}_{h,+}^N$ $\times\mathcal{X}_h^d)$, $(\vec\mu_{h}^k, p_h^k)\in \Pi_\tau^0(\mathcal{V}_h^N\times\mathcal{Q}_h)$ be a solution to \eqref{num.mass}--\eqref{num.divMu}. Then, for all $k=1,\ldots,n$:
\begin{itemize}
\item Pointwise constraint:
\begin{align*}
  \sum_{i=1}^NV_i\rho_{h,i}^k(x)=1\quad\mbox{for all }x\in\Omega.
\end{align*}
\item Pointwise bounds:  
\begin{align*}
  \rho_{h,i}^k\le V_i^{-1}\quad\mbox{for }i=1,\ldots,N, \quad 
  V_{\rm max}^{-1}\le\rho_h^k\le V_{\rm min}^{-1},
\end{align*}
where $V_{\rm min}=\min\{V_1,\ldots,V_N\}$ and $V_{\rm max}=\max\{V_1,\ldots,V_N\}$. 
\item A priori estimates:
\begin{align*}
  \max_{k=1,\ldots,n}\big(\|\bm{u}_h^k\|_{L^2(\Omega)}^2
  + \|(\rho_{h}^k)^{1/2}\bm{u}_h^k\|_{L^2(\Omega)}^2\big)
  + \sum_{k=1}^{n}\tau\|\na\bm{u}_h^k\|_{L^2(\Omega)}^2
  \le C(E_h^0), \\
%  \sum_{k=1}^{n}\tau\bigg(\|\na\bm{u}_h^k\|_{L^2(\Omega)}^2
%  + \bigg\|\sum_{j=1}^N M_{ij}(\vec\rho^k)^{1/2}
%  \na\mu_{h,j}^k\bigg\|_{L^2(\Omega)}^2\bigg) &\le C(E_h^0).
\end{align*}
\end{itemize}
\end{theorem}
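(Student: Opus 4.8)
The plan is to establish the three items in the listed order, since the pointwise bounds and the a~priori estimates both rest on the pointwise constraint.

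For the constraint I would argue by induction on $k$, working with the affine defect $g_h^k:=\sum_{i=1}^N V_i\rho_{h,i}^k-1$, which lies in $\mathcal{V}_h$ because each $\rho_{h,i}^k$ does. The base case $g_h^0=0$ in $\Omega$ is built into the scheme: taking the discrete initial datum as the nodal interpolant of $\vec\rho^0$, the function $g_h^0$ is piecewise linear and vanishes at every vertex by (A4), hence vanishes identically. For the inductive step, assume $g_h^k=0$; then the right-hand side of the variational constraint \eqref{num.varcon} of Theorem~\ref{thm.id}, which carries the factor $g_h^k$, vanishes, leaving $\langle g_h^{k+1},q\rangle=0$ for all $q\in\mathcal{Q}_h$. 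Splitting $g_h^{k+1}$ into its mean and its mean-free part $g_0\in\mathcal{Q}_h$ and testing with $q=g_0$ forces $\|g_0\|_{L^2(\Omega)}=0$, so $g_h^{k+1}$ is a constant; its mean $|\Omega|^{-1}\langle g_h^{k+1},1\rangle$ equals $|\Omega|^{-1}\langle g_h^0,1\rangle=0$ by the partial mass conservation of Theorem~\ref{thm.id} iterated back to $k=0$. Hence $g_h^{k+1}=0$, closing the induction.

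Granting the constraint and the assumed positivity $\rho_{h,i}^k>0$, the pointwise bounds are pure algebra: $V_i\rho_{h,i}^k=1-\sum_{j\ne i}V_j\rho_{h,j}^k\le 1$ gives $\rho_{h,i}^k\le V_i^{-1}$, and summing the constraint gives $V_{\min}\rho_h^k\le 1\le V_{\max}\rho_h^k$. For the a~priori estimates I would sum the discrete energy equality of Theorem~\ref{thm.id} over $k=0,\dots,m-1$ for an arbitrary $m\le n$. Each term it produces on the right-hand side is nonnegative --- $\langle\mathbb{S}(\vec\rho_h^{*},\na\bm{u}_h^{k+1}),\na\bm{u}_h^{k+1}\rangle\ge c_S\|\na\bm{u}_h^{k+1}\|_{L^2(\Omega)}^2$ by (A3), the mobility term by (A1), and $\mathcal{D}_{\rm num}$ because its kinetic part is a square and its Hessian part is a nonnegative quadratic form in the convex $f$ evaluated at the interior point $\vec\omega_h$ --- so telescoping gives $\langle E_h^m,1\rangle+c_S\sum_{k=1}^m\tau\|\na\bm{u}_h^k\|_{L^2(\Omega)}^2\le\langle E_h^0,1\rangle$, together with uniform bounds on the remaining dissipative sums. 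Since the first two items place $\vec\rho_h^k(x)$ in the compact simplex $\{\vec\rho\ge 0:\sum_iV_i\rho_i=1\}$ for every $x$, $\langle f(\vec\rho_h^m),1\rangle$ is bounded above and below by constants depending only on $f$, the $V_i$ and $|\Omega|$; writing $\tfrac12\|(\rho_h^m)^{1/2}\bm{u}_h^m\|_{L^2(\Omega)}^2=\langle E_h^m,1\rangle-\langle f(\vec\rho_h^m),1\rangle$ then bounds the weighted velocity uniformly in $m$, and $\rho_h^m\ge V_{\max}^{-1}$ upgrades this to the bound on $\|\bm{u}_h^m\|_{L^2(\Omega)}$. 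Collecting these yields the claimed estimate with $C(E_h^0)$ depending on $\langle E_h^0,1\rangle$, $c_S$, the $V_i$ and $|\Omega|$.

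The only genuinely non-routine step is the inductive step for the constraint: one must notice that the affine defect $g_h^{k+1}$ is, up to its mean, an admissible test function in \eqref{num.varcon}, and that partial mass conservation is precisely what pins the remaining constant to zero. Everything else is algebra (the pointwise bounds) or a telescoping that works only because every dissipation mechanism built into the scheme has a definite sign; I would also keep in mind that the hypothesis $\rho_{h,i}^k>0$ is what justifies both the pointwise bounds and the evaluation of $\pa^2 f/\pa\rho_i\pa\rho_j$ at $\vec\omega_h$.
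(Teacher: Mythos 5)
Your proposal is correct and follows essentially the same route as the paper: induction on the defect $Z_h^k=\sum_iV_i\rho_{h,i}^k-1$ using \eqref{num.varcon} with partial mass conservation to control the mean, the same algebra for the pointwise bounds, and telescoping the discrete energy equality with the sign conditions (A1), (A3) and the nonnegativity of $\mathcal{D}_{\rm num}$ for the a priori estimates. The only cosmetic difference is that the paper verifies $Z_h^{k+1}\in\mathcal{Q}_h$ first and tests \eqref{num.varcon} with $q=Z_h^{k+1}$ itself, whereas you test with its mean-free part and pin the remaining constant afterwards; both are valid.
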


Bounds for the gradients of the partial mass densities strongly depend on the structure of the mobility matrix $(M_{ij}(\pvec{\rho}_{h}^{k}))$. Matrix analysis arguments show for the continuous Maxwell--Stefan system that the mass densities are bounded in $H^1(\Omega)$ \cite[Lemma 3.2]{JuSt13}.

\begin{proof}
Set $Z_h^k:=\sum_{i=1}^N V_i\rho_{h,i}^k-1$. Assumption (A4) implies that $Z_h^0=0$. 
We use an induction argument to show that $Z_h^k=0$ for all $k$. We wish to take $Z_h^{k+1}$ as a test function in \eqref{num.varcon}. To this end, we first show that $Z_h^{k+1}\in \mathcal{Q}_h$.
By partial mass conservation, $\langle V_i\rho_{h,i}^{k},\mathrm{1}\rangle = \langle V_i\rho_{h,i}^0,\mathrm{1}\rangle$ for all $i=1,\ldots,N$ and $k=1,\ldots,n$. Summation over $i=1,\ldots,N$ shows that
\begin{align*}
  \langle Z_h^k,\mathrm{1}\rangle
  = \bigg\langle\sum_{i=1}^N V_i\rho_{h,i}^k-1,\mathrm{1}\bigg\rangle
  = \bigg\langle\sum_{i=1}^N V_i\rho_{h,i}^0-1,\mathrm{1}\bigg\rangle 
  = \langle Z_h^0,\mathrm{1}\rangle = 0.
\end{align*}
Equation \eqref{num.varcon} with the test function $q=Z_h^{k+1}\in\mathcal{Q}_h$ reads as
\begin{align*}
  \|Z_h^{k+1}\|_{L^2(\Omega)}^2 + \|Z_h^{k+1}-Z_h^{k}\|_{L^2(\Omega)}^2
  =\|Z_h^{k}\|_{L^2(\Omega)}^2 + \tau\langle Z_h^k\bm{u}_h^{k+1},\na Z_h^{k+1}\rangle = 0.
\end{align*}
We conclude from the above equation that if $Z_h^{k}=0$ everywhere, then also $Z_h^{k+1}=0$ everywhere. This shows the claim.

Next, the upper bound for $\rho_{h,i}^k$ follows from $V_j\rho_{h,j}^k=1-\sum_{i\neq j} V_i\rho_{h,i}^k\le 1$, using the nonnegativity of $\rho_{h,i}^k$. The bounds for the total mass density is a consequence of
\begin{align*}
  \frac{1}{V_{\rm max}} 
  = \sum_{i=1}^N \frac{V_i}{V_{\rm max}}\rho_{h,i}^k
  \le \sum_{i=1}^N\rho_{h,i}^k 
  \le \sum_{i=1}^N \frac{V_i}{V_{\rm min}}\rho_{h,i}^k
  = \frac{1}{V_{\rm min}}.
\end{align*}
Finally, the discrete energy identity from Theorem \ref{thm.id} and Assumption (A3) imply directly the a priori estimates.
\end{proof}

\subsection{Remarks}

We discuss some variants of the numerical scheme. 

\begin{remark}[Implicit discretization in \eqref{num.mass}]\rm
Observe that the choice of the old time step $\rho_{h,i}^k$ in the transport term of \eqref{num.mass} allows us to derive the pointwise form of the constraint. A similar result can be proven for the implicit discretization, i.e.\ replacing $\rho_{h,i}^k\bm{u}_h^{k+1}$ by $\rho_{h,i}^{k+1}\bm{u}_h^{k+1}$ in \eqref{num.mass} under the assumption that all mass densities $\rho_{h,i}^k$ are uniformly positive independently of the time step $\tau$ and under a CFL-type condition. Indeed, similarly as in the previous proof, we find that
\begin{align*}
  \langle Z_h^1,q\rangle - \langle Z_h^0,q\rangle
  = \tau\langle Z_h^1\bm{u}_h^1,\na q\rangle,
\end{align*}
and the choice $q=Z_h^1$ leads to 
\begin{align*}
  \|q\|_{L^2(\Omega)}^2 = \frac{\tau}{2}\langle \bm{u}_h^{1},
  \na|Z_h^1|^2\rangle = -\frac{\tau}{2}\langle \divM\bm{u}_h^{1},
  |Z_h^1|^2\rangle \le \frac{\tau}{2}\|\divM\bm{u}_h^1\|_{L^2(\Omega)}
  \|Z_h^1\|_{L^2(\Omega)}^2.
\end{align*}
It follows from the inverse inequality for finite element functions between the spaces $W^{1,\infty}(\Omega)$ and $L^2(\Omega)$ that
\begin{align*}
  \|q\|_{L^2(\Omega)}^2 \le C\tau h^{-1-d/2}
  \|\bm{u}_h^1\|_{L^2(\Omega)}\|Z_h^1\|_{L^2(\Omega)}^2
  \le C_0\tau h^{-1-d/2}\|q\|_{L^2(\Omega)}^2.
\end{align*}
We infer from the CFL-type condition $\tau h^{-1-d/2} < 1/C_0$ that $\|q\|_{L^2(\Omega)}^2=0$ and hence $Z_h^1=0$.
\end{remark}

\begin{remark}[Equal specific volumes]\rm
In the case $V_i=V$ for $i=1,\ldots,N$, the numerical scheme simplifies. Indeed, we obtain $1=\sum_{i=1}^N V_i\rho_{h,i}^k=V\rho_h^k$ and hence $\rho_h^k=V^{-1}$. Next, we choose the test function $\psi_i=\psi\in\mathcal{V}_h$ in \eqref{num.mass}, sum over $i=1,\ldots,N$, and use $\sum_{i=1}^N M_{ij}(\pvec{\rho}_{h}^{*})=0$:
\begin{align*}
  \langle V^{-1}\bm{u}_h^{k+1},\na\psi\rangle 
  = -\frac{1}{\tau}\sum_{i=1}^N\langle\rho_{h,i}^{k+1}-\rho_{h,i}^k,
  \psi\rangle
  + \sum_{i,j=1}^n\big\langle M_{ij}(\pvec{\rho}_{h}^{*})
  \na\mu_{h,j}^{k+1},\na\psi\big\rangle = 0,
\end{align*}
which yields $\langle\bm{u}_h^{k+1},\na\psi\rangle=0$ for all $\psi\in\mathcal{V}_h$. The discrete momentum equation \eqref{num.mom} then becomes for $\bm{v}\in\mathcal{X}_h^d$: 
\begin{align*}
  \langle\rho&(\bm{u}_h^{k+1}-\bm{u}_h^k),\bm{v}\rangle
  + \bm{b}_{\rm skw}(\rho\bm{u}_h^*,\bm{u}_h^{k+1},\bm{v})
  - \langle p_h^{k+1},\divM\bm{v}\rangle \\
  &+ \big\langle\mathbb{S}(\pvec{\rho}_{h}^{*},\na\bm{u}_h^{k+1}),\na\bm{v}\big\rangle
  + \sum_{i=1}^N\big\langle\rho_{h,i}^k\na(\mu_{h,i}^{k+1}
  -V_ip_h^{k+1}),\bm{v}\big\rangle = 0.
\end{align*}

We claim that we can formulate a nodal representation in which the pressure does not appear. For this, let $(\phi_\ell)$ be a (finite-dimensional) basis of $\mathcal{V}_h$, let $L_{\ell m}=\langle\phi_\ell,\phi_m\rangle$ be the entries of the mass matrix $\mathbb{L}$, $K_{ij,\ell m}^*=\langle M_{ij}(\pvec{\rho}_{h}^{*})\na\phi_\ell,\na\phi_m\rangle$ be the entries of the weighted stiffness matrix $\mathbb{K}_{ij}^*$, and $F_{i,j}^k=\langle(\pa f/\pa\rho_i)(\pvec{\rho}_{h}^{k}),\phi_j\rangle$ be the entries of the nonlinear term $\bm{F}_i^k$. Slightly abusing our notation, we set $\bm{\mu}_{i}^k = \langle\mu_{h,i}^k, \phi_\ell\rangle_{\ell}$ and $\bm{p}^k=\langle p_h^k, \phi_\ell\rangle_{\ell}$. Then \eqref{num.mu} becomes
\begin{align*}
  \bm{\mu}_i^k = \mathbb{L}^{-1}\bm{F}_i^{k} + V\bm{p}^k
  \quad\mbox{for }i=1,\ldots,N
\end{align*}
and consequently,
\begin{align*}
  \sum_{j=1}^n\big\langle M_{ij}(\pvec{\rho}_{h}^{*})\na\mu_{h,j}^k,
  \na\phi_i\big\rangle_\ell
  = \sum_{j=1}^N \mathbb{K}_{ij}^*\bm{\mu}_j^k
  = \sum_{j=1}^N \mathbb{K}_{ij}^*\big(\mathbb{L}^{-1}\bm{F}_j^k
  + V\bm{p}^k\big) 
  = \sum_{j=1}^N \mathbb{K}_{ij}^*\mathbb{L}^{-1}\bm{F}_j^k,
\end{align*}
where the last step follows from the fact that the sum over the entries of the mobility matrix vanishes. Then, defining $\widehat{\bm{\mu}}_j^k=\mathbb{L}^{-1}\bm{F}_j^k$,
\begin{align*}
  \sum_{j=1}^n\big\langle M_{ij}(\pvec{\rho}_{h}^{*})\na\mu_{h,j}^k,
  \na\phi_\ell\big\rangle
  = \sum_{j=1}^n\big\langle M_{ij}(\pvec{\rho}_{h}^{*})\na\widehat{\mu}_{j}^k,
  \na\phi_\ell\big\rangle,
\end{align*}
which means that we can replace $\bm{\mu}_j^k$ by $\widehat{\bm{\mu}}_j^k$. Furthermore, introducing the matrix $\mathbb{G}(\rho_{h,i}^k)$ with entries $G_{\ell m}(\rho_{h,i}^k)=\langle\rho_{h,i}^k\na\phi_\ell,\phi_m\rangle$ and inserting the vector equation for $\bm{\mu}_i^k$,
\begin{align*}
  \sum_{i=1}^N\big\langle\rho_{h,i}^k(\na\mu_{h,i}^k - V\na p_h^k),
  \bm{v}\big\rangle 
  &= \sum_{i=1}^N\mathbb{G}(\rho_{h,i}^k)\big(\bm{\mu}_i^k
  - V\bm{p}^k\big)\bm{v} \\
  &= \sum_{i=1}^N\mathbb{G}(\rho_{h,i}^k)\mathbb{L}^{-1}\bm{F}_i^k\bm{v}
  = \sum_{i=1}^N\langle\rho_{h,i}^k\na\widehat{\mu}_{h,i}^k,
  \bm{v}\rangle.
\end{align*}
Thus, only $\widehat{\bm{\mu}}_i^k$ appears in the incompressible Navier--Stokes--Maxwell--Stefan equations instead of $\bm{\mu}_i^k$.
\end{remark}

%%%%%%%%%%%%%%%%%%%%%%%%%%%%%%%%%%%%%%%%%%%%%%%%%%%%%%%%%%%%%%%

\section{Numerical experiments}\label{sec.exp}

In this section, we present some numerical simulations in two space dimensions, illustrating the convergence of the scheme and the structure-preserving properties. The scheme is implemented in NGSolve \cite{Sch14}, using Netgen for the generation of an unstructured mesh of triangles. The resulting nonlinear system is solved by Newton's method with a tolerance of $10^{-9}$ (measured in the $L^2(\Omega)$ norm of the Newton updates), and the linear systems are computed via direct LU decomposition. In all tests, we fixed the computational domain $\Omega=(0,1)^2$ with periodic boundary conditions. This means that we identify $\Omega$ with the two-dimensional torus. 

\subsection{Convergence tests}

We consider $N=2$ components, the final time $T=0.1$, and the stress tensor $\mathbb{S}(\na\bm{u})=\nu(\na\bm{u}^T+\na\bm{u}) + \lambda(\divM\bm{u})\mathbb{I}$ with $\nu=10^{-3}$ and $\lambda=0$. The internal energy density is chosen as
$f(\vec\rho)=\sum_{i=1}^2\rho_i\log(\rho_i/\rho)$, and the mobility matrix is given by \eqref{1.M}, i.e. $M_{ij}(\pvec{\rho})=\rho_i\delta_{ij} - \rho_i\rho_j/\rho$. The initial data equals
\begin{align*}
  & \rho_1^0(x,y) = 1+0.8\sin(4\pi x)\sin(2\pi y), \quad
  \rho_2^0(x,y) = V_2^{-1}(1-V_1\rho_1^0(x,y)), \\
  & \bm{u}^0(x,y) = \big(-\sin(\pi x)^2\sin(2\pi y),
  \sin(2\pi x)\sin(\pi y)^2\big)^T.
\end{align*}
Since no exact solution is available, we compute the error by using the reference solution at the finest space resolution. Given the mesh sizes $h_k=2^{-k-1}$ for $k=1,\ldots,8$ and $h_{\rm ref}=2^{-8}$, we define the error quantities 
\begin{align*}
  & \mbox{err}(\vec\rho) = \max_{\ell=1,\ldots,n} \sqrt{
  \sum_{i=1}^2\|\rho_{h_k,i}^\ell-\rho_{{\rm ref},i}^\ell
  \|_{L^2(\Omega)}^2}, \quad
  \mbox{err}(\vec\mu) = \sqrt{\tau\sum_{\ell=1}^{n}\sum_{i=1}^2
  \|\mu_{h_k,i}^\ell-\mu_{{\rm ref},i}^\ell\|_{L^2(\Omega)}^2}, \\
  & \mbox{err}(\bm{u}) = \max_{\ell=1,\ldots,n}
  \sqrt{\|\bm{u}_{h_k}^\ell-\bm{u}_{\rm ref}^\ell\|_{L^2(\Omega)}^2}, \qquad\quad
  \mbox{err}(p) = \sqrt{\tau\sum_{\ell=1}^{n}\|p_{h_k}^\ell-p_{\rm ref}^\ell
  \|_{L^2(\Omega)}^2}.
\end{align*}
We have chosen the time step size $\tau=10^{-3}$ such that $n=T/\tau=100$. Table \ref{table1_sqrt} presents the errors and the experimental order of convergence for $V_1=0.3$, $V_2=0.7$, while Table \ref{table2_sqrt} shows these values for the choice $V_1=V_2=0.5$. As expected, we observe second-order convergence for $(\vec\rho,\vec\mu,p)$ and second-order convergence for $\bm{u}$ (recall that the velocity is approximated by piecewise quadratic polynomials, for which third-order might be possible). In the second test case, the order of convergence for the velocity is larger than in the first test, which comes from the choice $V_1=V_2$, i.e., we have solved the incompressible Navier--Stokes equations. In fact, for the incompressible Navier--Stokes system, one can show optimal third-order convergence, while in the quasi-incompressible case, it is unclear if that order can be proven. In both tests, the partial and total masses and the point wise quasi-incompressibility constraint are conserved up to errors of order $10^{-14}$. The time step is chosen sufficiently small for two reasons. The first is to ensure that the temporal error does not dominate the spatial error. The second is to guarantee convergence of Newton iterations for all meshes. The latter is related to the existence of (unique) discrete solutions.

% \begin{table}[htbp!]
% 	\centering
% 	\small
% 	\caption{ $L^2(\Omega)$ errors and experimental order of convergence (eoc) for $V_1=0.3$, $V_2=0.7$.} 
% 	\begin{tabular}{|c||c|c|c|c|c|c|c|c|}
% 		\hline
% 		$ k $ & $\text{err}(\vec\rho)$ &  eoc & $\text{err}(\vec\mu)$ & eoc & $\text{err}(\bm{u})$ &  eoc & $\text{err}(p)$ & eoc   \\
% 		\hline
% 		1 & $5.31\cdot 10^{-2}$ & --       &  $3.04\cdot 10^{-3}$ & --    
% 		  & $1.42\cdot 10^{-1}$ & --       &  $5.67\cdot 10^{-3}$ & -- \\
% 		2 & $7.90\cdot 10^{-2}$ & $-$0.57    &  $4.74\cdot 10^{-3}$ & $-$0.63 
% 		  & $6.02\cdot 10^{-3}$ & \phm4.56 &  $8.81\cdot 10^{-3}$ & $-$0.63 \\
% 		3 & $6.83\cdot 10^{-3}$ & \phm3.53 &  $6.15\cdot 10^{-4}$ & \phm2.95 
% 		  & $1.48\cdot 10^{-3}$ & \phm2.02 &  $9.94\cdot 10^{-4}$ & \phm3.15\\
% 		4 & $3.75\cdot 10^{-4}$ & \phm4.18 &  $4.47\cdot 10^{-5}$ & \phm3.78
% 	      & $1.96\cdot 10^{-4}$ & \phm2.90 &  $6.60\cdot 10^{-5}$ & \phm3.91 \\
% 		5 & $2.36\cdot 10^{-5}$ & \phm4.00 &  $2.85\cdot 10^{-6}$ & \phm3.97 
%           & $1.40\cdot 10^{-5}$ & \phm3.82 &  $4.18\cdot 10^{-6}$ & \phm3.98 \\
% 		6 & $1.42\cdot 10^{-6}$ & \phm4.04 &  $1.69\cdot 10^{-7}$ & \phm4.07 
% 		  & $4.66\cdot 10^{-7}$ & \phm4.91 &  $2.49\cdot 10^{-7}$ & \phm4.07 \\
% 		7 & $9.24\cdot 10^{-8}$ & \phm3.94 &  $1.12\cdot 10^{-8}$ & \phm3.91 
% 		  & $1.57\cdot 10^{-8}$ & \phm4.89 &  $1.64\cdot 10^{-8}$ & \phm3.92 \\
% 		\hline
% 	\end{tabular}
% 	\label{table1}
% \end{table}

\begin{table}[htbp!]
	\centering
	\small
	\caption{$L^2(\Omega)$ errors and experimental order of convergence for $V_1=0.3$, $V_2=0.7$.} 
	\begin{tabular}{|c||c|c|c|c|c|c|c|c|}
		\hline
		$ k $ & $\text{err}(\vec\rho)$ &  eoc & $\text{err}(\vec\mu)$ & eoc 
		  & $\text{err}(\bm{u})$ &  eoc & $\text{err}(p)$ & eoc   \\
		\hline
		1 & $2.30\cdot 10^{-1}$ & --       &  $5.52\cdot 10^{-2}$ & --    
		  & $3.77\cdot 10^{-1}$ & --       &  $7.53\cdot 10^{-2}$ & -- \\
		2 & $2.81\cdot 10^{-1}$ & $-$0.29    &  $6.89\cdot 10^{-2}$ & $-$0.32 
		  & $7.76\cdot 10^{-2}$ & \phm2.28 &  $9.39\cdot 10^{-2}$ & $-$0.32 \\
		3 & $8.27\cdot 10^{-2}$ & \phm1.77 &  $2.48\cdot 10^{-2}$ & \phm1.48 
		  & $3.85\cdot 10^{-2}$ & \phm1.01 &  $3.15\cdot 10^{-2}$ & \phm1.58\\
		4 & $1.94\cdot 10^{-2}$ & \phm2.09 &  $6.68\cdot 10^{-3}$ & \phm1.89
	      & $1.40\cdot 10^{-2}$ & \phm1.45 &  $8.12\cdot 10^{-3}$ & \phm1.96 \\
		5 & $4.86\cdot 10^{-3}$ & \phm2.00 &  $1.69\cdot 10^{-3}$ & \phm1.99 
          & $3.74\cdot 10^{-3}$ & \phm1.91 &  $2.04\cdot 10^{-3}$ & \phm1.99 \\
		6 & $1.19\cdot 10^{-3}$ & \phm2.02 &  $4.11\cdot 10^{-4}$ & \phm2.04 
		  & $6.82\cdot 10^{-4}$ & \phm2.46 &  $4.99\cdot 10^{-4}$ & \phm2.04 \\
		7 & $3.04\cdot 10^{-4}$ & \phm1.97 &  $1.06\cdot 10^{-4}$ & \phm1.96 
		  & $1.25\cdot 10^{-4}$ & \phm2.45 &  $1.28\cdot 10^{-4}$ & \phm1.96 \\
		\hline
	\end{tabular}
	\label{table1_sqrt}
\end{table}

% \begin{table}[htbp!]
% 	\centering
% 	\small
% 	\caption{ $L^2(\Omega)$ errors and experimental order of convergence (eoc) for $V_1=V_2=0.5$.} 
% 	\begin{tabular}{|c||c|c|c|c|c|c|c|c|}
% 		 \hline
% 		$ k $ & $\text{err}(\vec\rho)$ &  eoc & $\text{err}(\vec\mu)$ & eoc & $\text{err}(\bm{u})$ &  eoc & $\text{err}(p)$ & eoc   \\
% 		\hline
% 		1 & $8.96\cdot 10^{-2}$ & --       &  $8.14\cdot 10^{-3}$ & --        & $1.37\cdot 10^{-1}$ & --       &  $1.46\cdot 10^{-2}$  & -- \\
% 		2 & $1.33\cdot 10^{-1}$ & $-$0.57  &  $6.81\cdot 10^{-3}$ & \phm0.26  & $4.84\cdot 10^{-4}$ & \phm8.15 &  $1.10\cdot 10^{-2}$  & \phm0.42 \\
% 		3 & $1.15\cdot 10^{-2}$ & \phm3.53 &  $2.16\cdot 10^{-4}$ & \phm4.97  & $3.55\cdot 10^{-5}$ & \phm3.77 &  $4.41\cdot 10^{-5}$  & \phm7.96\\
% 		4 & $6.36\cdot 10^{-4}$ & \phm4.18 &  $1.71\cdot 10^{-5}$ & \phm3.66  & $6.26\cdot 10^{-6}$ & \phm2.50 &  $4.78\cdot 10^{-7}$  & \phm6.53 \\
% 		5 & $3.87\cdot 10^{-5}$ & \phm4.04 &  $1.10\cdot 10^{-6}$ & \phm3.96  & $4.30\cdot 10^{-7}$ & \phm3.86 &  $1.41\cdot 10^{-8}$  & \phm5.09 \\
% 		6 & $2.33\cdot 10^{-6}$ & \phm4.05 &  $6.74\cdot 10^{-8}$ & \phm4.02  & $1.43\cdot 10^{-8}$ & \phm4.90 &  $7.33\cdot 10^{-10}$ & \phm4.23 \\
% 		7 & $1.56\cdot 10^{-7}$ & \phm3.90 &  $4.51\cdot 10^{-9}$ & \phm3.90  & $2.89\cdot 10^{-10}$& \phm5.63 &  $4.80\cdot 10^{-11}$ & \phm3.93 \\
% 		\hline
% 	\end{tabular}
% 	\label{table2}
% \end{table}
\begin{table}[htbp!]
	\centering
	\small
	\caption{$L^2(\Omega)$ errors and experimental order of convergence for $V_1=V_2=0.5$.} 
	\begin{tabular}{|c||c|c|c|c|c|c|c|c|}
		\hline
		$ k $ & $\text{err}(\vec\rho)$ &  eoc & $\text{err}(\vec\mu)$ & eoc 
		  & $\text{err}(\bm{u})$ &  eoc & $\text{err}(p)$ & eoc   \\
		\hline
		1 & $2.99\cdot 10^{-1}$ & --       &  $9.02\cdot 10^{-2}$ & --    
		  & $3.70\cdot 10^{-1}$ & --       &  $1.21\cdot 10^{-1}$ & -- \\
		2 & $3.64\cdot 10^{-1}$ & $-$0.29  &  $8.26\cdot 10^{-2}$ & \phm0.13 
		  & $2.20\cdot 10^{-2}$ & \phm4.08 &  $1.05\cdot 10^{-1}$ & \phm0.21 \\
		3 & $1.07\cdot 10^{-1}$ & \phm1.77 &  $1.47\cdot 10^{-2}$ & \phm2.49 
		  & $5.96\cdot 10^{-3}$ & \phm1.89 &  $6.64\cdot 10^{-3}$ & \phm3.98\\
		4 & $2.52\cdot 10^{-2}$ & \phm2.09 &  $4.14\cdot 10^{-3}$ & \phm1.83
	      & $2.50\cdot 10^{-3}$ & \phm1.25 &  $6.91\cdot 10^{-4}$ & \phm3.27 \\
		5 & $6.22\cdot 10^{-3}$ & \phm2.02 &  $1.05\cdot 10^{-3}$ & \phm1.98 
          & $6.56\cdot 10^{-4}$ & \phm1.93 &  $1.19\cdot 10^{-4}$ & \phm2.55 \\
		6 & $1.53\cdot 10^{-3}$ & \phm2.02 &  $2.60\cdot 10^{-4}$ & \phm2.01 
		  & $1.20\cdot 10^{-4}$ & \phm2.45 &  $2.71\cdot 10^{-5}$ & \phm2.12 \\
		7 & $3.95\cdot 10^{-4}$ & \phm1.95 &  $6.72\cdot 10^{-5}$ & \phm1.95 
		  & $1.70\cdot 10^{-5}$ & \phm2.82 &  $6.93\cdot 10^{-6}$ & \phm1.97 \\
		\hline
	\end{tabular}
	\label{table2_sqrt}
\end{table}

\subsection{Three-component experiment I}\label{subsec:experiment}

We present a numerical test for $N=3$ components based on a similar test in \cite[Sec.~5.2]{CEM24}. We choose the stress tensor as in the previous section with $\nu=10^{-2}$ and $\lambda=0$, the internal energy density $f(\vec\rho)=\sum_{i=1}^3\rho_i\log(\rho_i/\rho)$, and the mobility matrix \eqref{1.M}, i.e. $M_{ij}(\pvec{\rho})=\rho_i\delta_{ij} - \rho_i\rho_j/\rho$. The specific volumes are $V_1=V_2=0.35$ and $V_3=0.8$. The final time is set to $T=0.5$, the time step size equals $\tau=10^{-4}$, and the maximal mesh size is $h=2^{-7}$. The initial data reads as
\begin{align*}
  \rho_1^0(x,y) &= 0.2 + 0.9\chi(x,y,y\in[0.1,0.2)\cup (0.8,0.9]) +0.9\chi(x,y,x\in[0.1,0.2)\cup (0.8,0.9])  \\
  &\phantom{xx}- 0.9\chi(x,y,(x,y)\in ((0.1,0.2]\times ([0.1,0.2)\cup[0.8,0.9))) \\
  &\phantom{xx}- 0.9\chi(x,y,(x,y)\in ([0.1,0.2)\cup[0.8,0.9))\times(0.1,0.2]), \\
  \rho_2^0(x,y) &= 0.2 
  + 0.9\chi(x,y,(x,y)\in\{(x-0.5)^2+(y-0.5)^2-0.25^2\leq 0\}), \\
  \rho_3^0(x,y) &= V_3^{-1}\big(1-V_1\rho_1(x,y) - V_2\rho_3(x,y)
  \big), \\
  \bm{u}(x,y) &= \big(-\sin(\pi x)^2\sin(2\pi y),
  \sin(\pi y)^2\sin(2\pi x)\big)^T,
\end{align*}  
where $\chi(x,y,y\in I)$ is the characteristic function on $[0,1]\times I$, and similarly for the other functions. Figures \ref{fig.rho}--\ref{fig.up} illustrate the time evolution of the partial mass densities $\rho_i$, total mass $\rho$, velocity magnitude $|\bm{u}|^2$, and pressure $p$. The coupling to the Navier--Stokes equations induces rotating shapes before the mass densities converge to the homogeneous steady state. The intermediate state is different from the numerical results of the Maxwell--Stefan equations presented in \cite[Sec.~5.2]{CEM24}, where the mass densities only diffuse and do not show any rotational effect. It should be emphasized that the model in \cite[Sec.~5.2]{CEM24} does not involve the Navier-Stokes equations. Hence, it is expected that the temporal evolution of the solutions differ.

\begin{figure}[ht]
\centering
%\footnotesize
\begin{tabular}{c@{}c@{}c@{}c@{}c@{}c}
	\multicolumn{6}{c}{\includegraphics[trim={24.0cm 67.5cm 28.0cm 0.2cm},clip,scale=0.14]{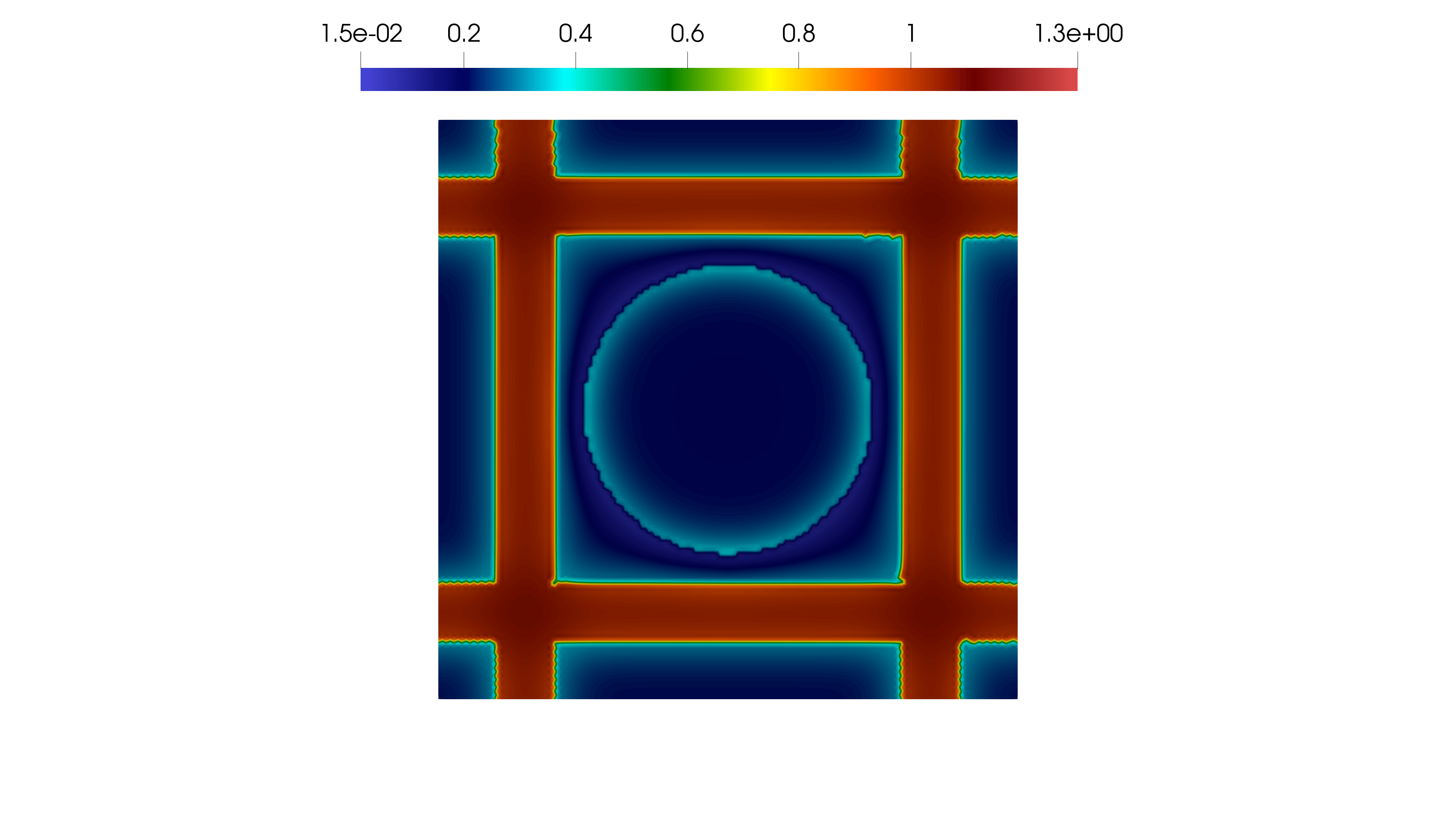}} \\[-0.25em]
	\includegraphics[trim={39cm 9.4cm 39.cm 9.5cm},clip,scale=0.042]{ rho_1.0010.png}
	&
	\includegraphics[trim={39cm 9.4cm 39.cm 9.5cm},clip,scale=0.042]{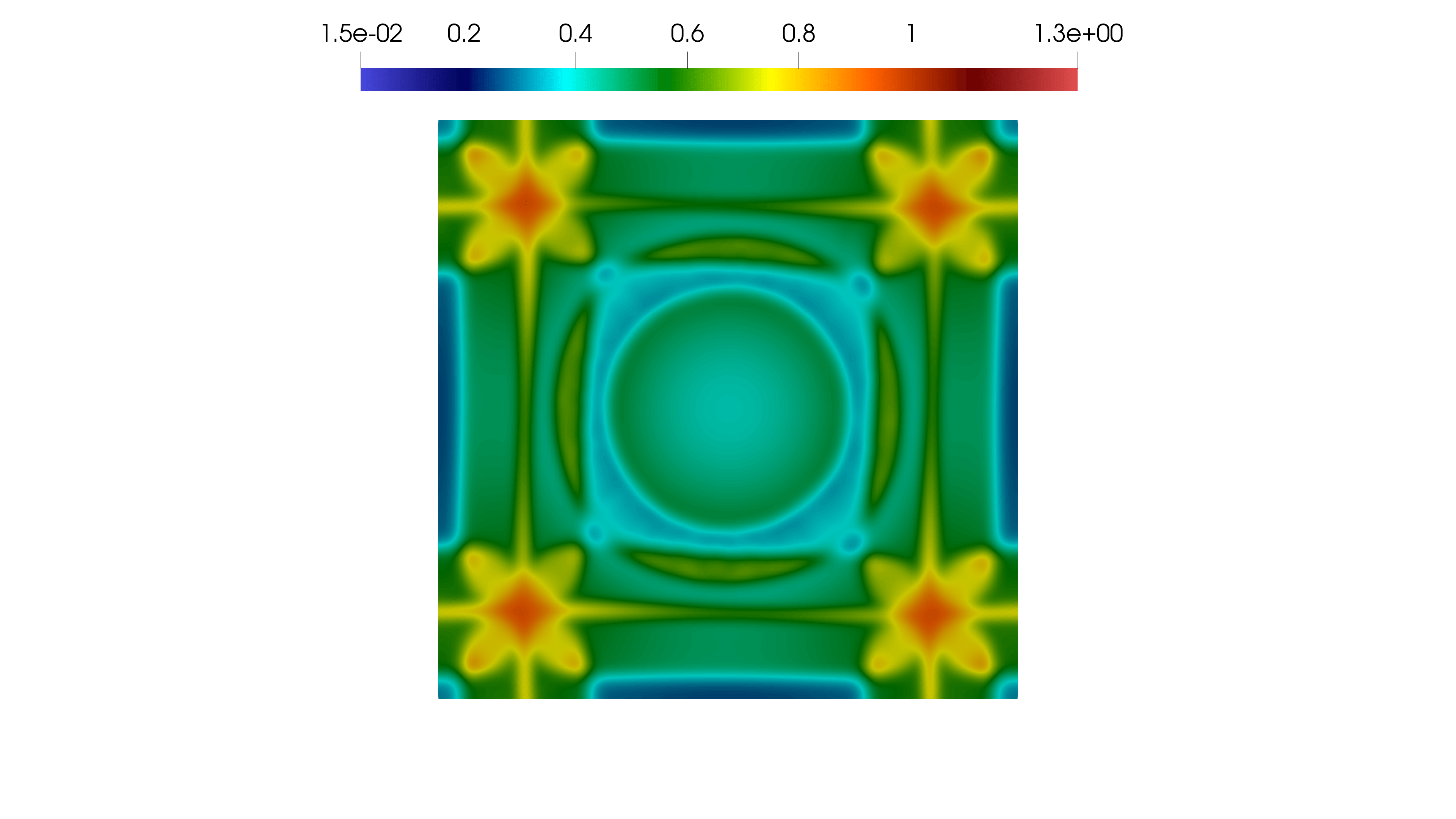} 
	&
	\includegraphics[trim={39cm 9.4cm 39.cm 9.5cm},clip,scale=0.042]{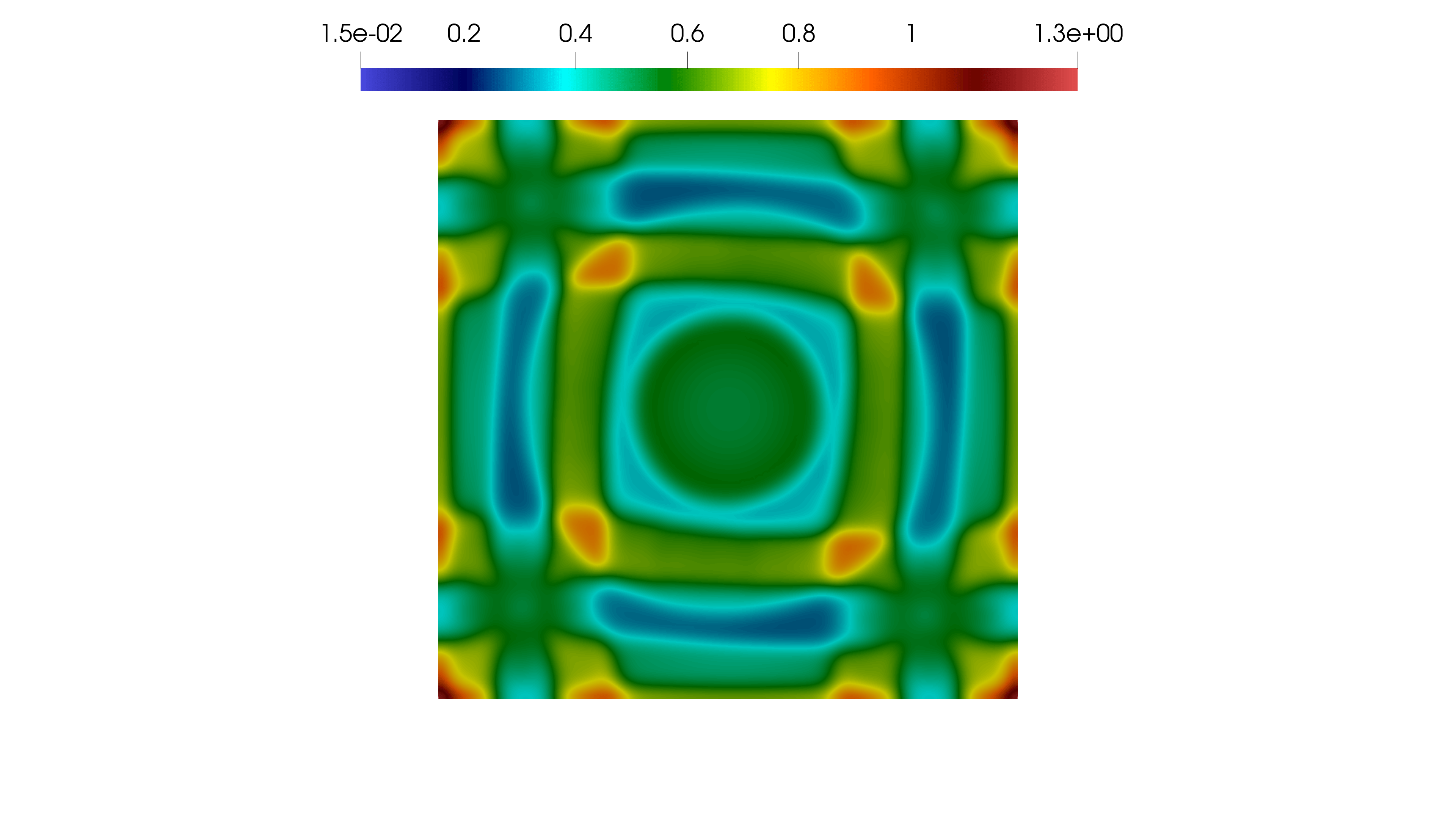}
	& 
	\includegraphics[trim={39cm 9.4cm 39.cm 9.5cm},clip,scale=0.042]{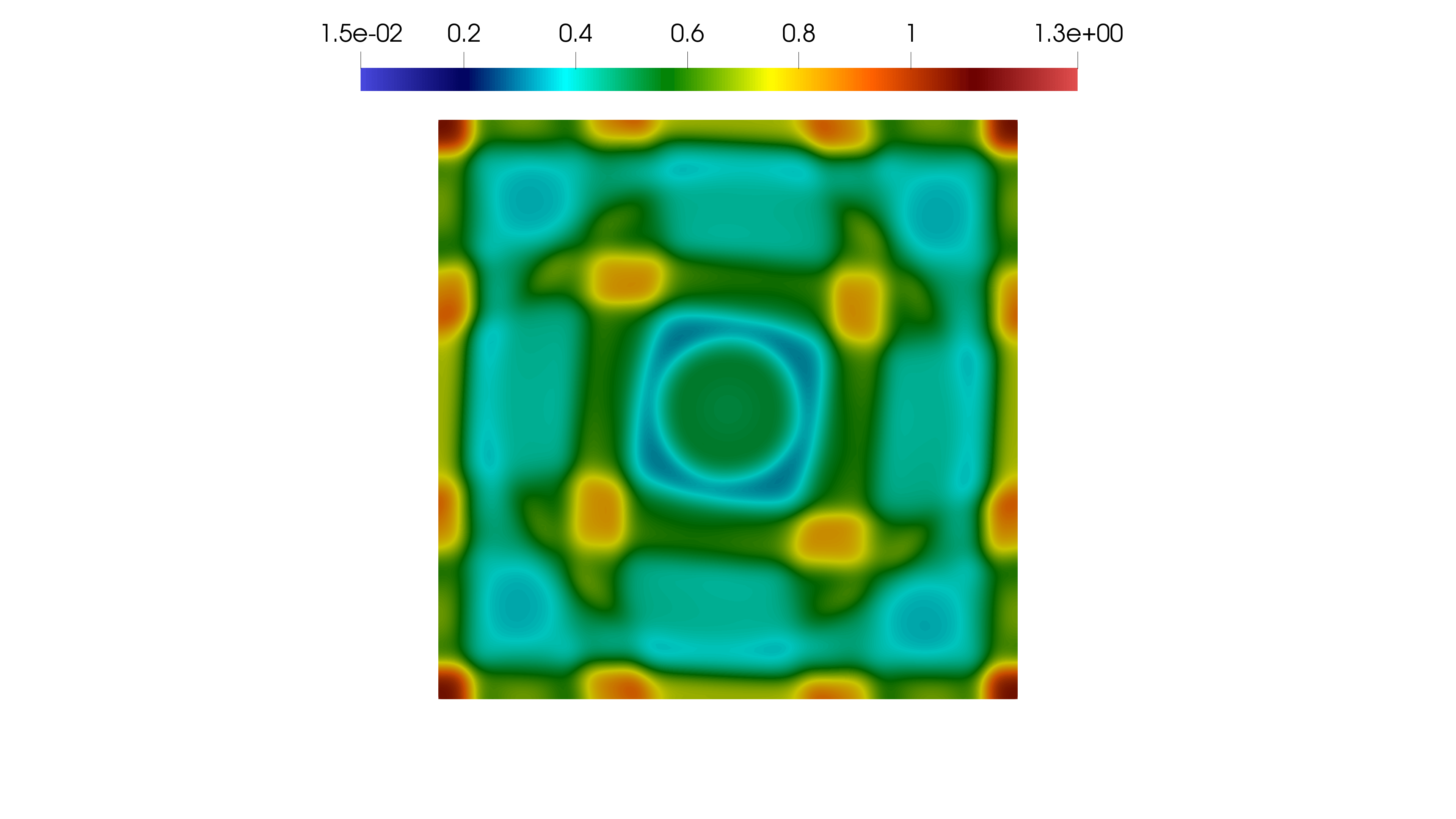} 
	&
	\includegraphics[trim={39cm 9.4cm 39.cm 9.5cm},clip,scale=0.042]{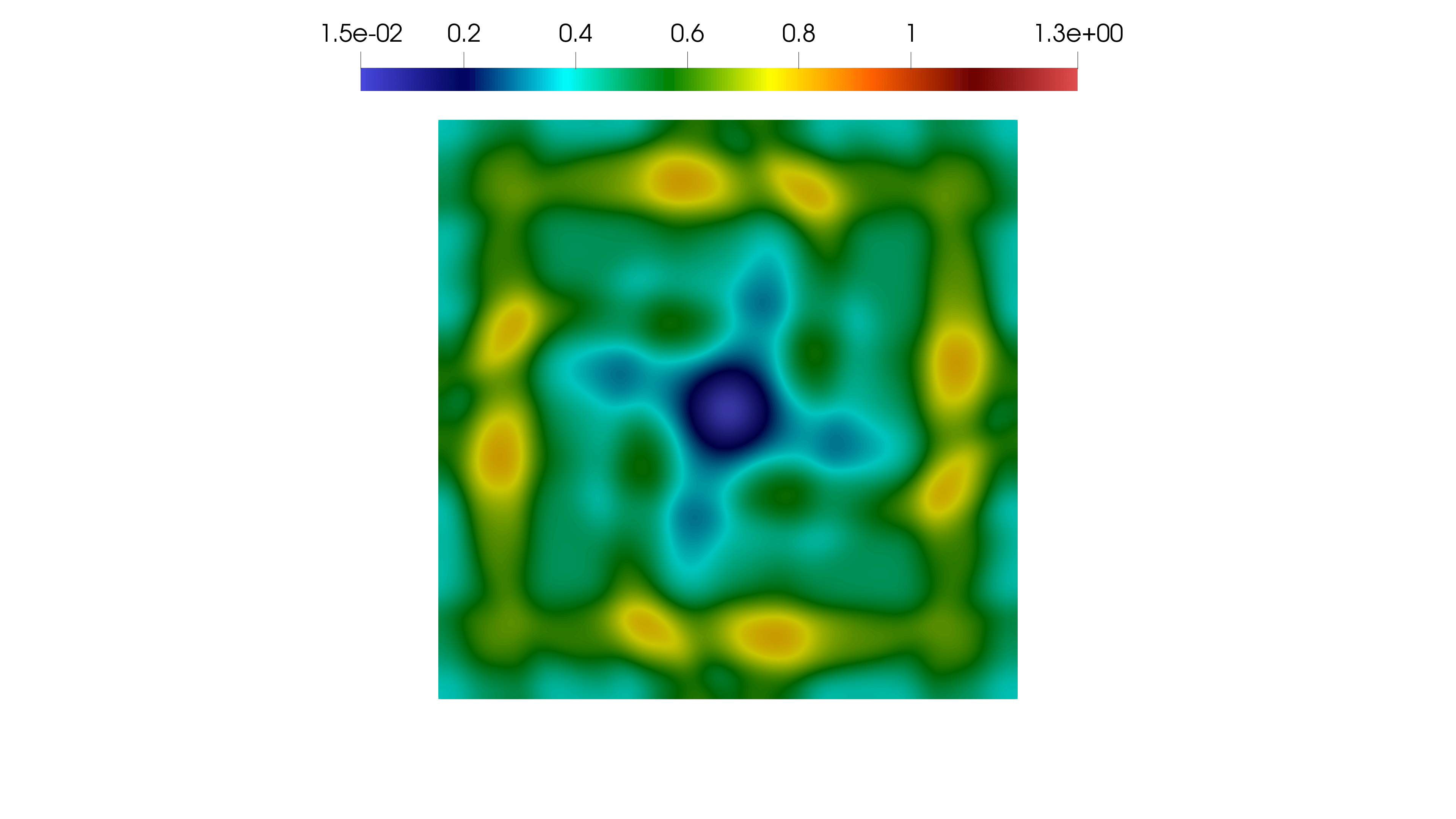}  
	&
	\includegraphics[trim={39cm 9.4cm 39.cm 9.5cm},clip,scale=0.042]{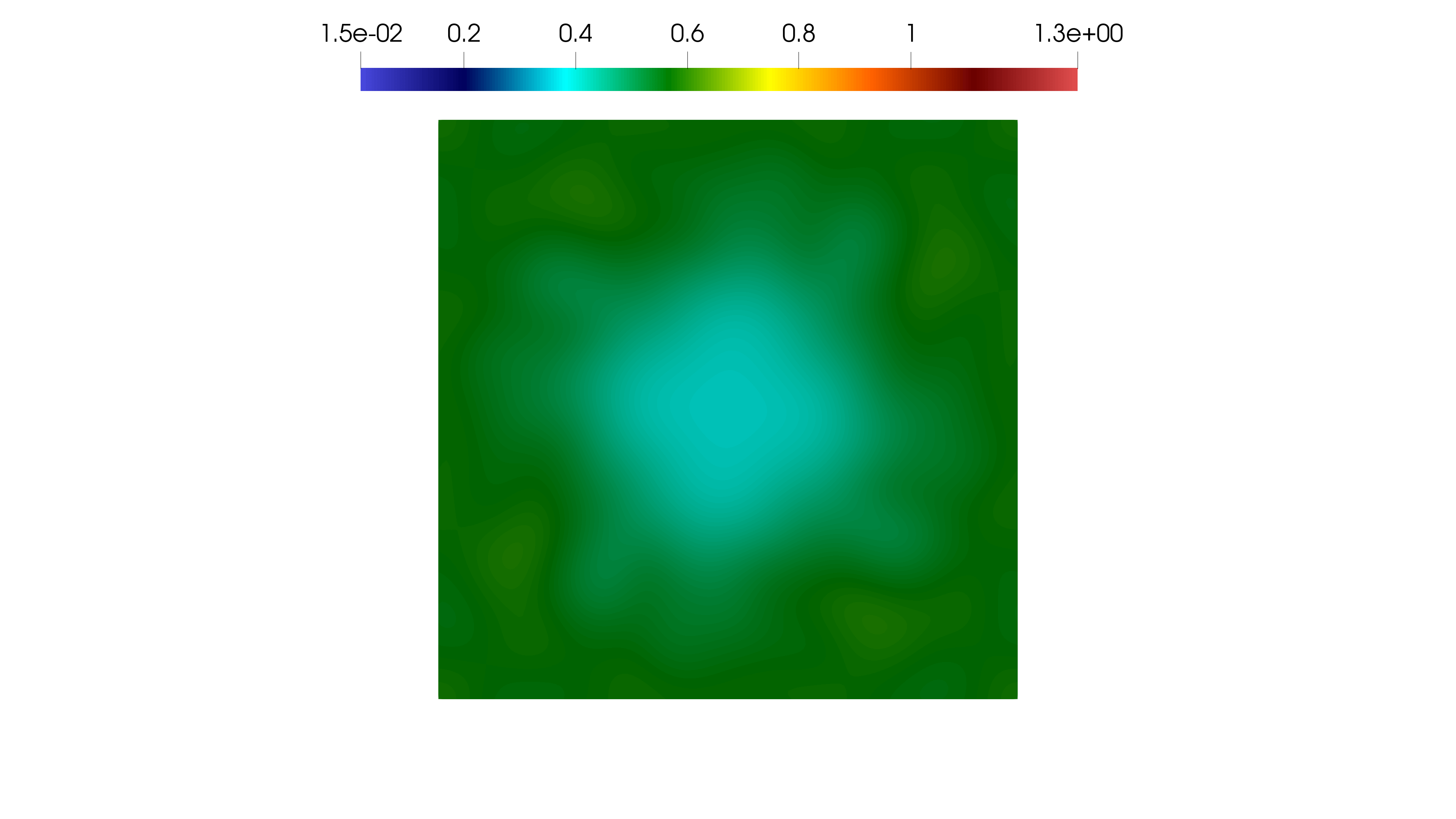} \\
   	\multicolumn{6}{c}{\includegraphics[trim={24.0cm 67.5cm 28.0cm 0.2cm},clip,scale=0.14]{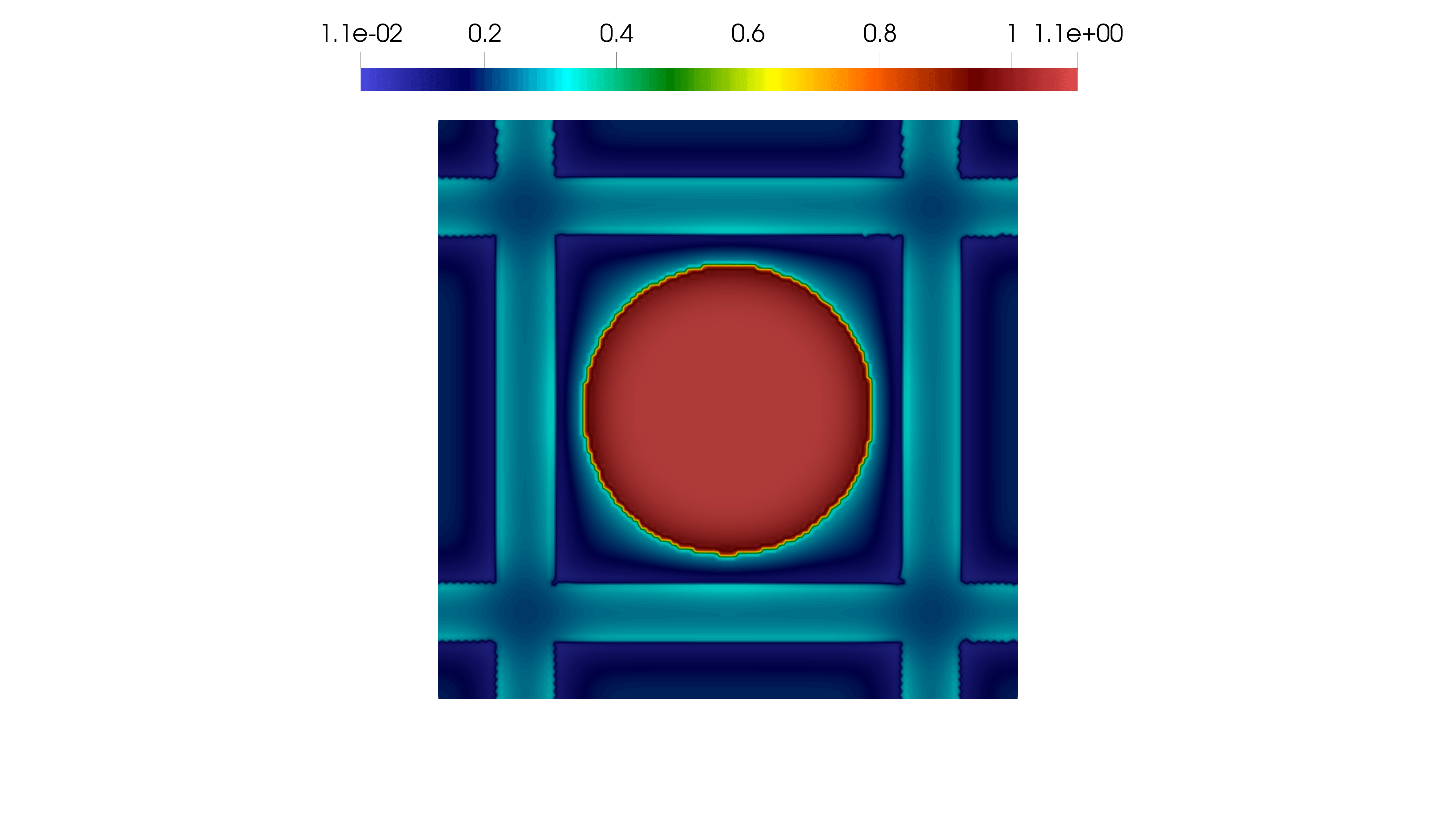}} \\[-0.5em]		\includegraphics[trim={39cm 9.4cm 39.cm 9.5cm},clip,scale=0.042]{ rho_2.0010.png} 
	&
	\includegraphics[trim={39cm 9.4cm 39.cm 9.5cm},clip,scale=0.042]{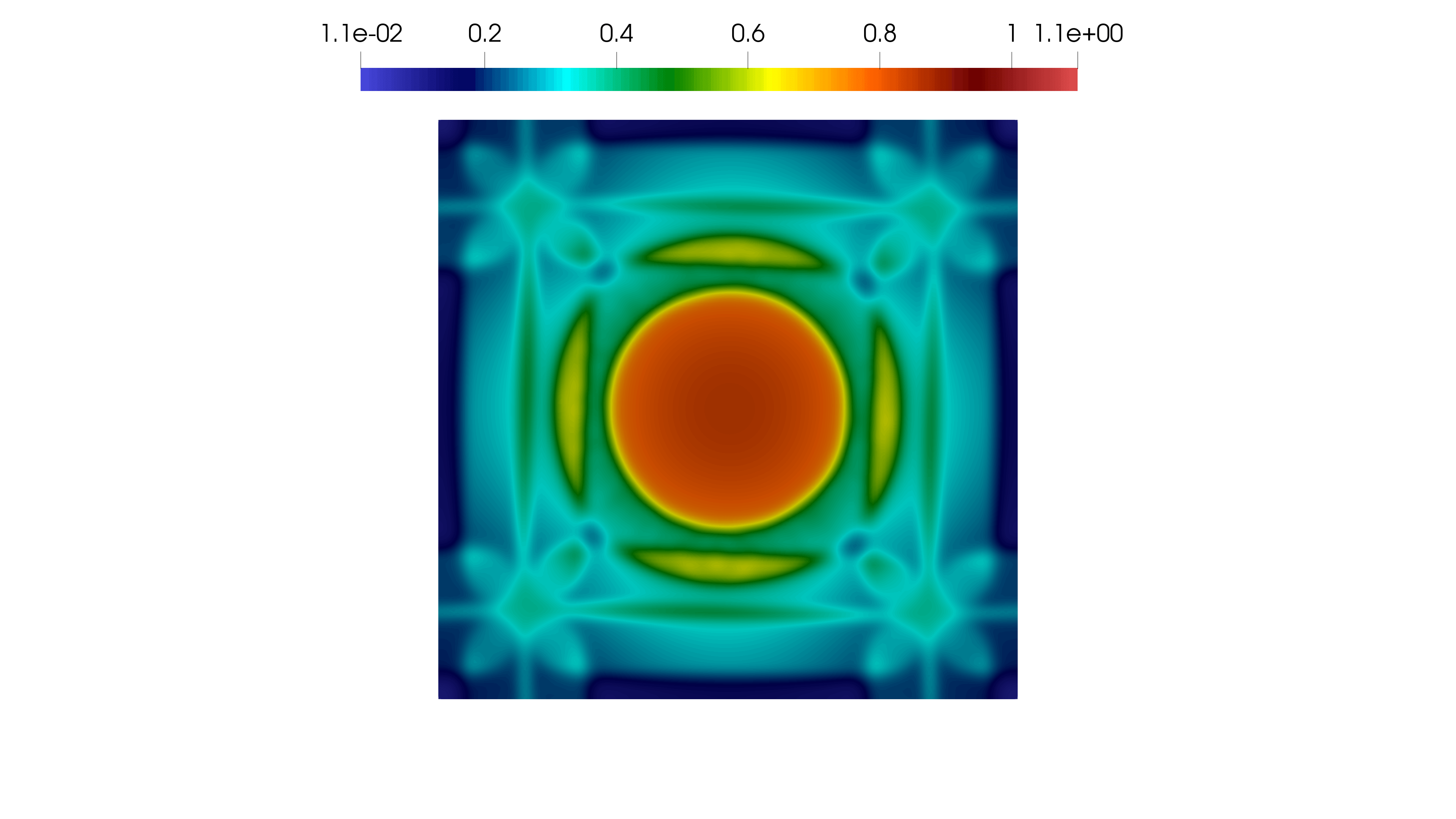} 
	&
	\includegraphics[trim={39cm 9.4cm 39.cm 9.5cm},clip,scale=0.042]{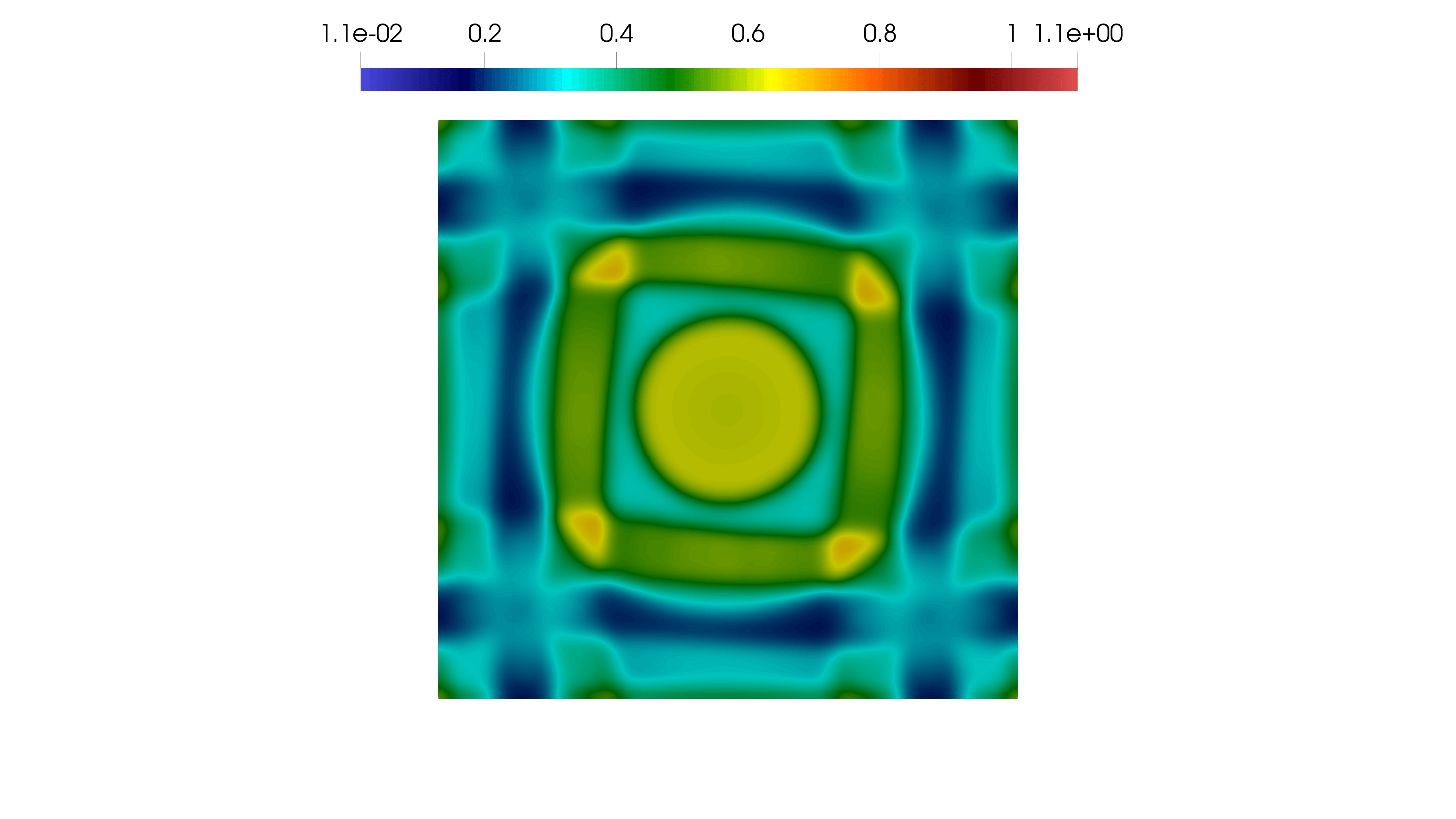}
	& 
	\includegraphics[trim={39cm 9.4cm 39.cm 9.5cm},clip,scale=0.042]{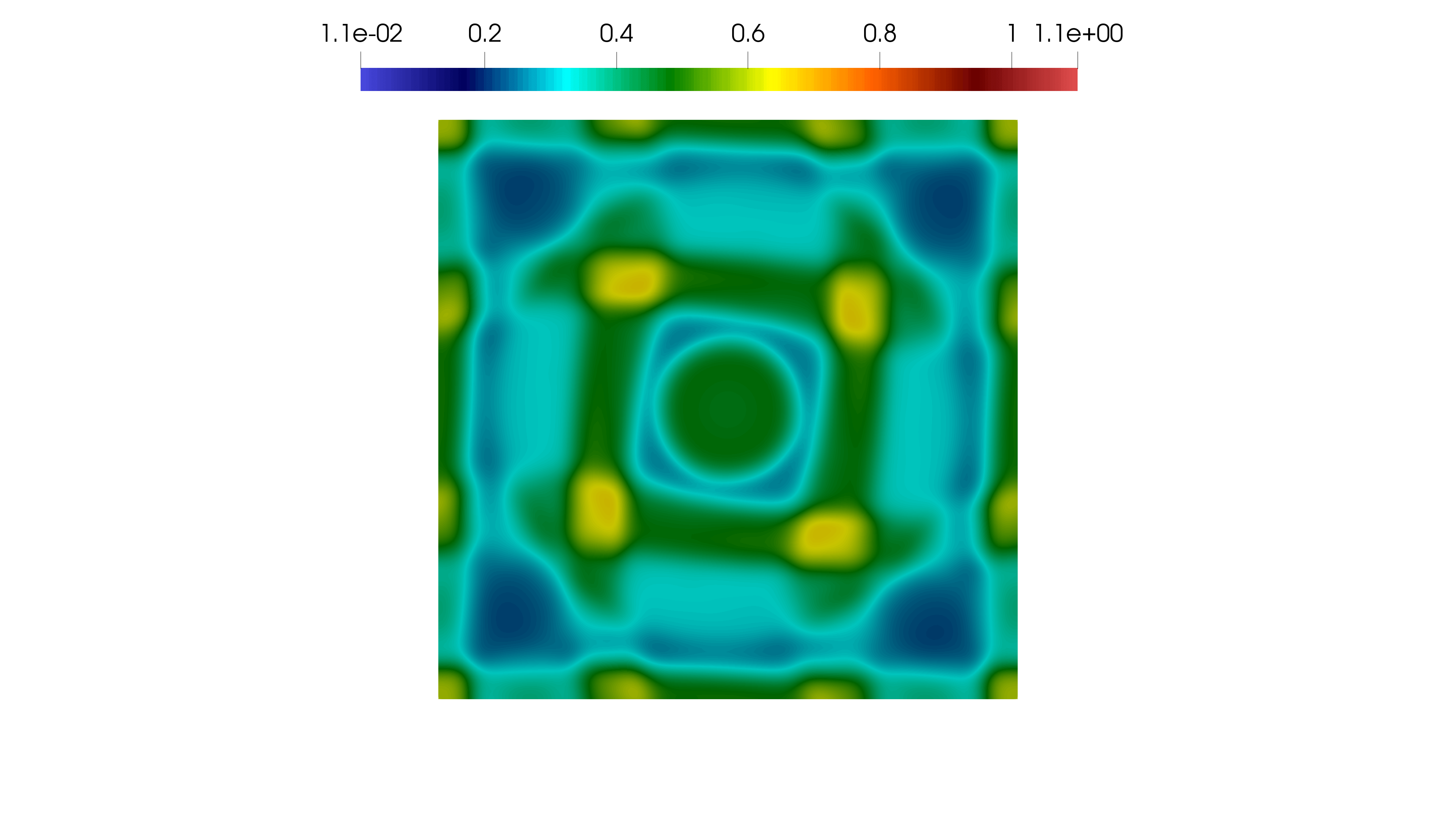} 
	&
	\includegraphics[trim={39cm 9.4cm 39.cm 9.5cm},clip,scale=0.042]{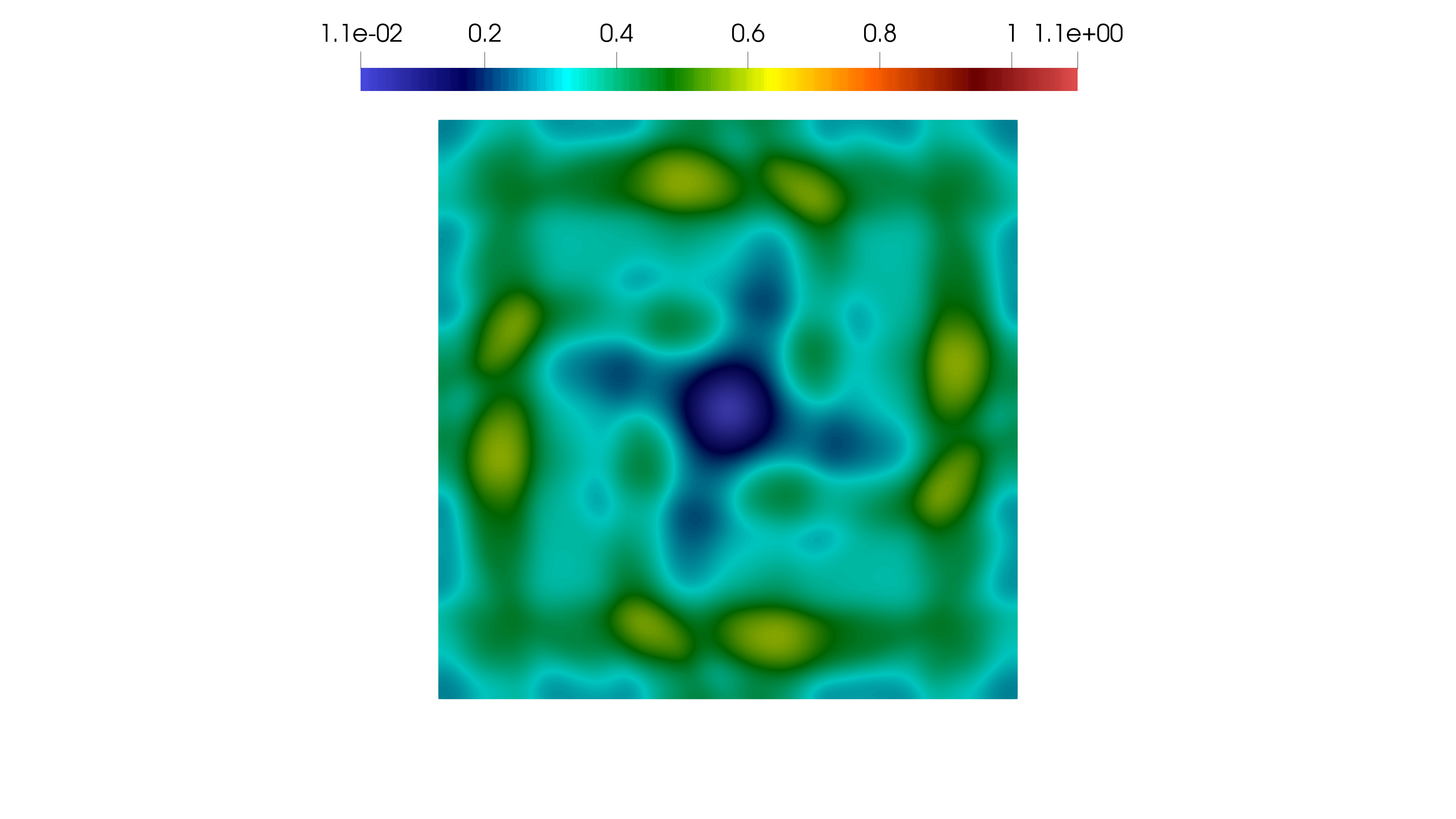}  
	&
	\includegraphics[trim={39cm 9.4cm 39.cm 9.5cm},clip,scale=0.042]{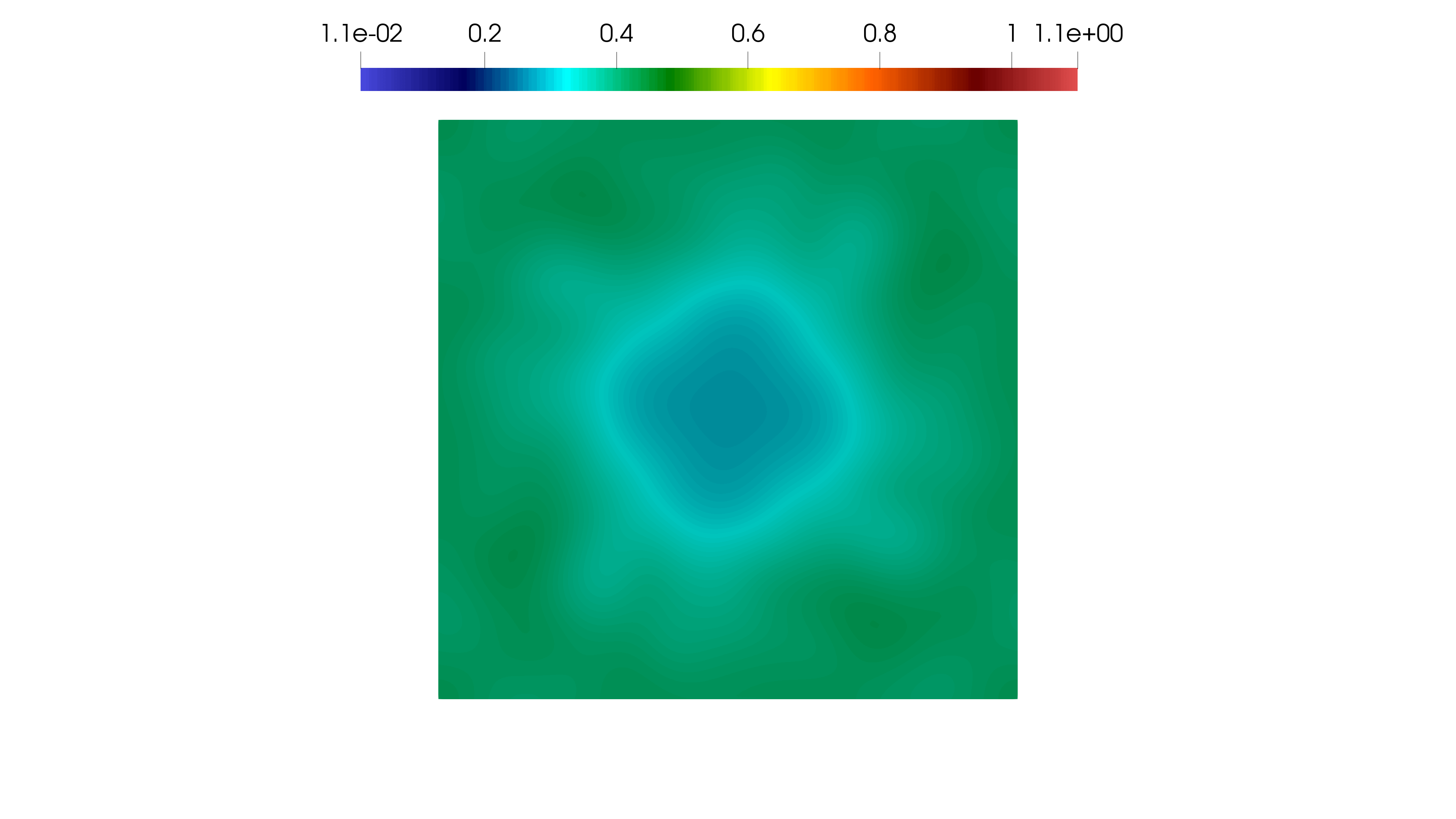}  \\
	\multicolumn{6}{c}{\includegraphics[trim={24.0cm 67.5cm 28.0cm 0.2cm},clip,scale=0.14]{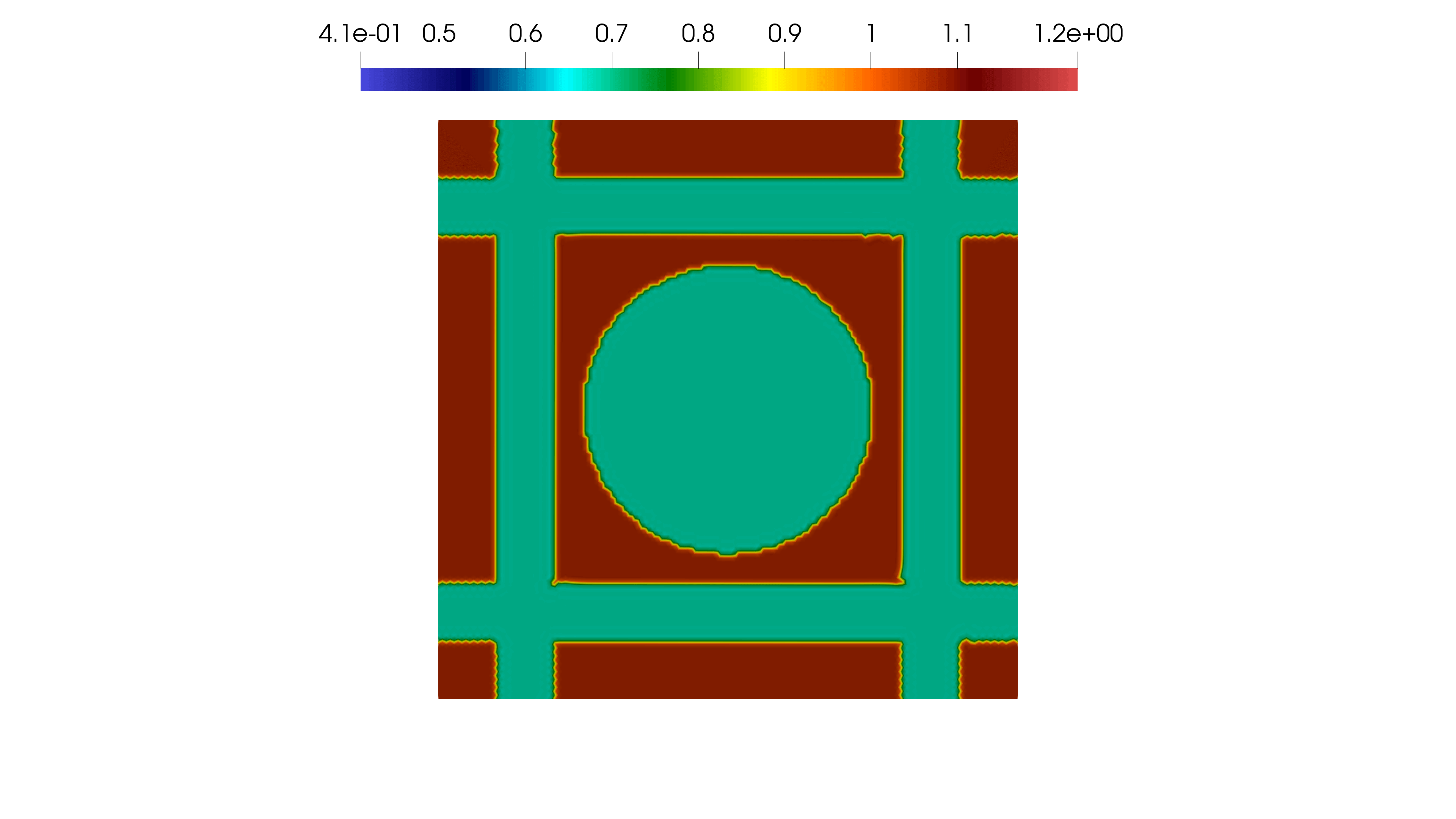}} \\[-0.5em]
	\includegraphics[trim={39cm 9.4cm 39.cm 9.5cm},clip,scale=0.042]{ rho_3.0010.png} 
	&
	\includegraphics[trim={39cm 9.4cm 39.cm 9.5cm},clip,scale=0.042]{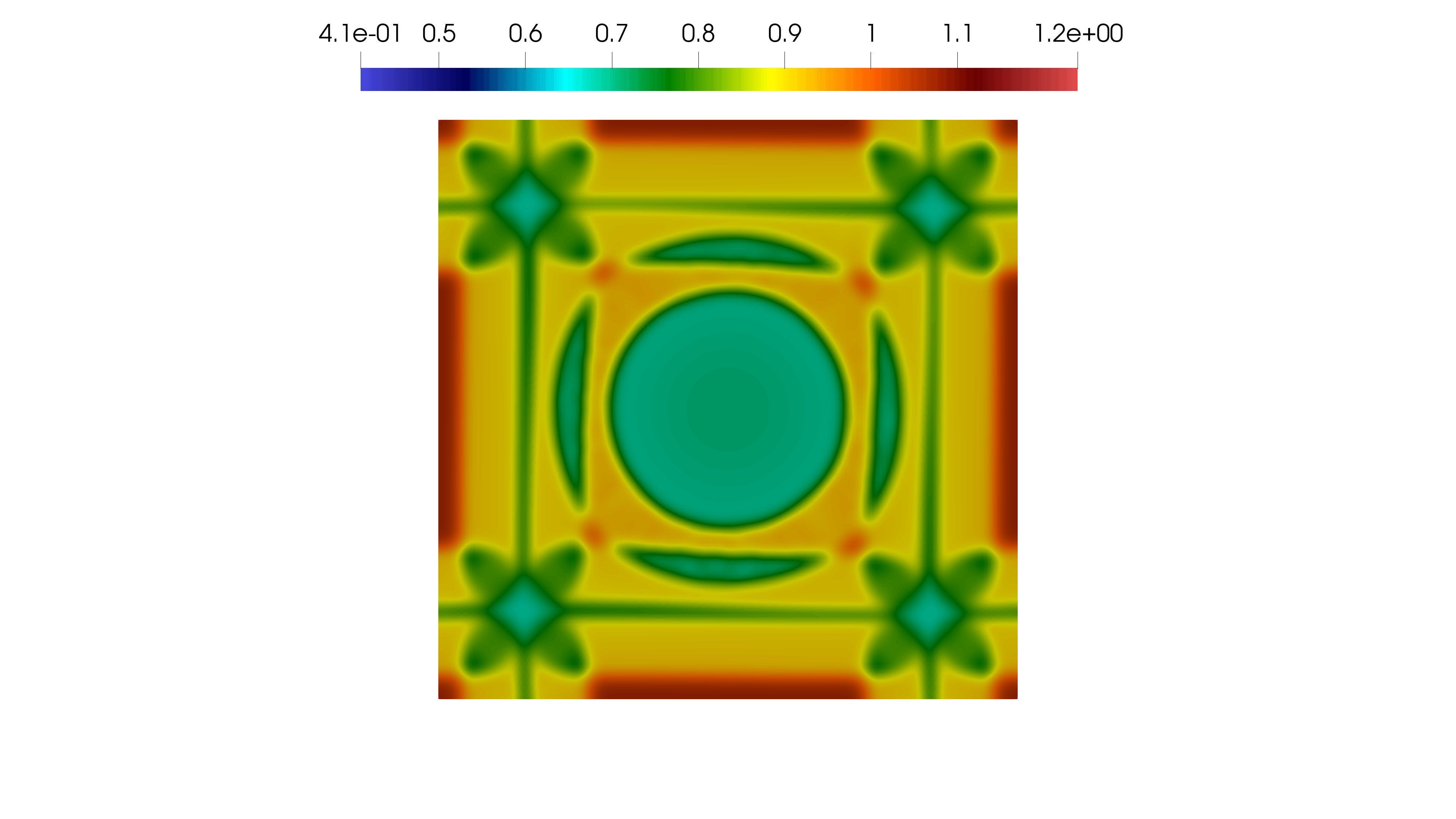} 
	&
	\includegraphics[trim={39cm 9.4cm 39.cm 9.5cm},clip,scale=0.042]{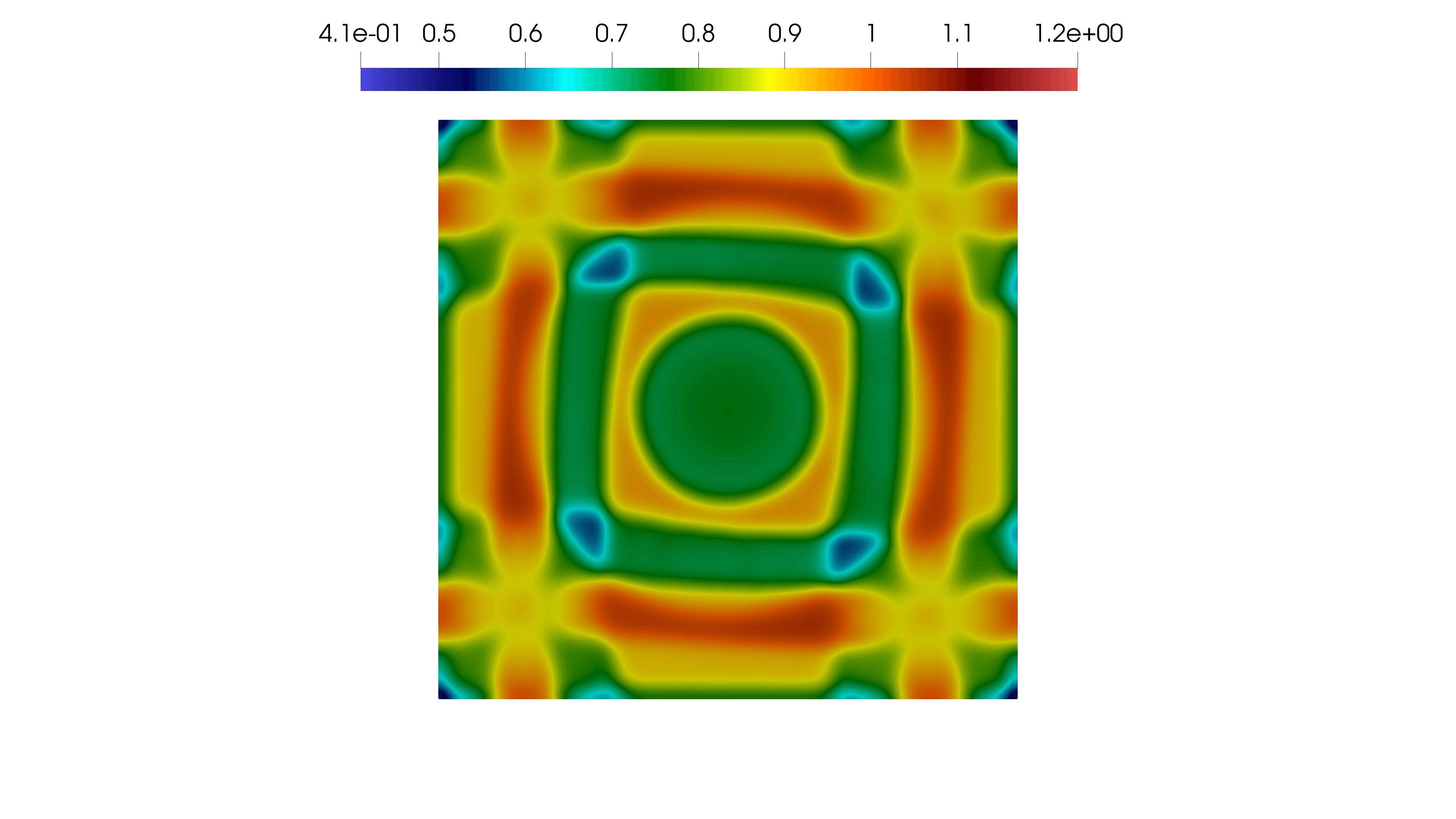}
	& 
	\includegraphics[trim={39cm 9.4cm 39.cm 9.5cm},clip,scale=0.042]{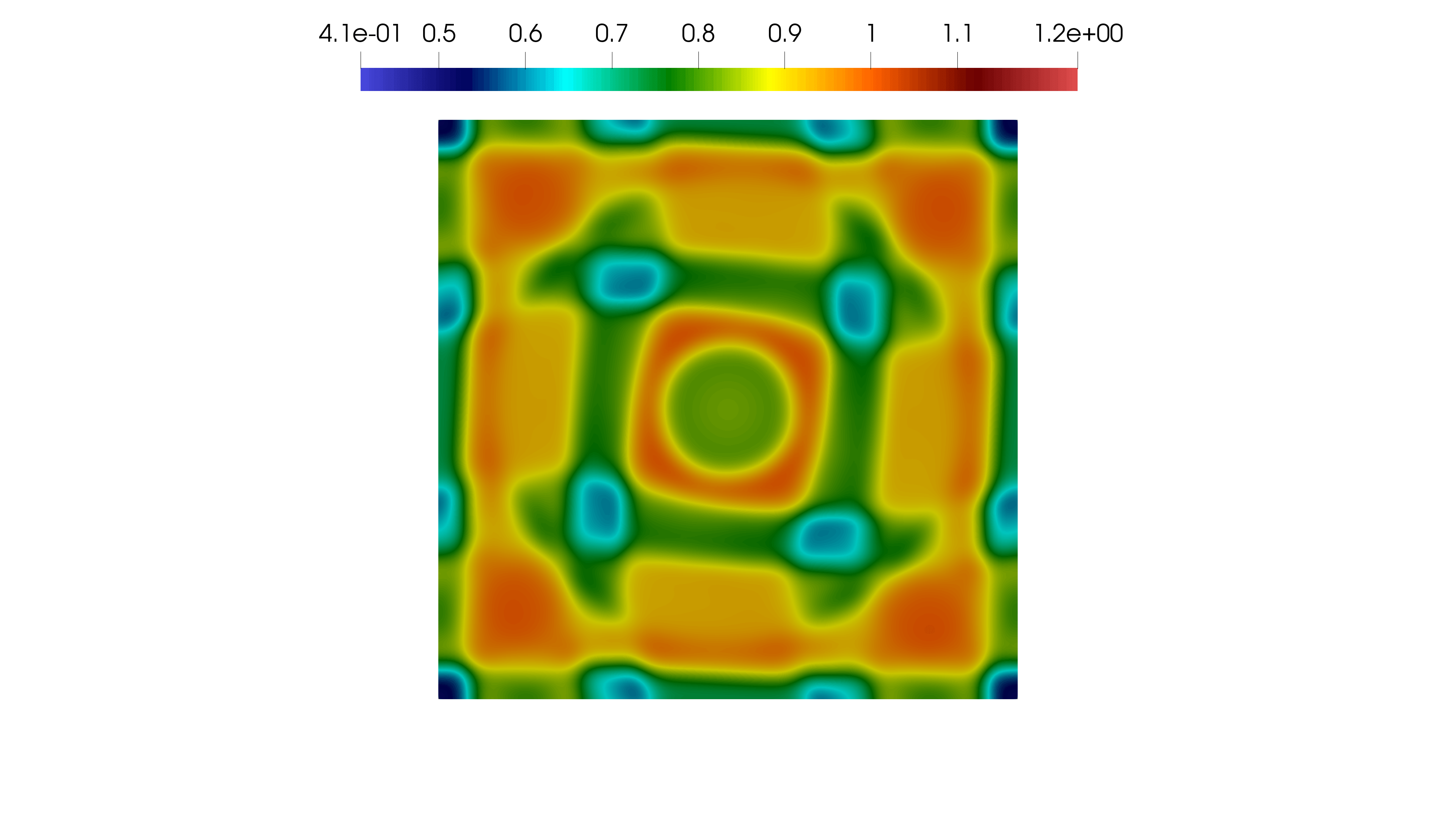} 
	&
	\includegraphics[trim={39cm 9.4cm 39.cm 9.5cm},clip,scale=0.042]{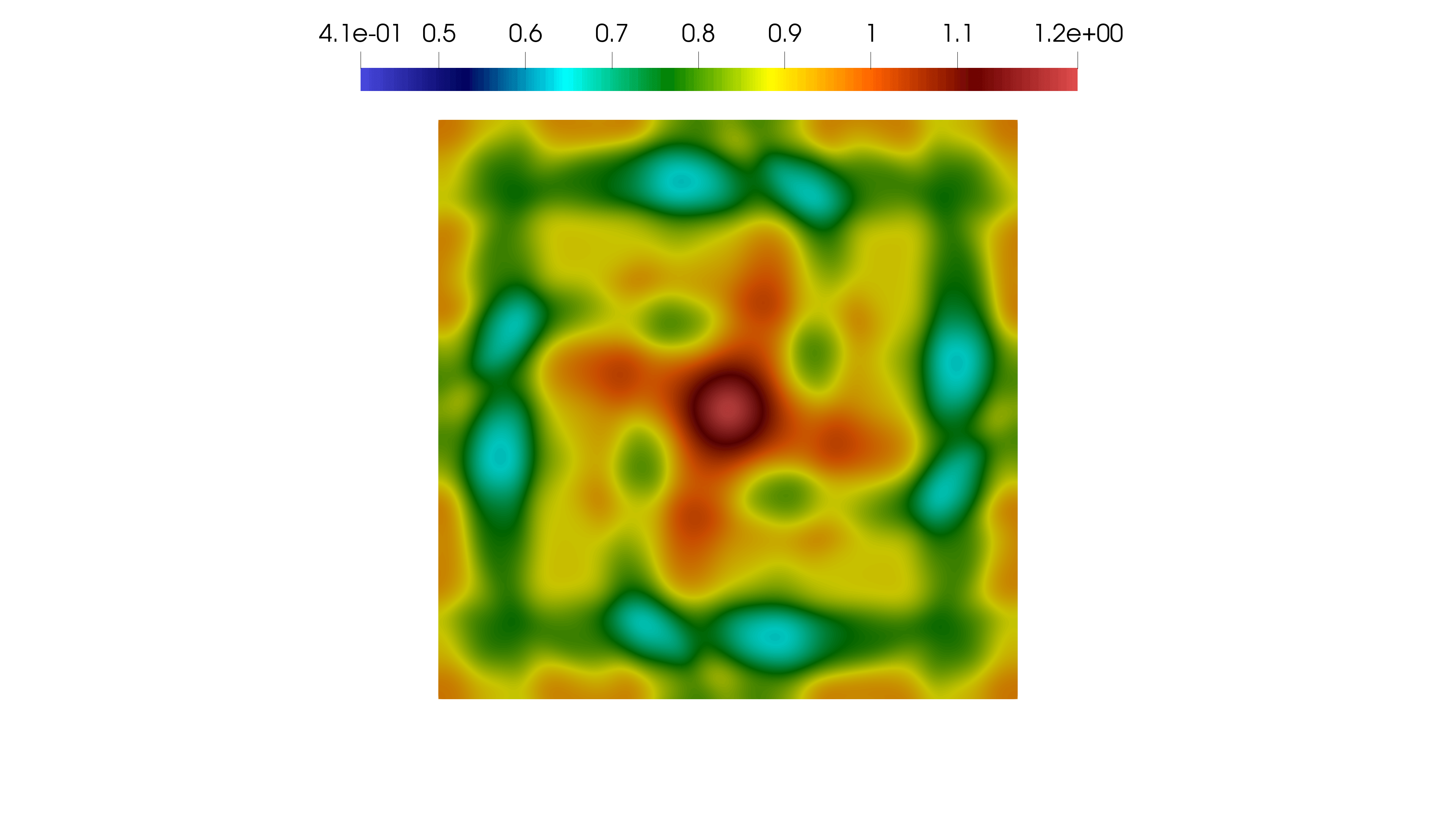}  
	&
	\includegraphics[trim={39cm 9.4cm 39.cm 9.5cm},clip,scale=0.042]{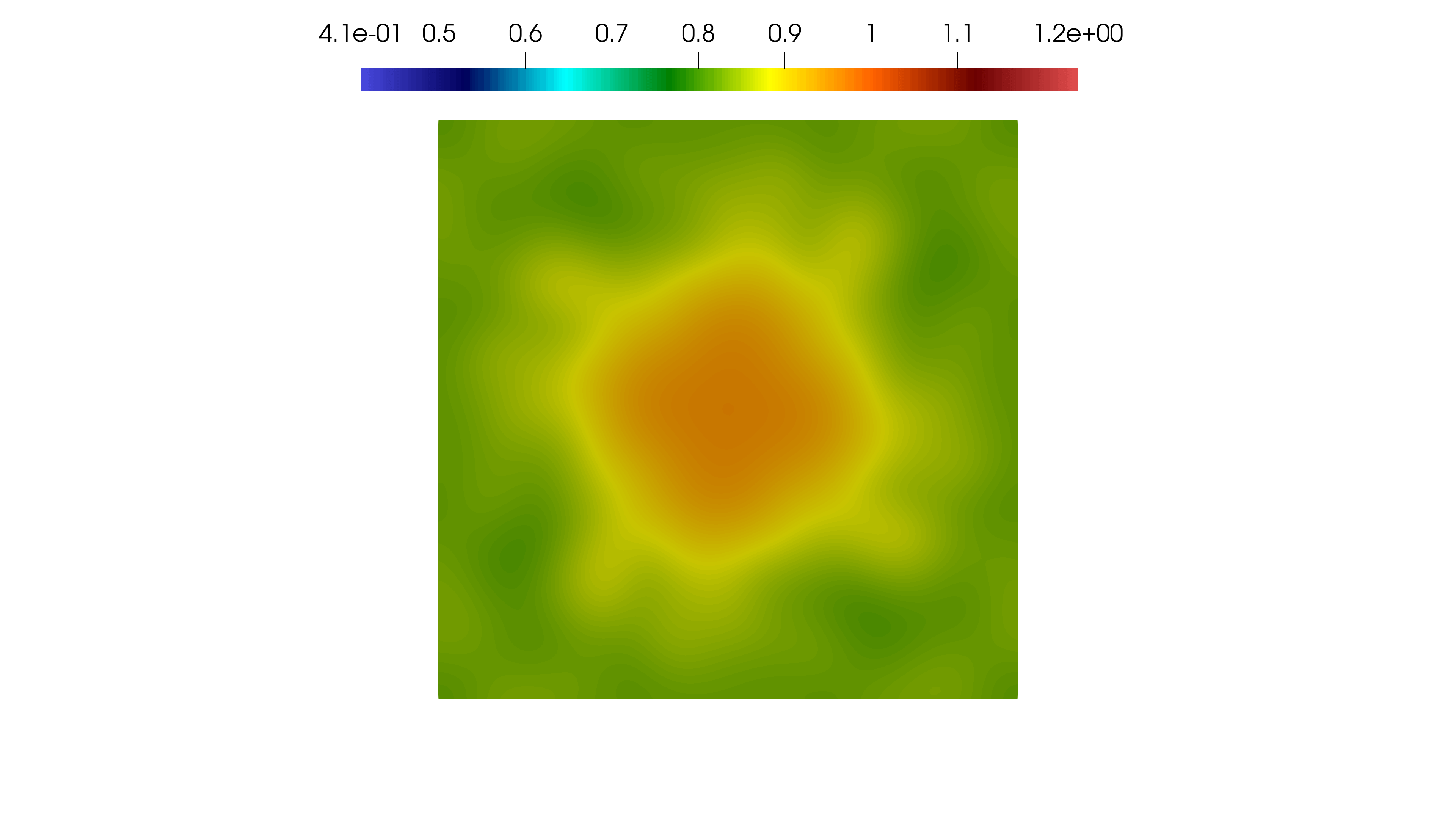}
\end{tabular}
\caption{Snapshots of the partial mass densities $\rho_1$ (upper row), $\rho_2$ (middle row), and $\rho_3$ (lower row) at times $t=0.001,0.02,0.04,0.06, 0.1,0.5$ for the experiment in Section \ref{subsec:experiment}.}
\label{fig.rho}
\end{figure}

\begin{figure}[ht]
\centering
%\footnotesize
\begin{tabular}{c@{}c@{}c@{}c@{}c@{}c}
	\multicolumn{6}{c}{\includegraphics[trim={24.0cm 67.5cm 24.0cm 0.2cm},clip,scale=0.14]{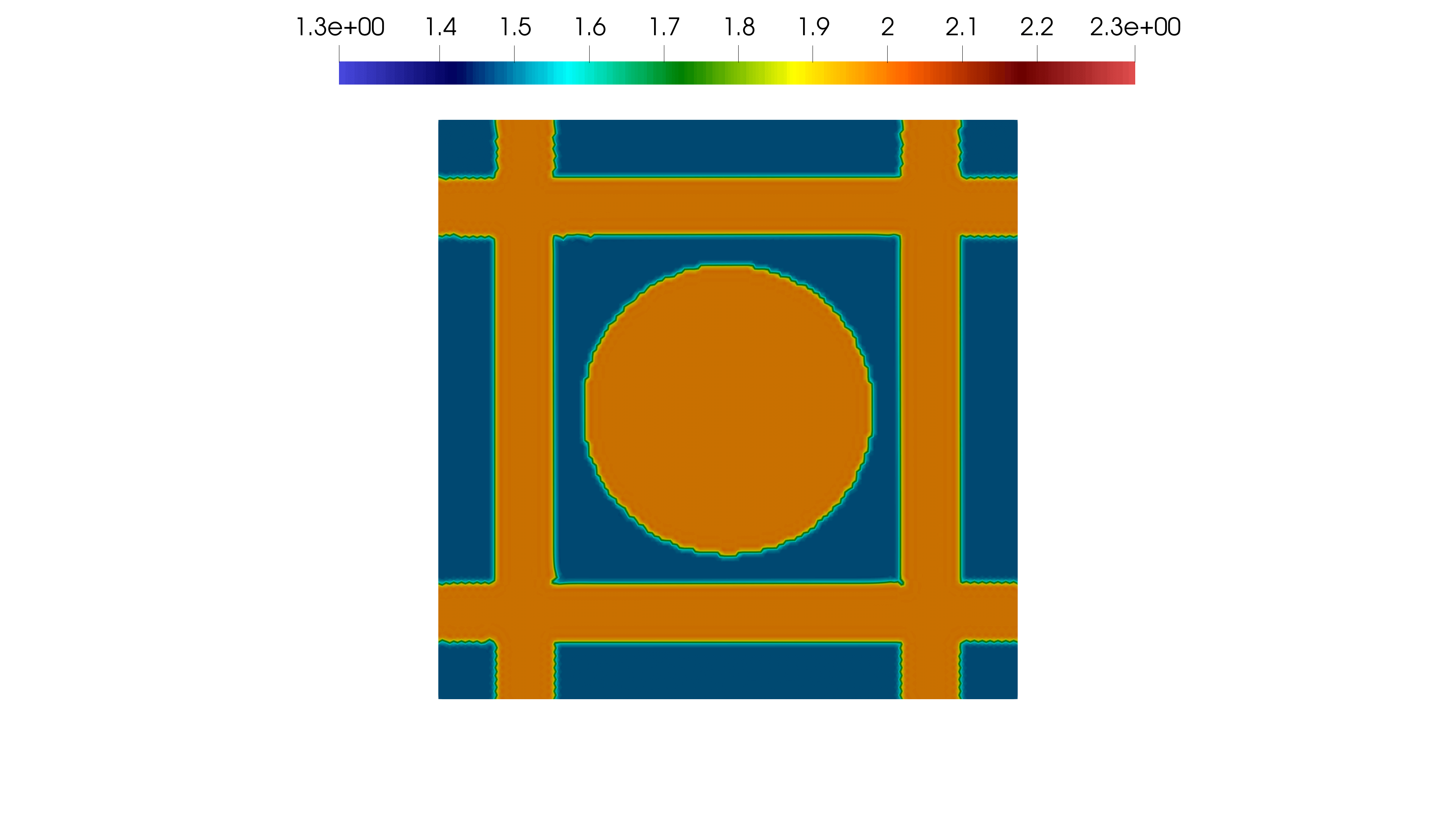}} \\[-0.5em]
	\includegraphics[trim={41cm 9.4cm 39.cm 9.5cm},clip,scale=0.042]{ rho_full.0010.png} 
	&
	\includegraphics[trim={39cm 9.4cm 39.cm 9.5cm},clip,scale=0.042]{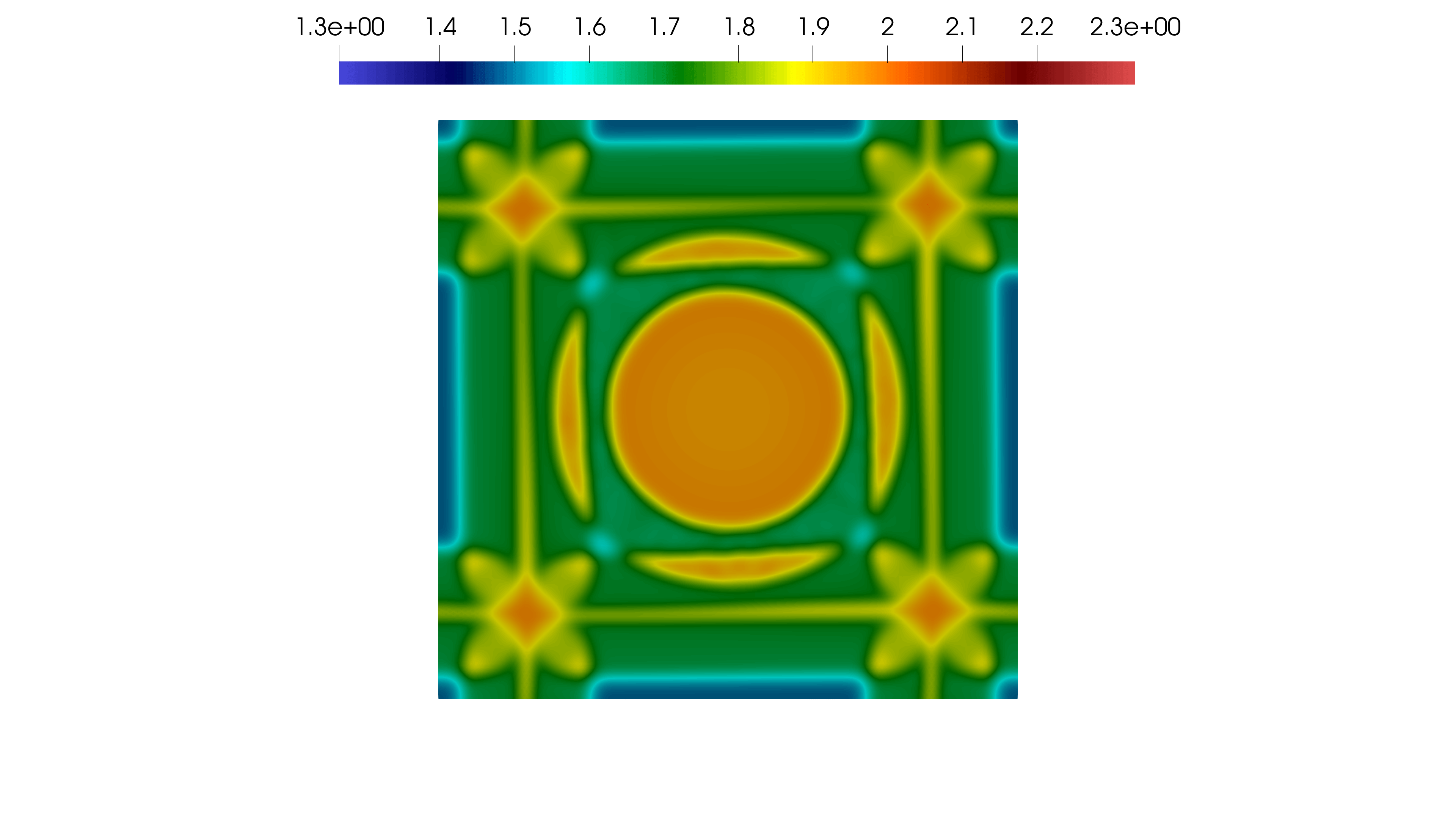} 
	&
	\includegraphics[trim={39cm 9.4cm 39.cm 9.5cm},clip,scale=0.042]{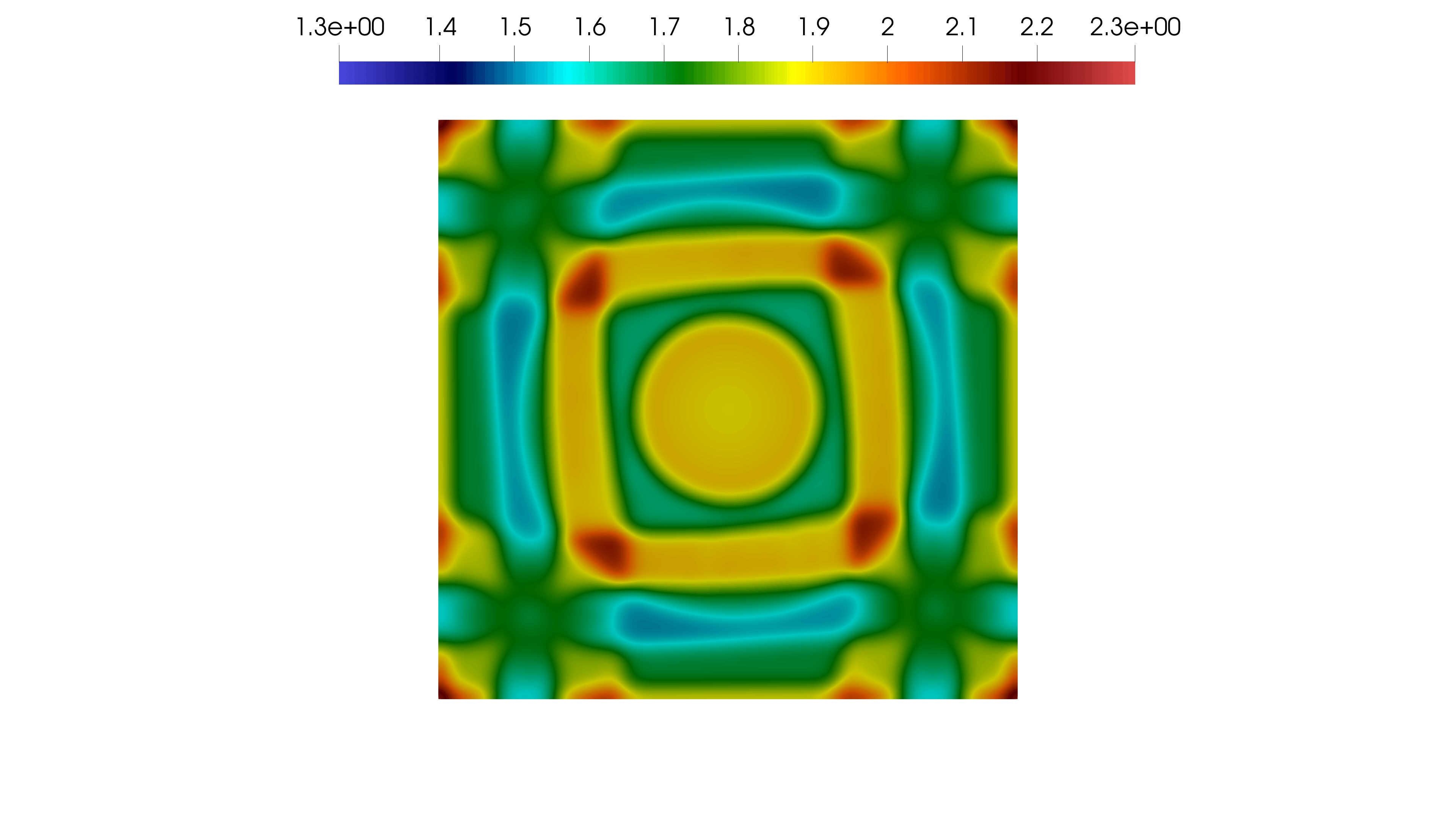}
	& 
	\includegraphics[trim={39cm 9.4cm 39.cm 9.5cm},clip,scale=0.042]{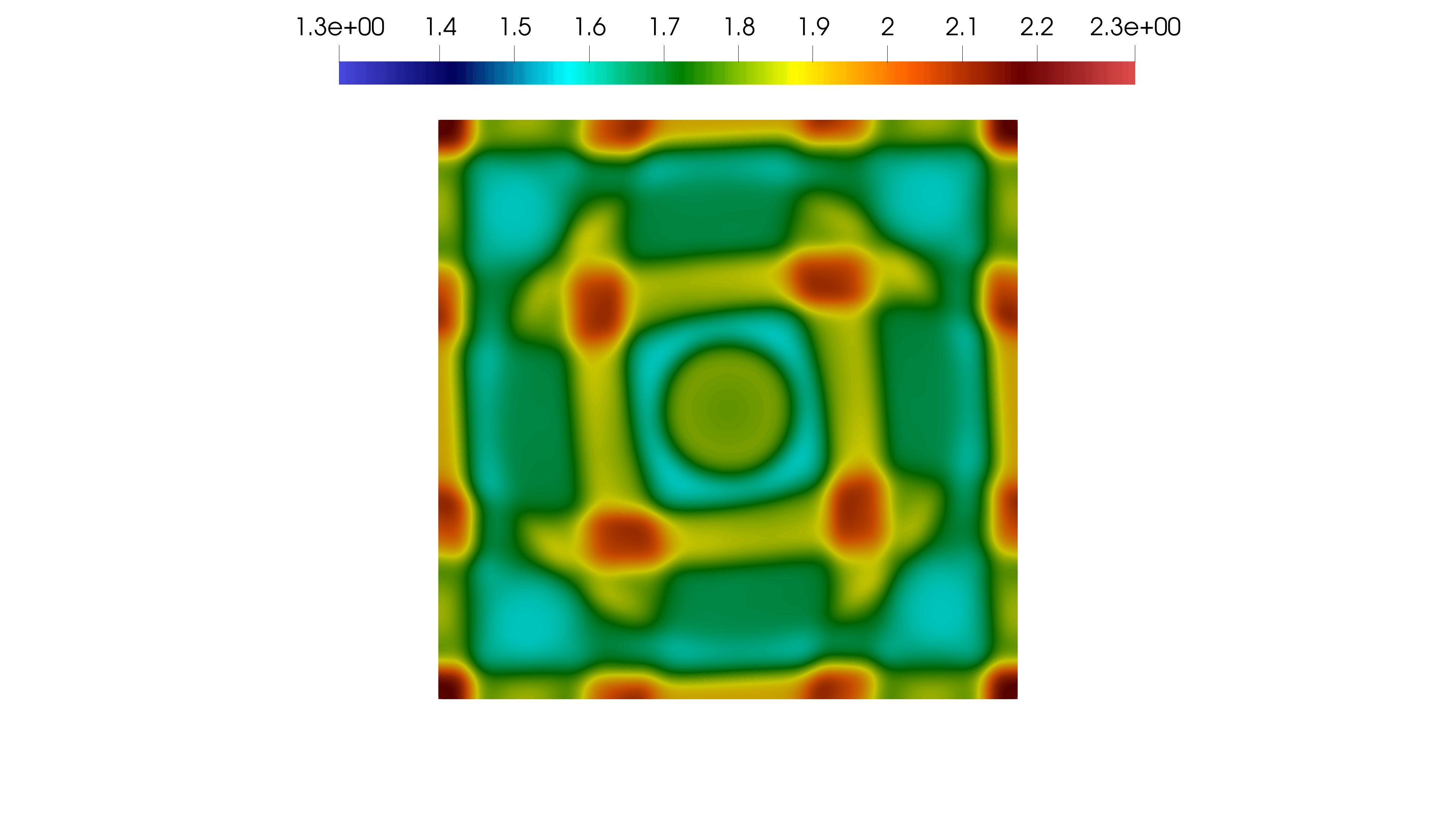} 
	&
	\includegraphics[trim={39cm 9.4cm 39.cm 9.5cm},clip,scale=0.042]{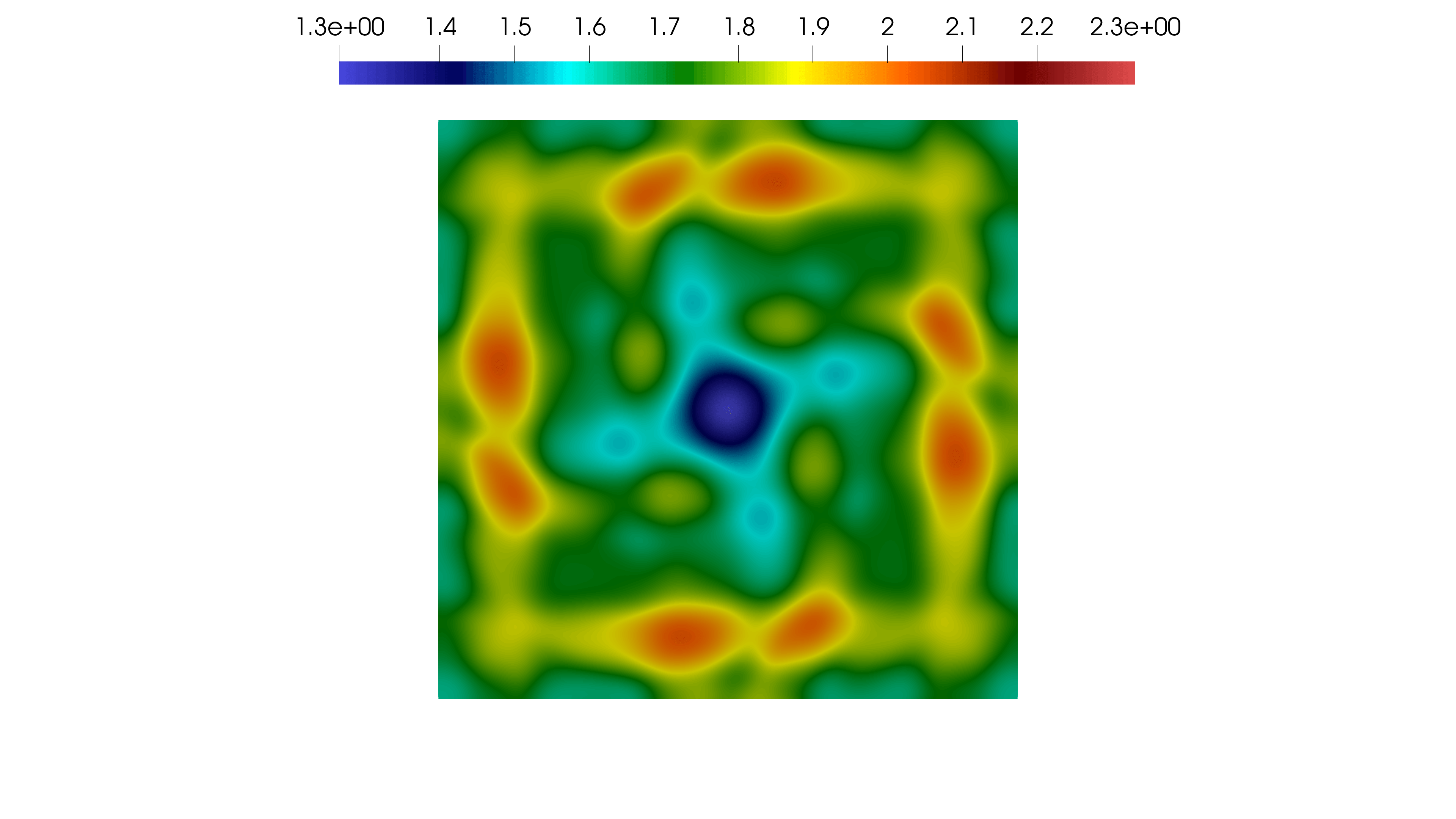}  
	&
	\includegraphics[trim={39cm 9.4cm 39.cm 9.5cm},clip,scale=0.042]{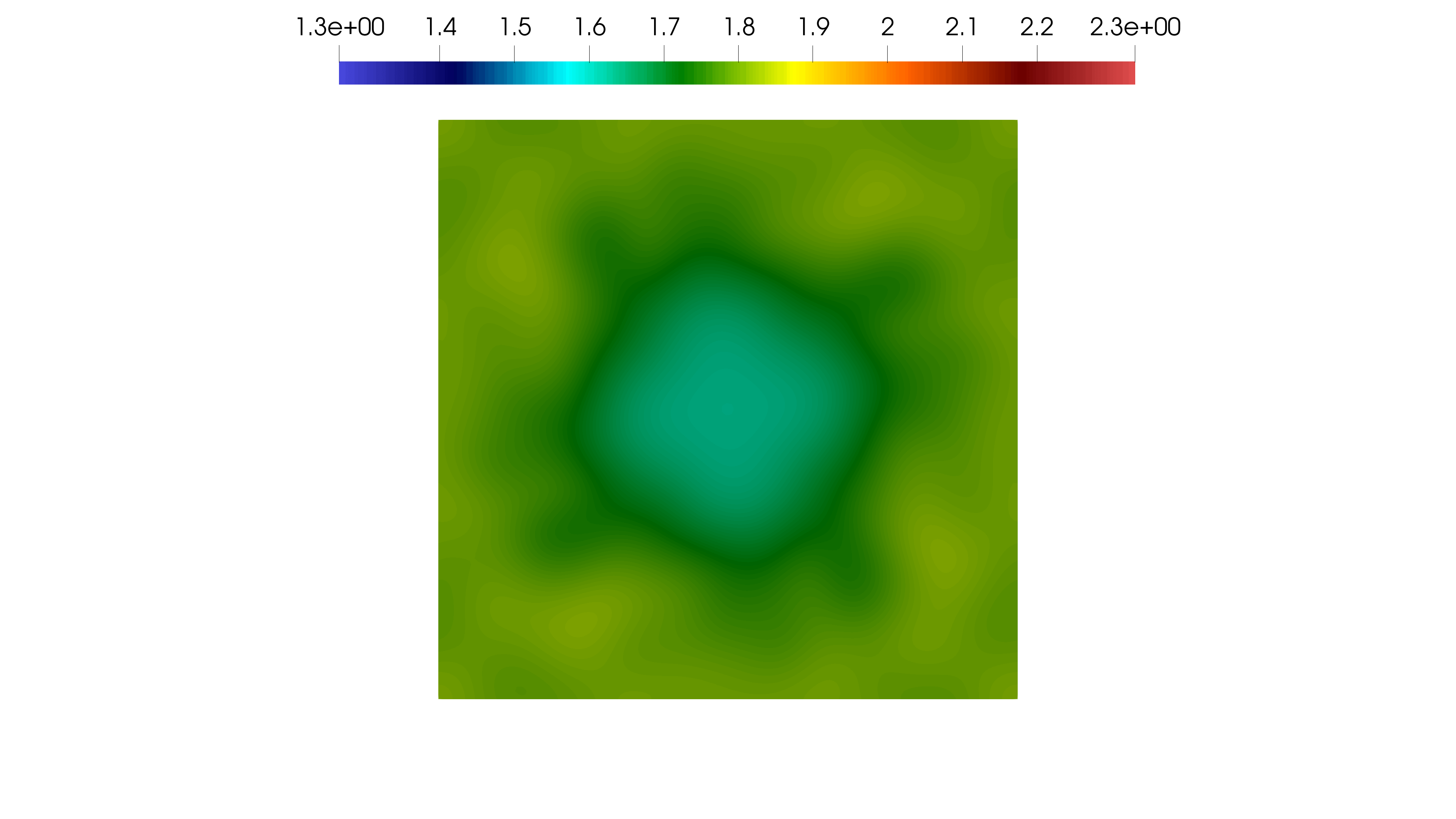}  \\
	\multicolumn{6}{c}{\includegraphics[trim={24.0cm 67.5cm 24.0cm 0.2cm},clip,scale=0.14]{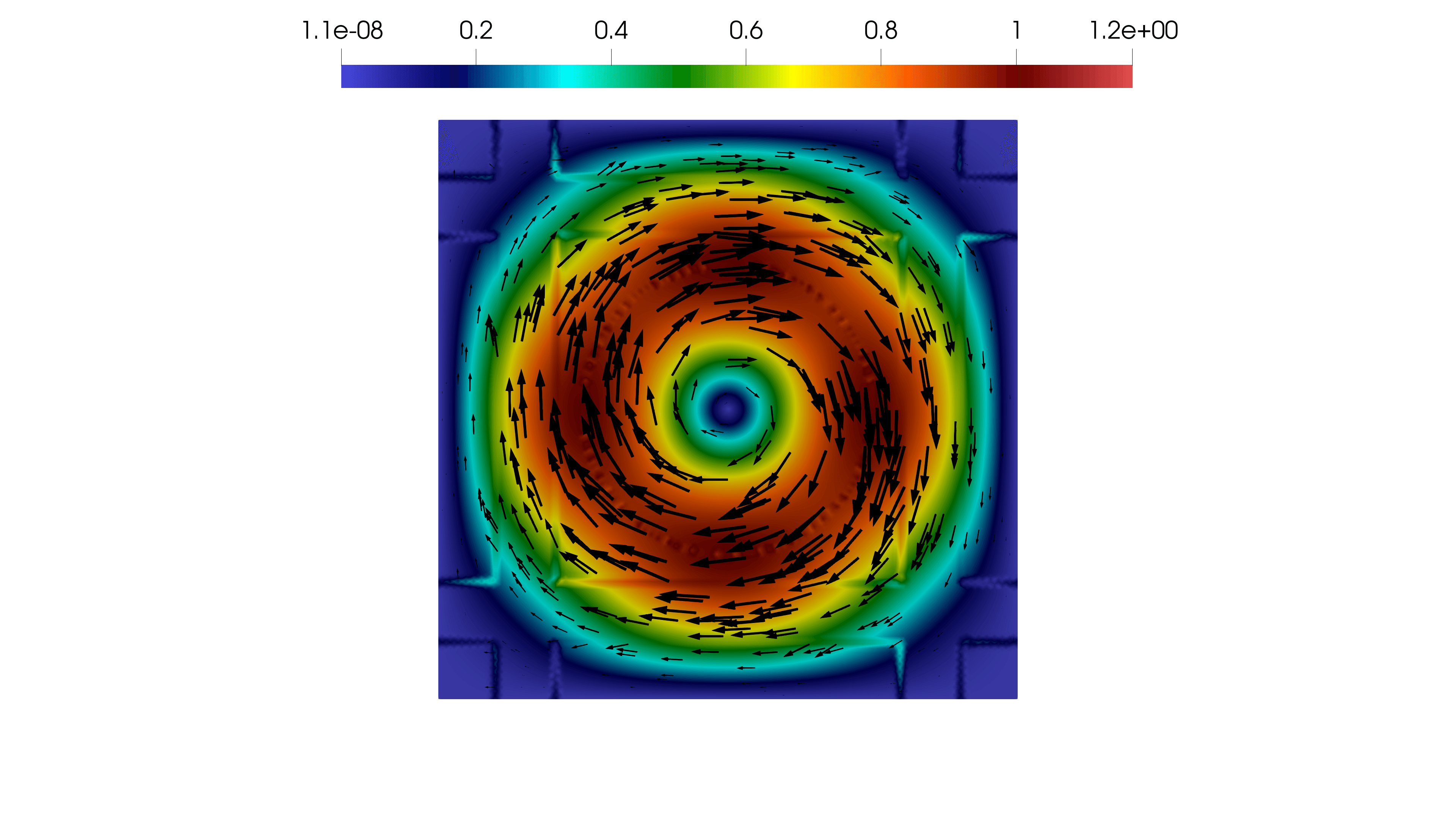}} \\[-0.5em]
	\includegraphics[trim={39cm 9.4cm 39.cm 9.5cm},clip,scale=0.042]{ Velofield.0010.png}
	&
	\includegraphics[trim={39cm 9.4cm 39.cm 9.5cm},clip,scale=0.042]{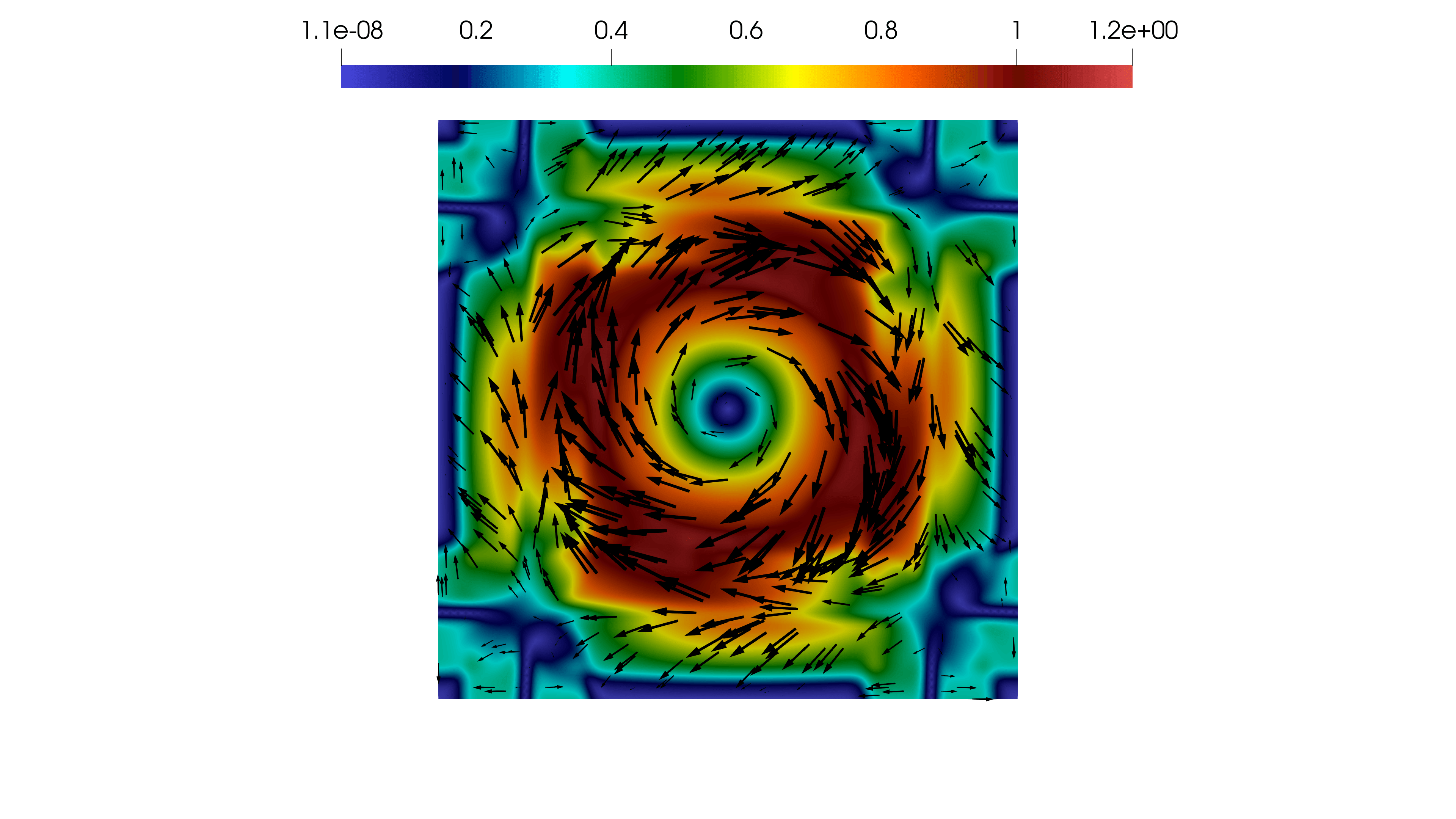} 
	&
	\includegraphics[trim={39cm 9.4cm 39.cm 9.5cm},clip,scale=0.042]{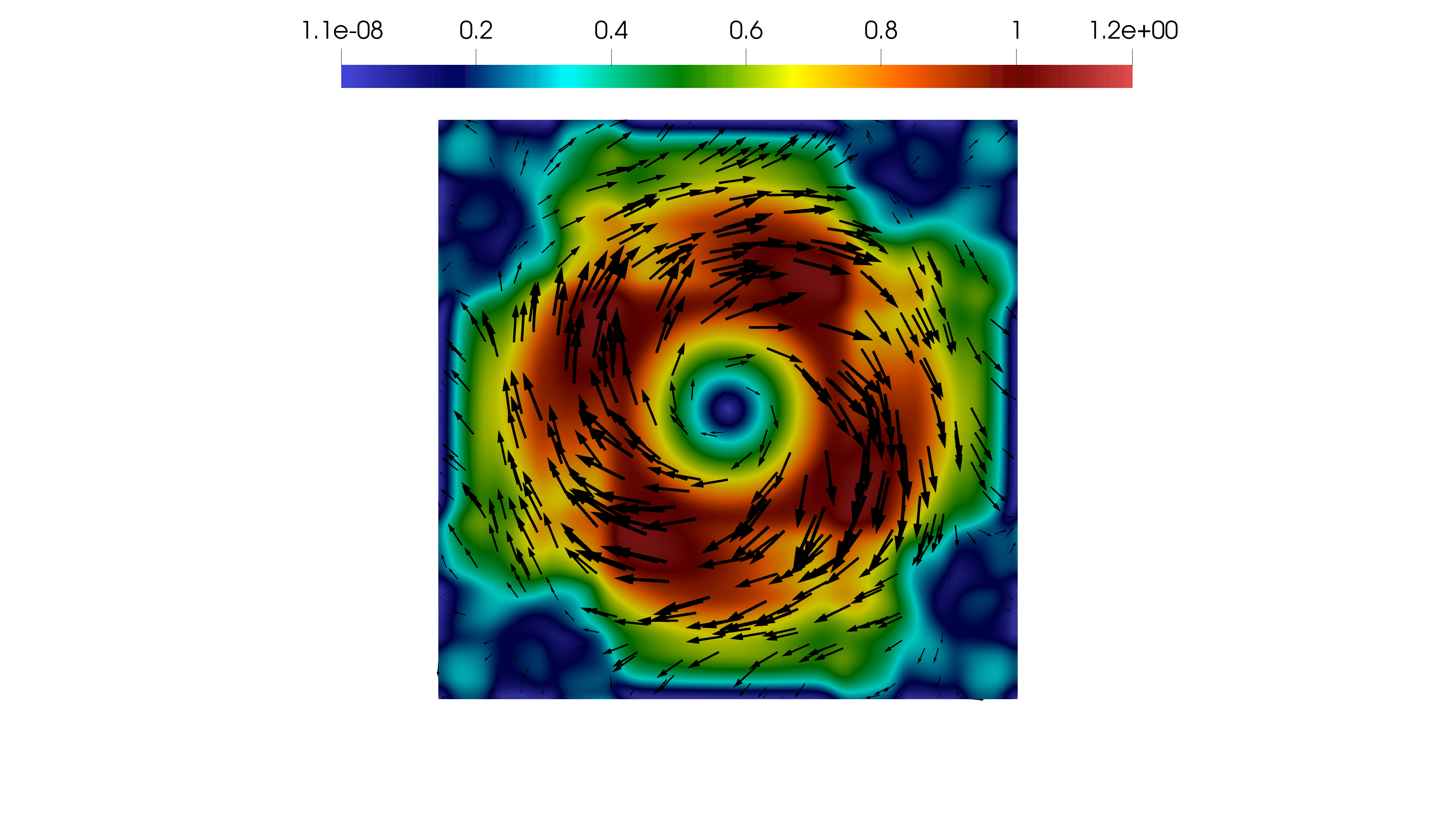}
	&
	\includegraphics[trim={39cm 9.4cm 39.cm 9.5cm},clip,scale=0.042]{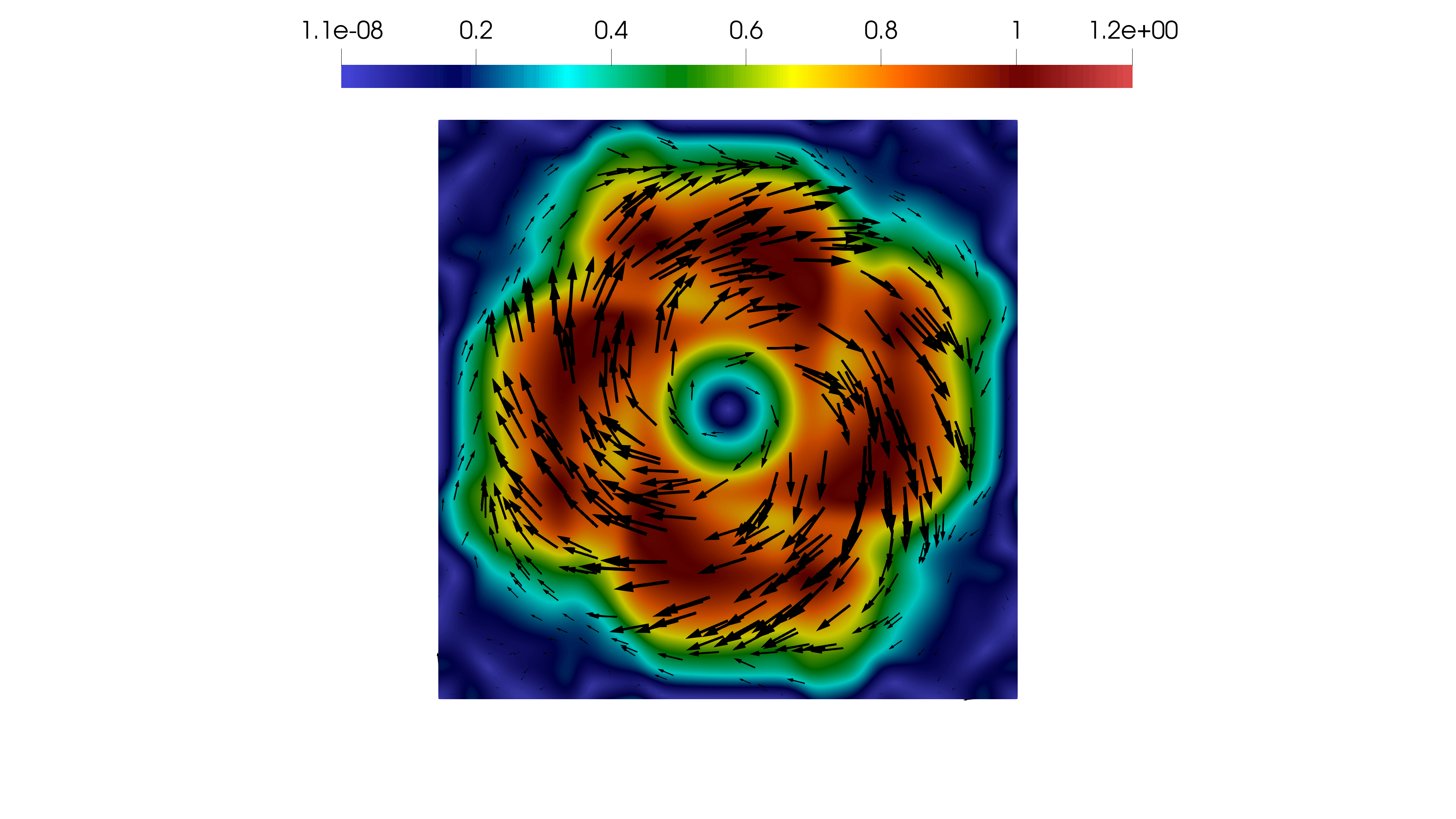}
	&
	\includegraphics[trim={39cm 9.4cm 39.cm 9.5cm},clip,scale=0.042]{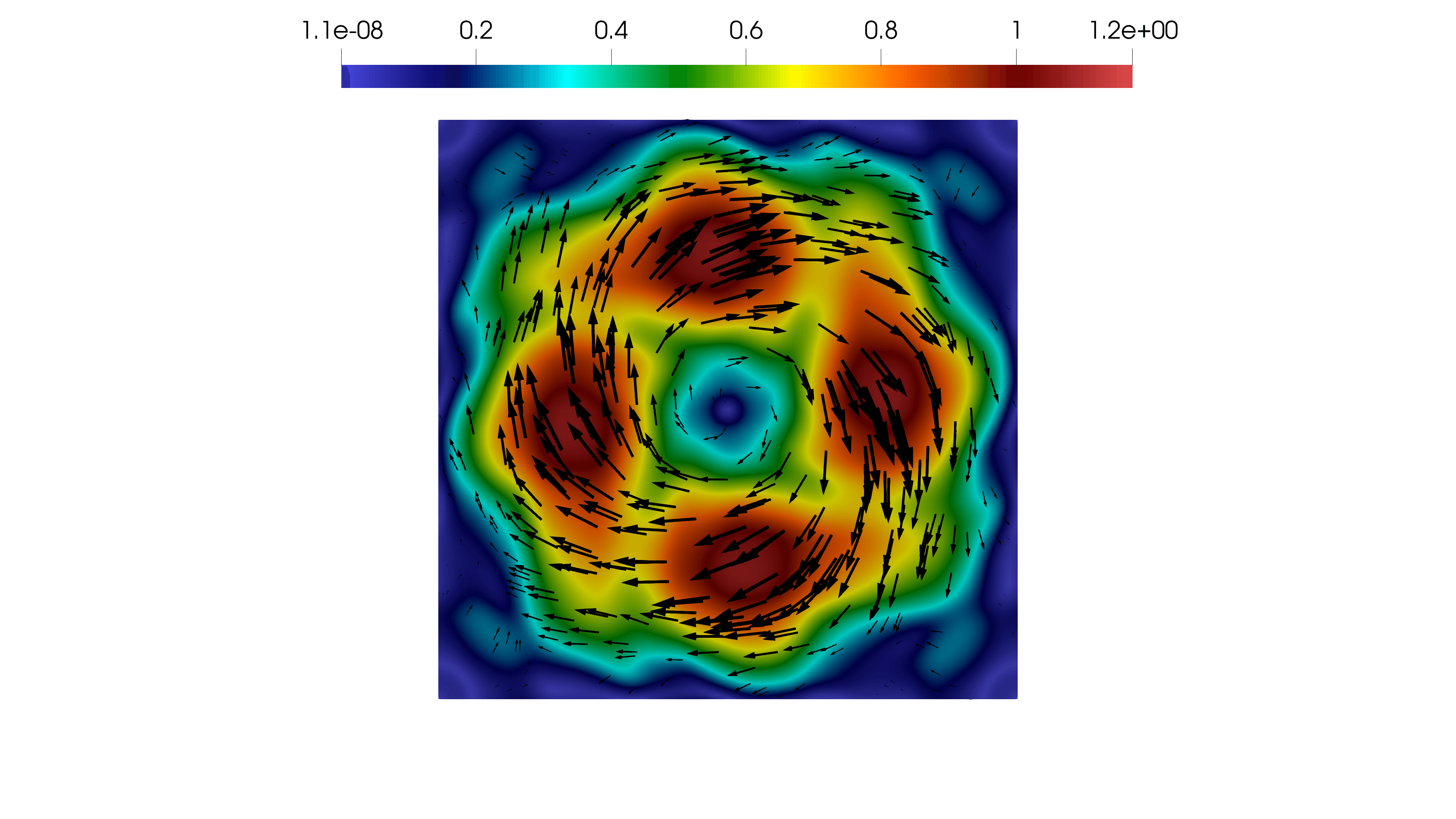} 
	&
	\includegraphics[trim={39cm 9.4cm 39.cm 9.5cm},clip,scale=0.042]{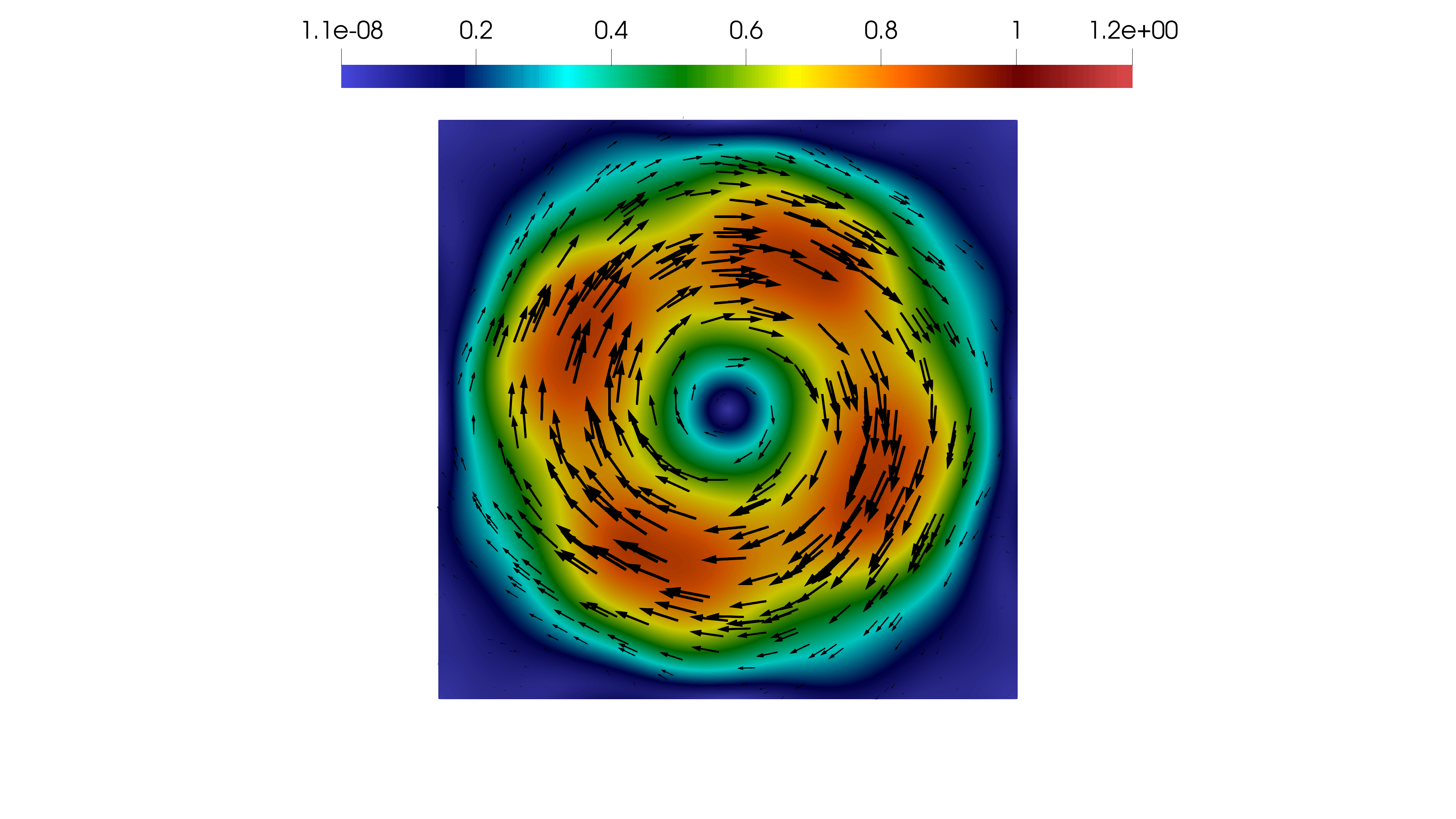}\\[-0.5em]
	\multicolumn{6}{c}{\includegraphics[trim={24.0cm 67.5cm 24.0cm 0.2cm},clip,scale=0.14]{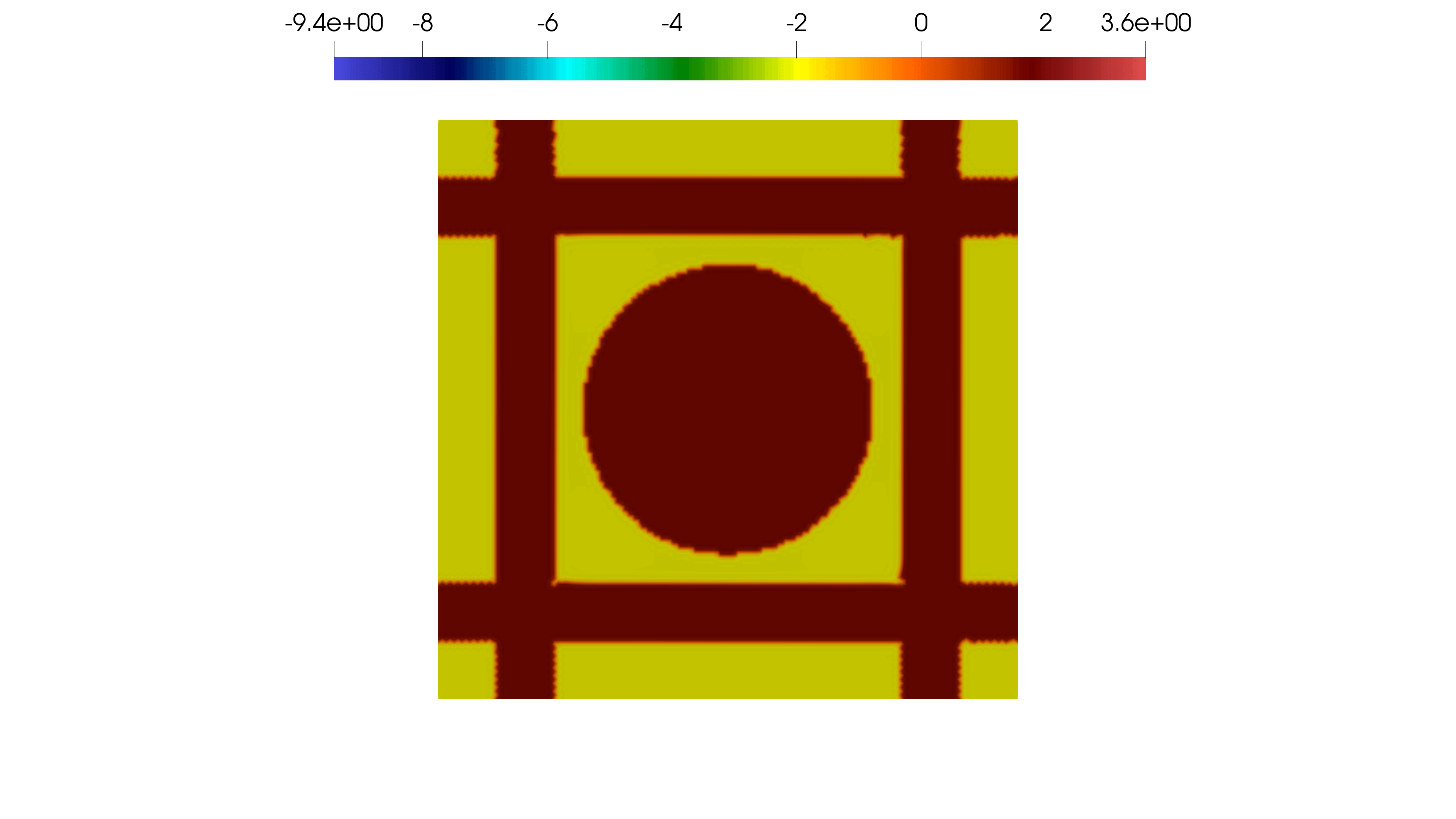}} \\[-0.5em]
	\includegraphics[trim={39cm 9.4cm 39.cm 9.5cm},clip,scale=0.042]{ pressure.0010.png} 
	& 
	\includegraphics[trim={39cm 9.4cm 39.cm 9.5cm},clip,scale=0.042]{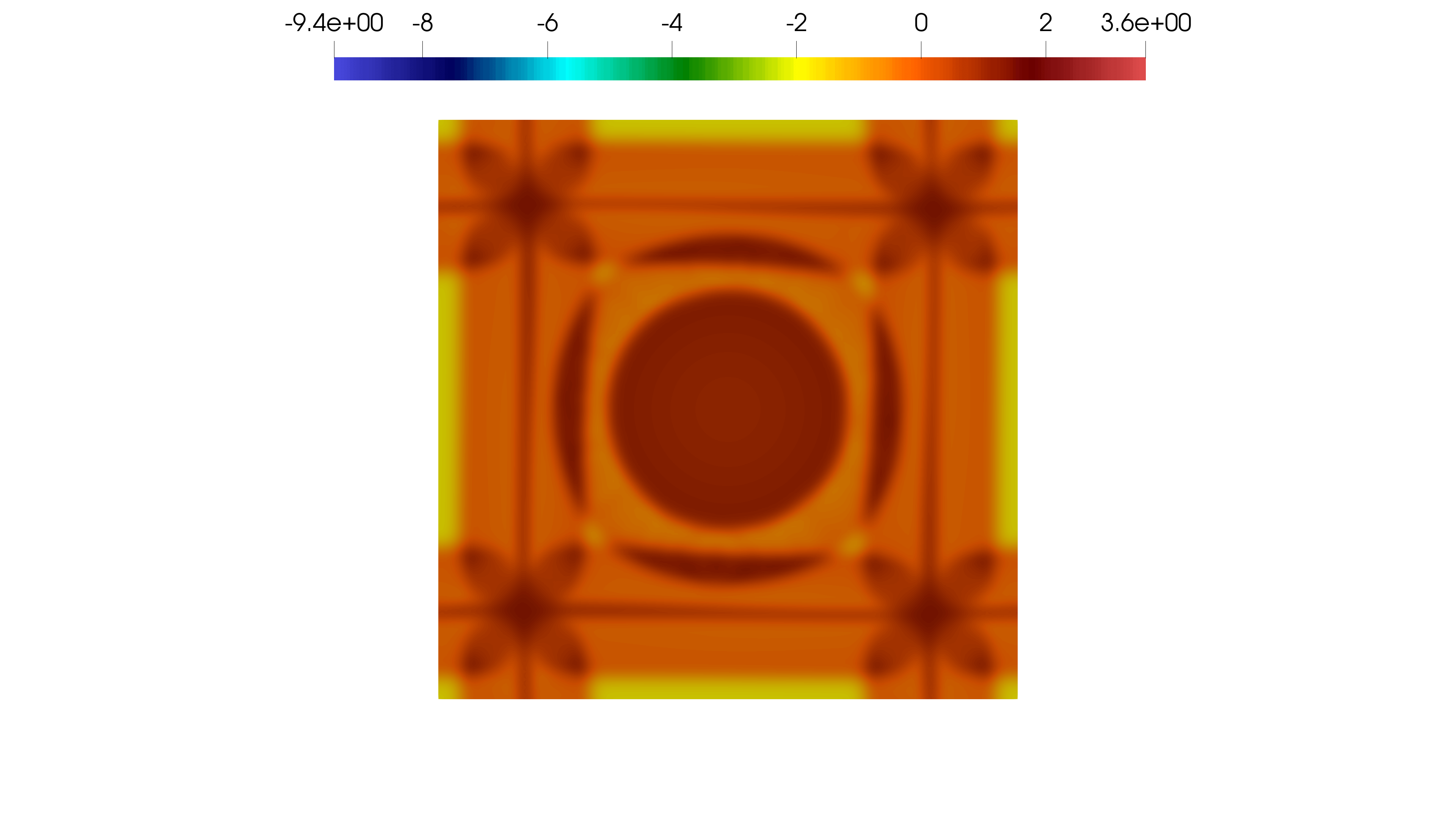} 
	&
	\includegraphics[trim={39cm 9.4cm 39.cm 9.5cm},clip,scale=0.042]{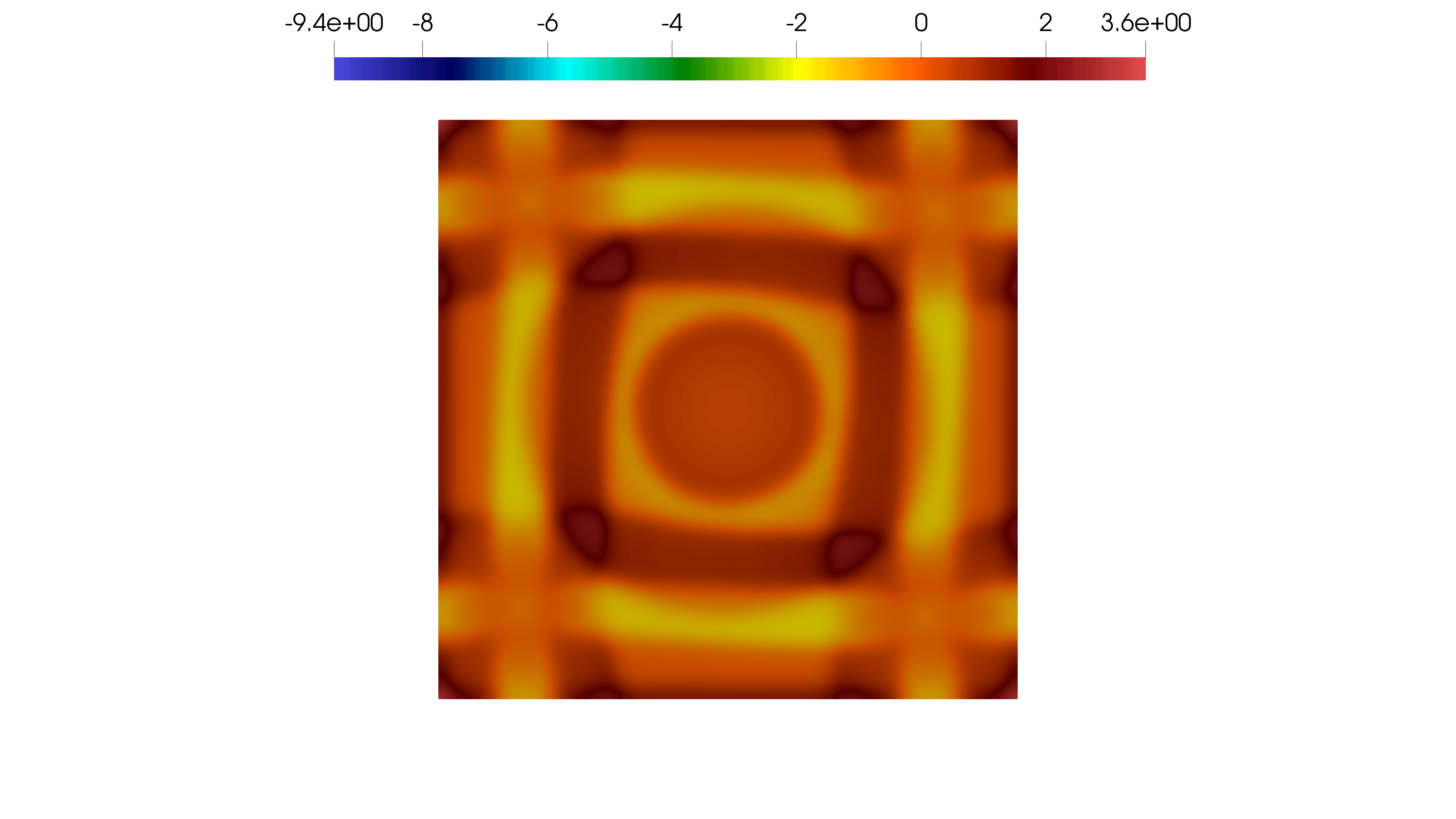} 
	&
	\includegraphics[trim={39cm 9.4cm 39.cm 9.5cm},clip,scale=0.042]{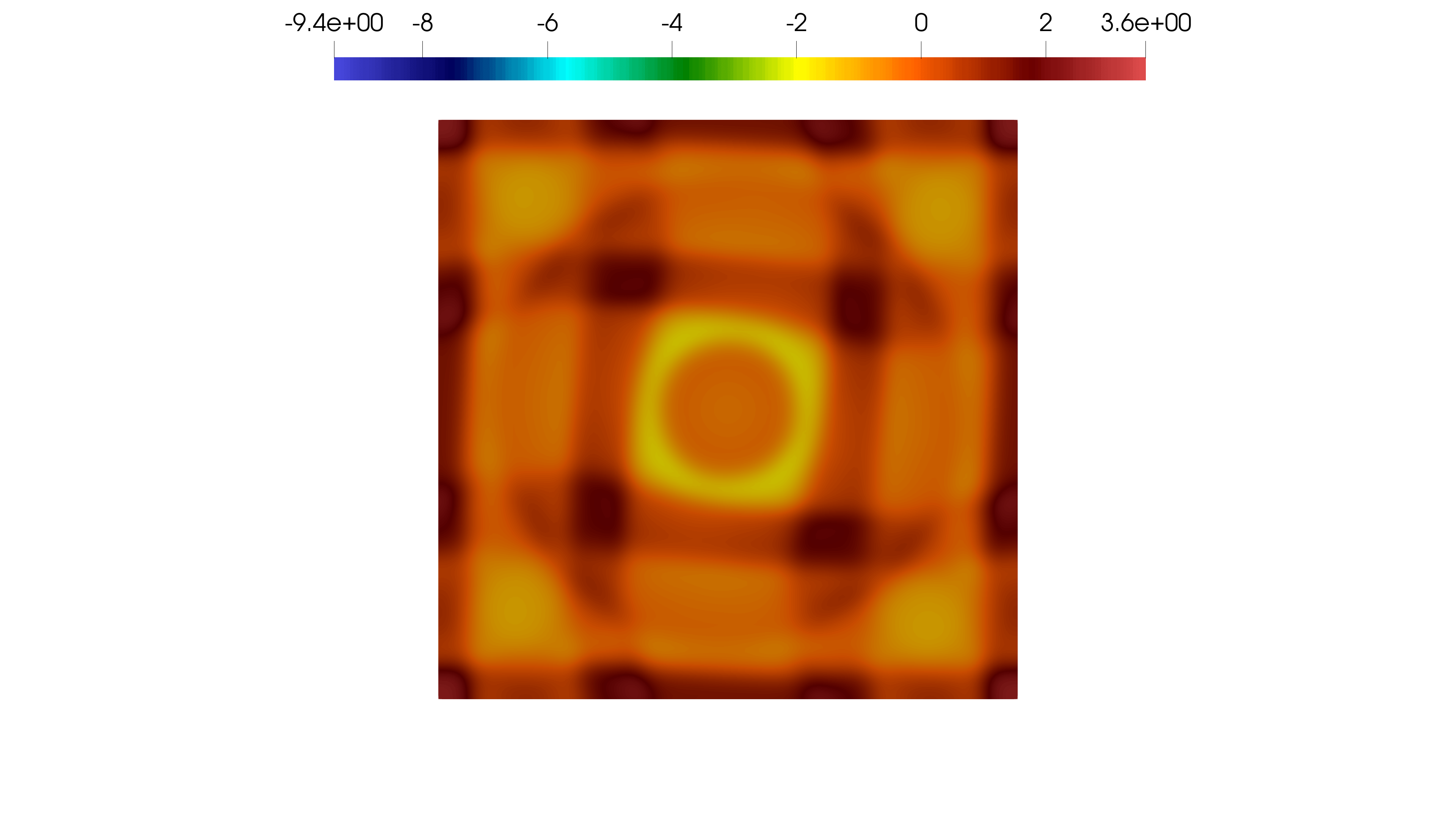} 
	& 
	\includegraphics[trim={39cm 9.4cm 39.cm 9.5cm},clip,scale=0.042]{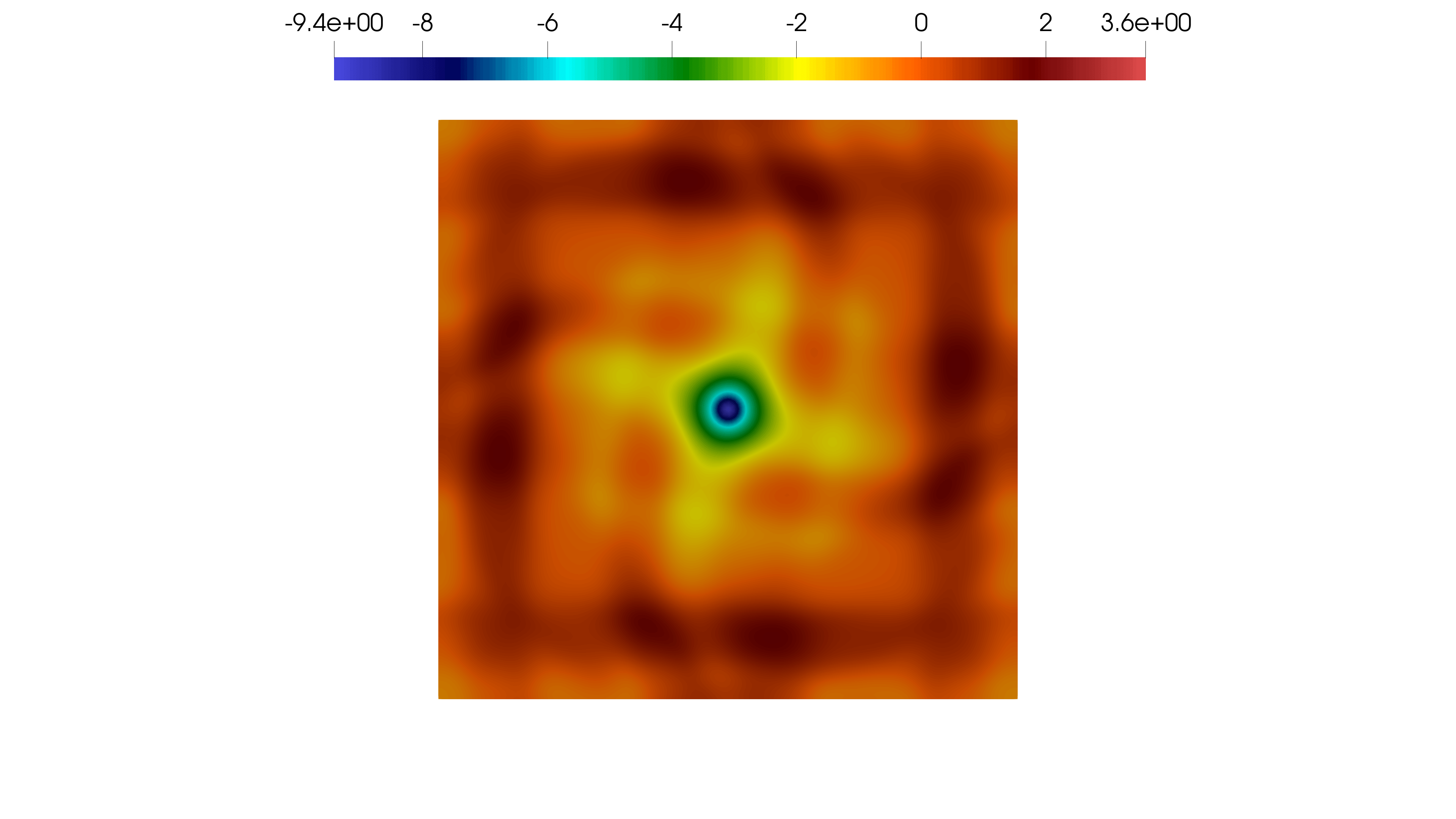} 
	&
	\includegraphics[trim={39cm 9.4cm 39.cm 9.5cm},clip,scale=0.042]{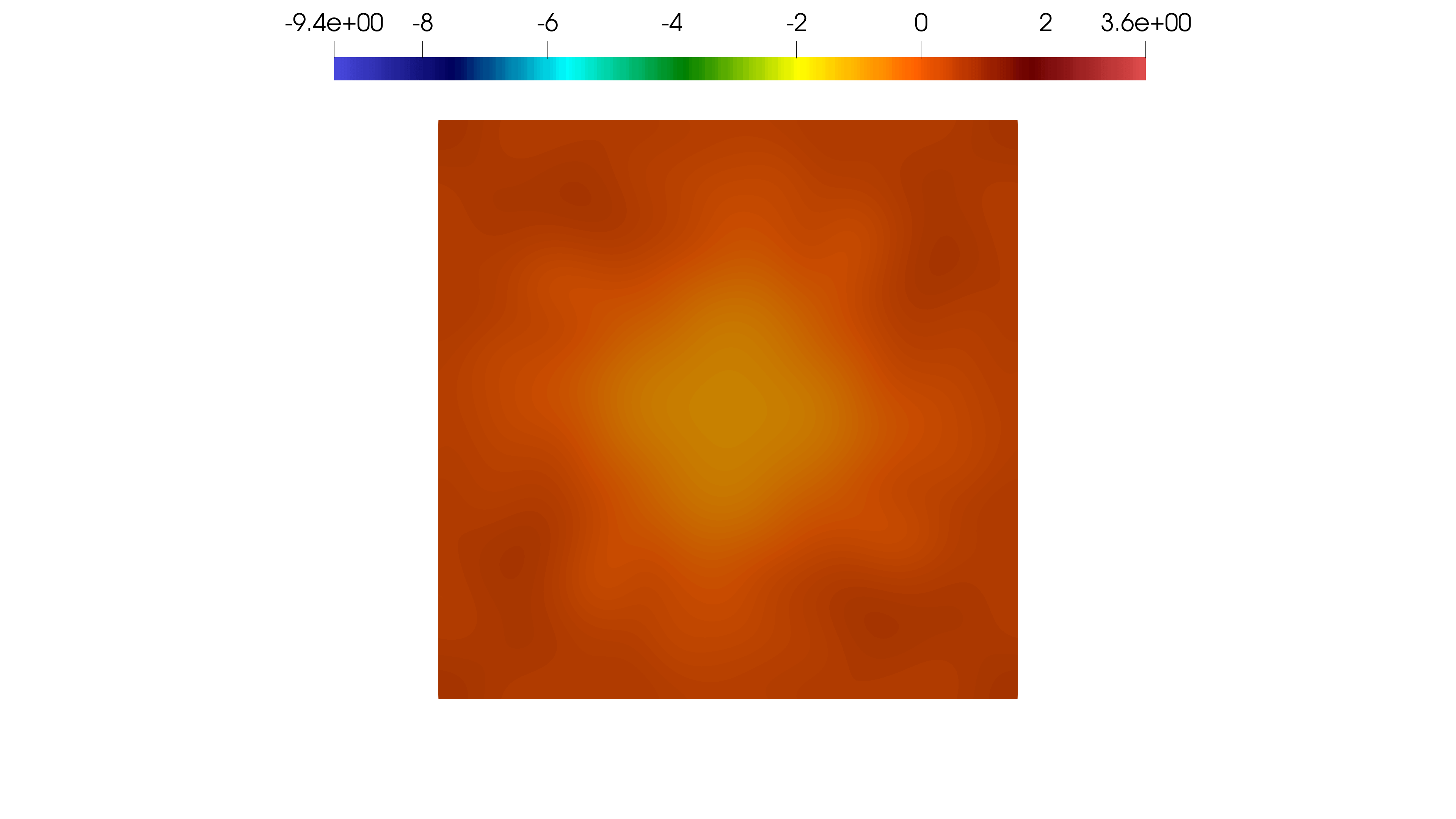} 
\end{tabular}
\caption{Snapshots of the total mass density $\rho$ (upper row), velocity magnitude $|\bm{u}|^2$, and pressure $p$ at times $t=0.001,0.02,0.04,0.06,0.1,0.5$ for the experiment in Section \ref{subsec:experiment}.}
\label{fig.up}
\end{figure}

The relative energy $E_{\rm rel}$ is a very useful tool to measure the distance between two solutions of cross-diffusion systems and in particular of the Maxwell--Stefan equations. In contrast to the usual $L^2$ distance, the relative energy takes care of the highly nonlinear structure of cross-diffusion equations. Here, we compare the time-dependent solution and the steady state:
\begin{align*}
  E_{\rm rel} = \int_\Omega\bigg(\sum_{i=1}^n\rho_i\log\frac{\rho_i}{\rho_i^\infty}
  - \rho\log\frac{\rho}{\rho^\infty} 
  + \frac{\rho}{2}|\bm{u}-\bm{u}^\infty|^2\bigg)\dd x,
\end{align*}
where the stationary variables are defined by
\begin{align*}
  \rho_i^\infty = \langle\rho_i^0,\mathrm{1}\rangle, \quad
  \bm{u}^\infty = \frac{\langle\rho^0\bm{u}^0,1\rangle}{\langle\rho^0,1\rangle}.
\end{align*}
Moreover, $\rho^\infty:=\sum_{i=1}^N\rho_i^\infty$ is the stationary total mass, and the stationary pressure vanishes, $p^\infty=0$. We note that the relative energy can also be used to derive error estimates, cf. \cite{Brunk25,BrunkM3AS,MariaBook}.

Figure \ref{fig.ener} shows the time decay of the discrete variant of the relative energy. It is computed by NGSolve by using a fifth-order quadrature rule. We observe that the relative energy decays with time (nearly exponentially for large times). Interestingly, the components of the relative energy (the relative internal energy and the relative kinetic energy) do not decrease in time, but only its sum. This is in contrast to the incompressible Navier--Stokes--Maxwell--Stefan model, where the kinetic and internal energies are both decaying in time. This is not surprising, since in that model, the Maxwell--Stefan diffusion and Navier--Stokes flow are basically decoupled, while the coupling is much stronger in our model. Again the partial and total masses and the pointwise quasi-incompressibility constraint are conserved up to errors of order $10^{-14}$.

\begin{figure}[ht]
\centering
\includegraphics[width=100mm]{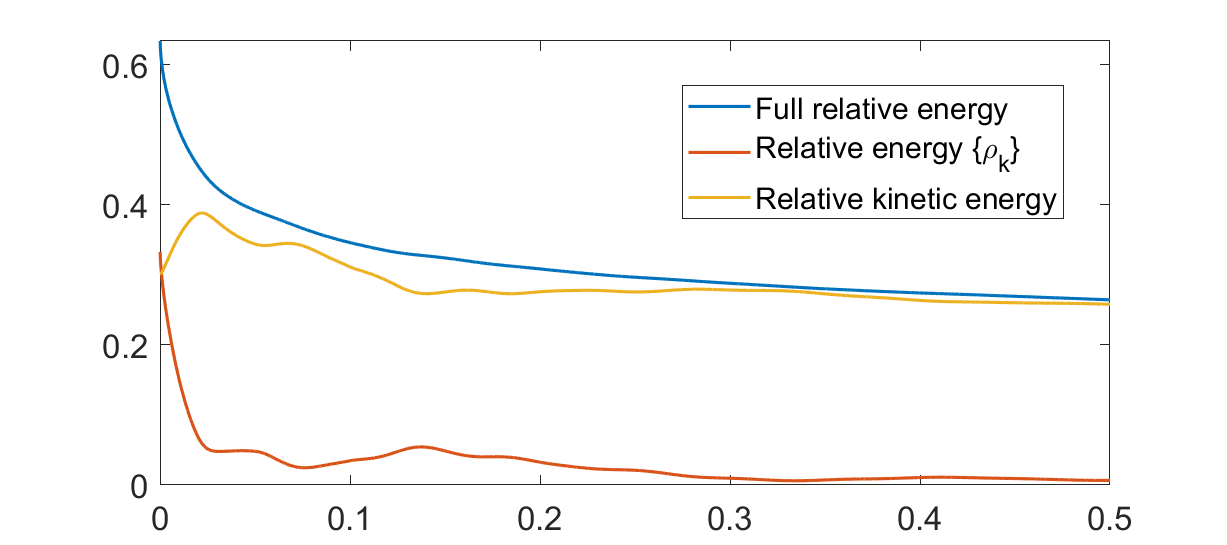}
\caption{Relative energy versus time for the experiment in Section \ref{subsec:experiment}.}
\label{fig.ener}
\end{figure}

\subsection{Three-component experiment II}\label{subsec:experiment2}

We present a second three-component experiment with the same data as in the previous subsection, except $\eta=10^{-1}$, $M_{ij}(\pvec{\rho}) =10^{-1}(\rho_i\delta_{ij} - \tfrac{\rho_i\rho_j}{\rho}),$ $V_1=0.35$, $V_2=0.65$, $V_3=0.5$ and 
\begin{align*}
  \rho_1^0(x,y) &= 0.5 + 0.49999\tanh\bigg(
  \frac{\sqrt{(x-0.4)^2+(y-0.4)^2}-0.25}{10^{-2}\sqrt{2}}\bigg), \\
  \rho_2^0(x,y) &= 0.5 + 0.49999\tanh\bigg(
  \frac{\sqrt{(x-0.6)^2+(y-0.6)^2}-0.25}{10^{-2}\sqrt{2}}\bigg), \\
  \rho_3^0(x,y) &= V_3^{-1}\big(1-V_1\rho_1(x,y) - V_2\rho_3(x,y)\big), \\
  \bm{u}^0(x,y) &= \big(-\sin(\pi x)^2\sin(2\pi y),
  \sin(\pi y)^2\sin(2\pi x))^T.
\end{align*}  
Compared to the previous test, the effect of the initial velocity can be observed much better. Figure \ref{fig.test2} illustrates the dynamics of the partial and total mass densities at various times. Again, the mass densities diffuse over time and show some rotational effect, which is well seen for the component $\rho_3$. Figure \ref{fig.ener2} shows the decay in the relative energy. In this case, all relative energies separately decay towards zero.

\begin{figure}[ht]
\centering
\begin{tabular}{cccc}
     \multicolumn{4}{c}{\includegraphics[trim={28.0cm 67.5cm 28.0cm 0.2cm},clip,scale=0.15]{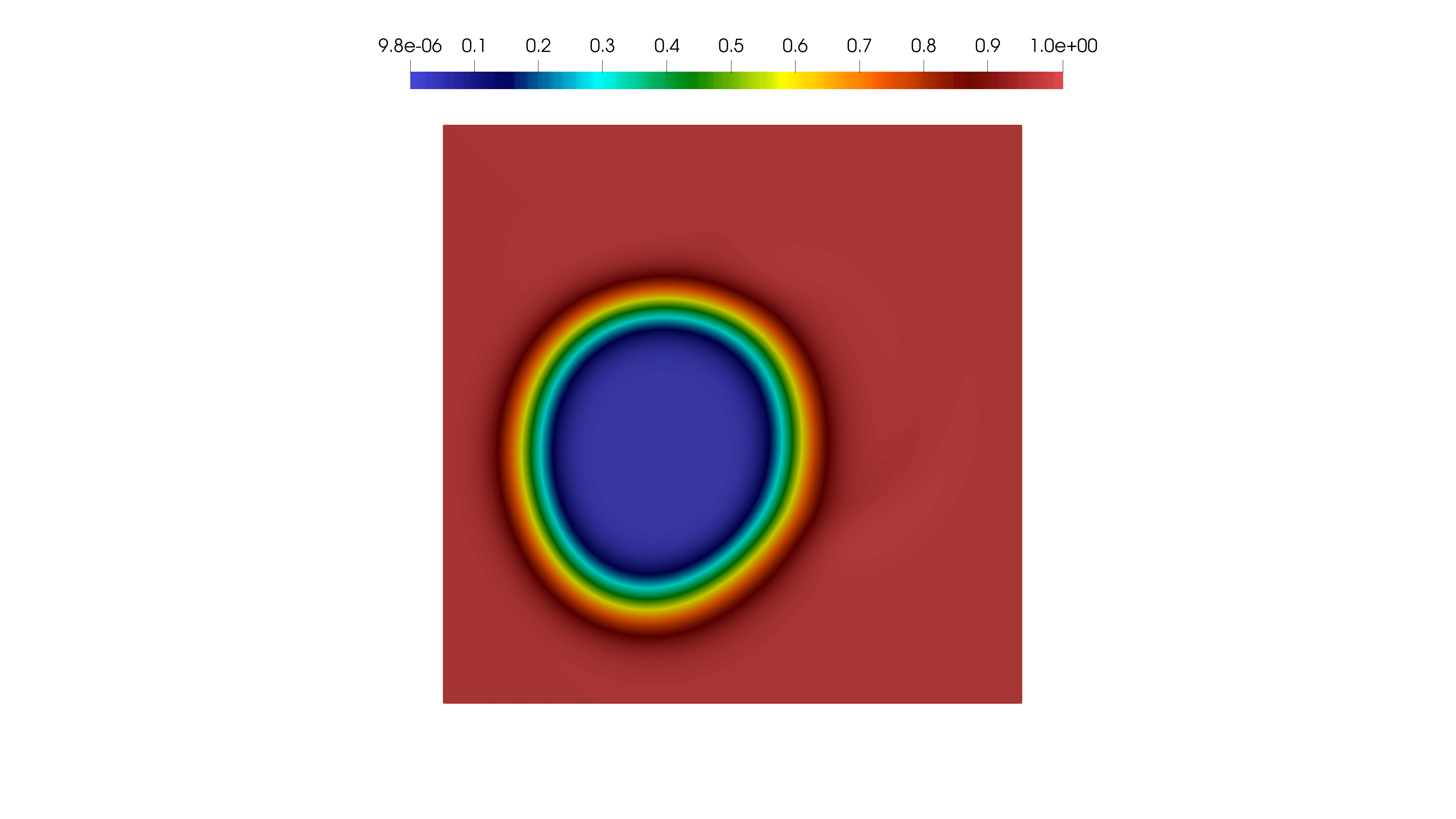}} \\[-0.5em]
     \includegraphics[trim={41cm 1.4cm 41.cm 8.5cm},clip,scale=0.052]{ test2rho_1.0020.png} 
    &
    \includegraphics[trim={41cm 1.4cm 41.cm 8.5cm},clip,scale=0.052]{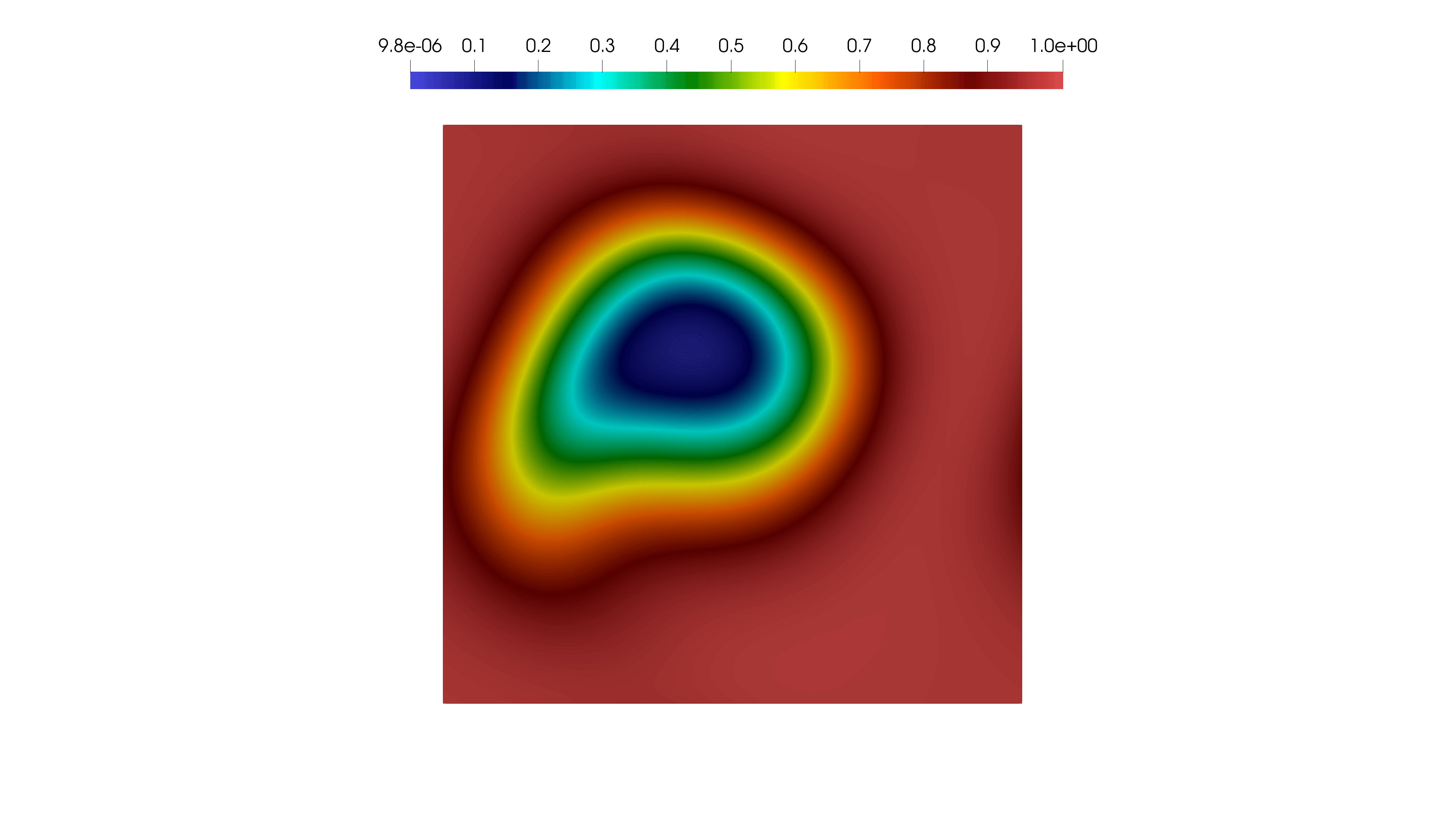}  
    &
    \includegraphics[trim={41cm 1.4cm 41.cm 8.5cm},clip,scale=0.052]{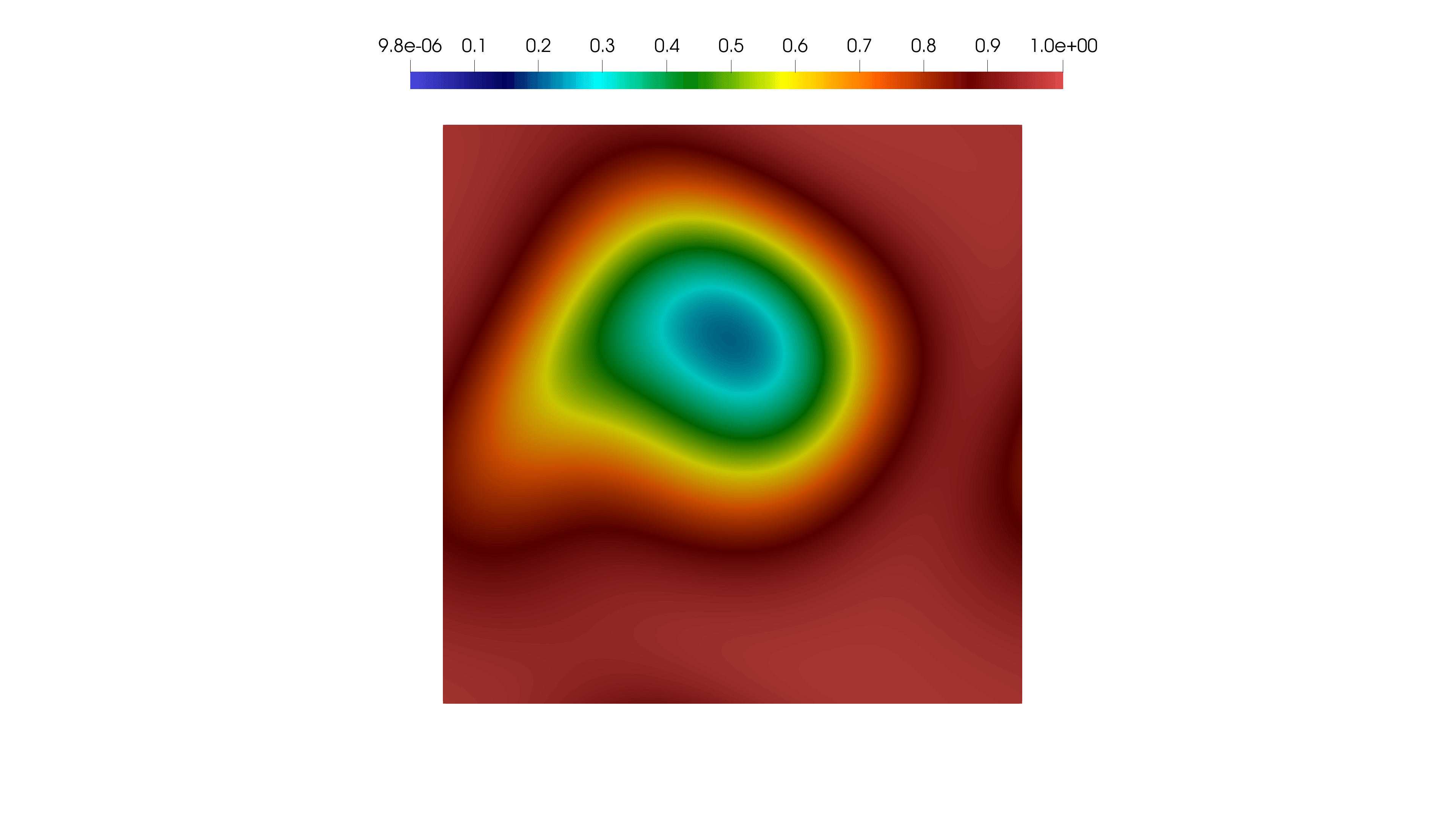} 
    &
    \includegraphics[trim={41cm 1.4cm 41.cm 8.5cm},clip,scale=0.052]{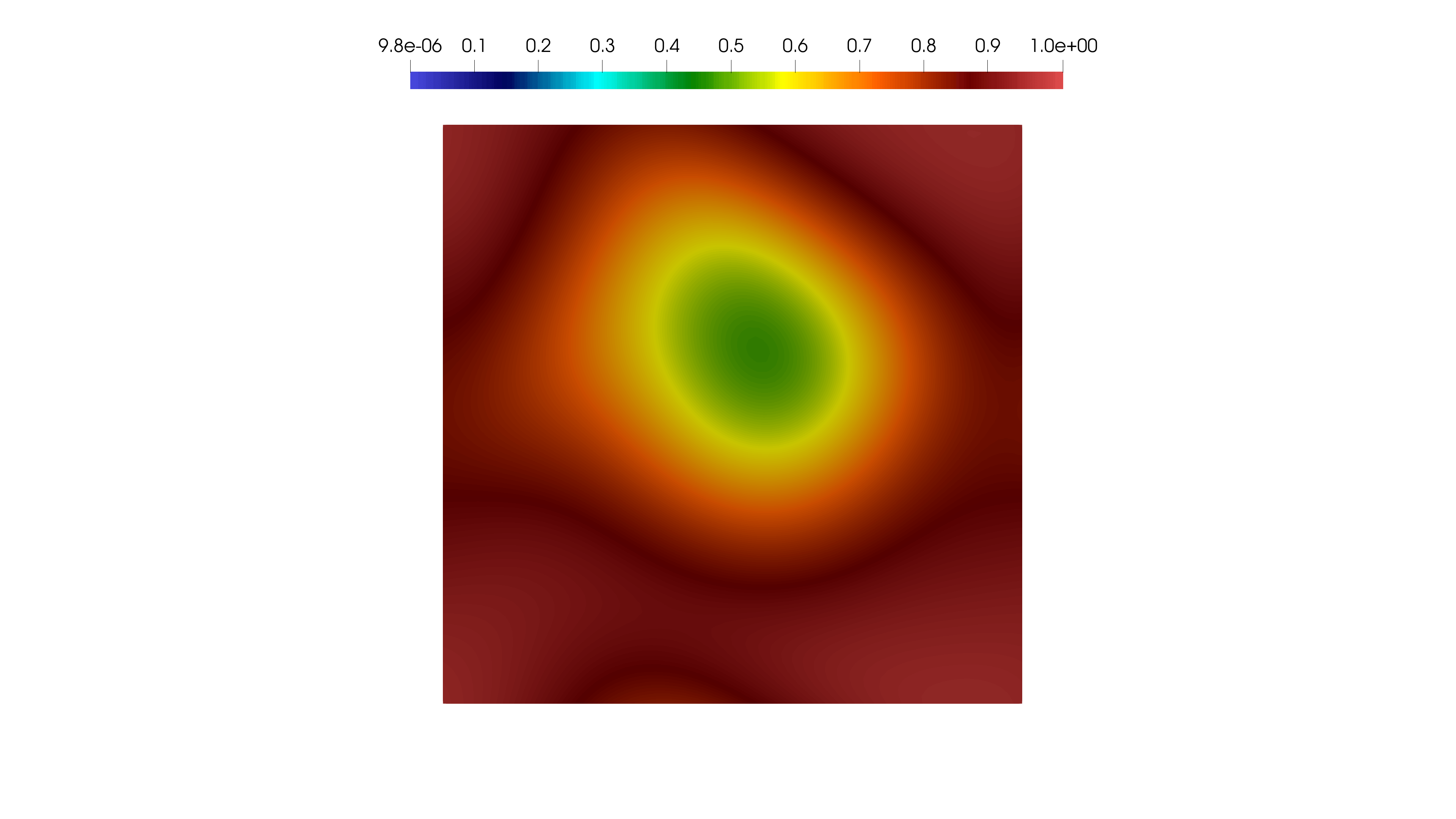}  \\
    \multicolumn{4}{c}{\includegraphics[trim={28.0cm 67.5cm 28.0cm 3.5cm},clip,scale=0.15]{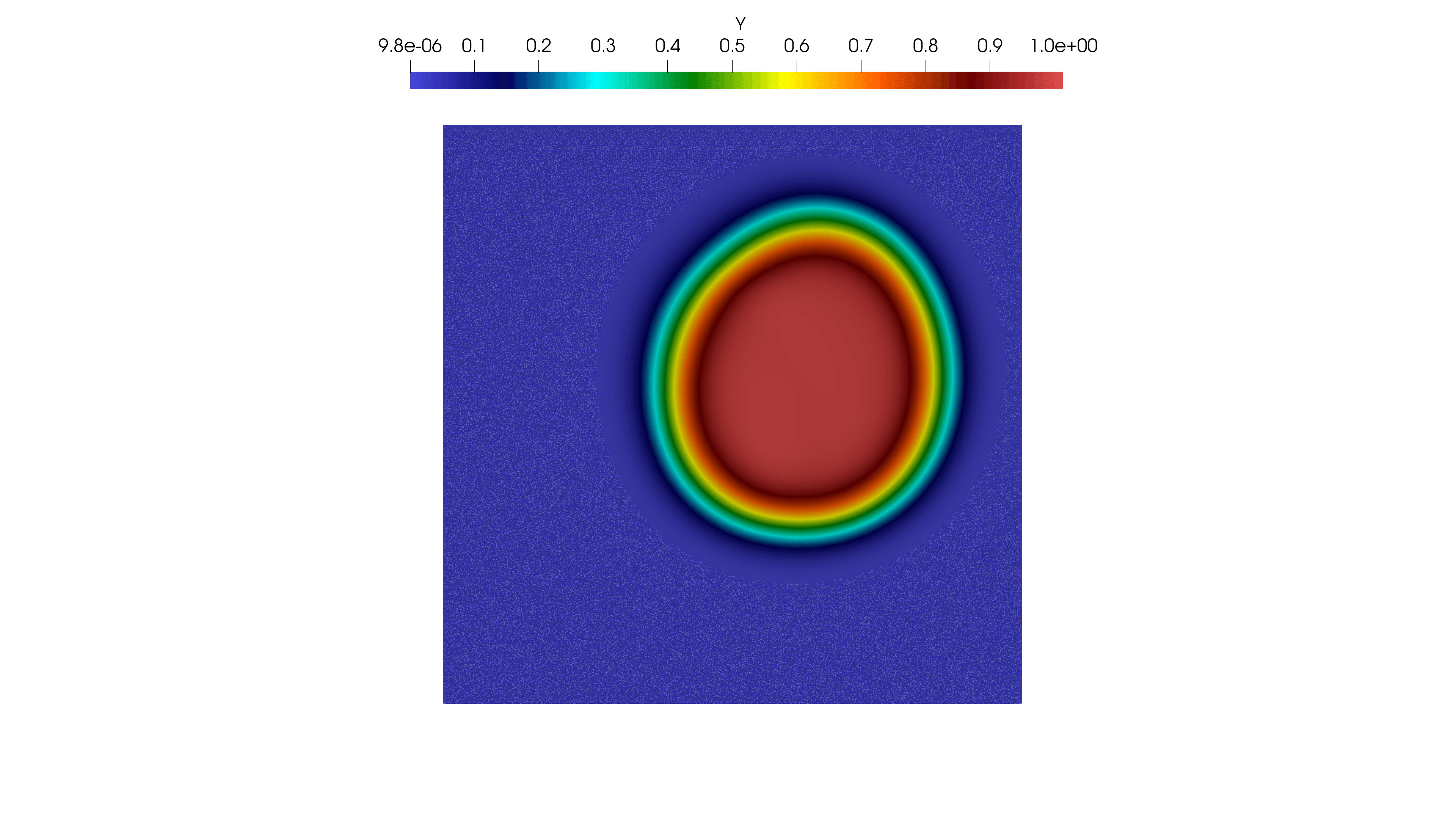}} \\[-0.5em]
     \includegraphics[trim={41cm 1.4cm 41.cm 8.5cm},clip,scale=0.052]{ test2rho_2.0020.png} 
    &
    \includegraphics[trim={41cm 1.4cm 41.cm 8.5cm},clip,scale=0.052]{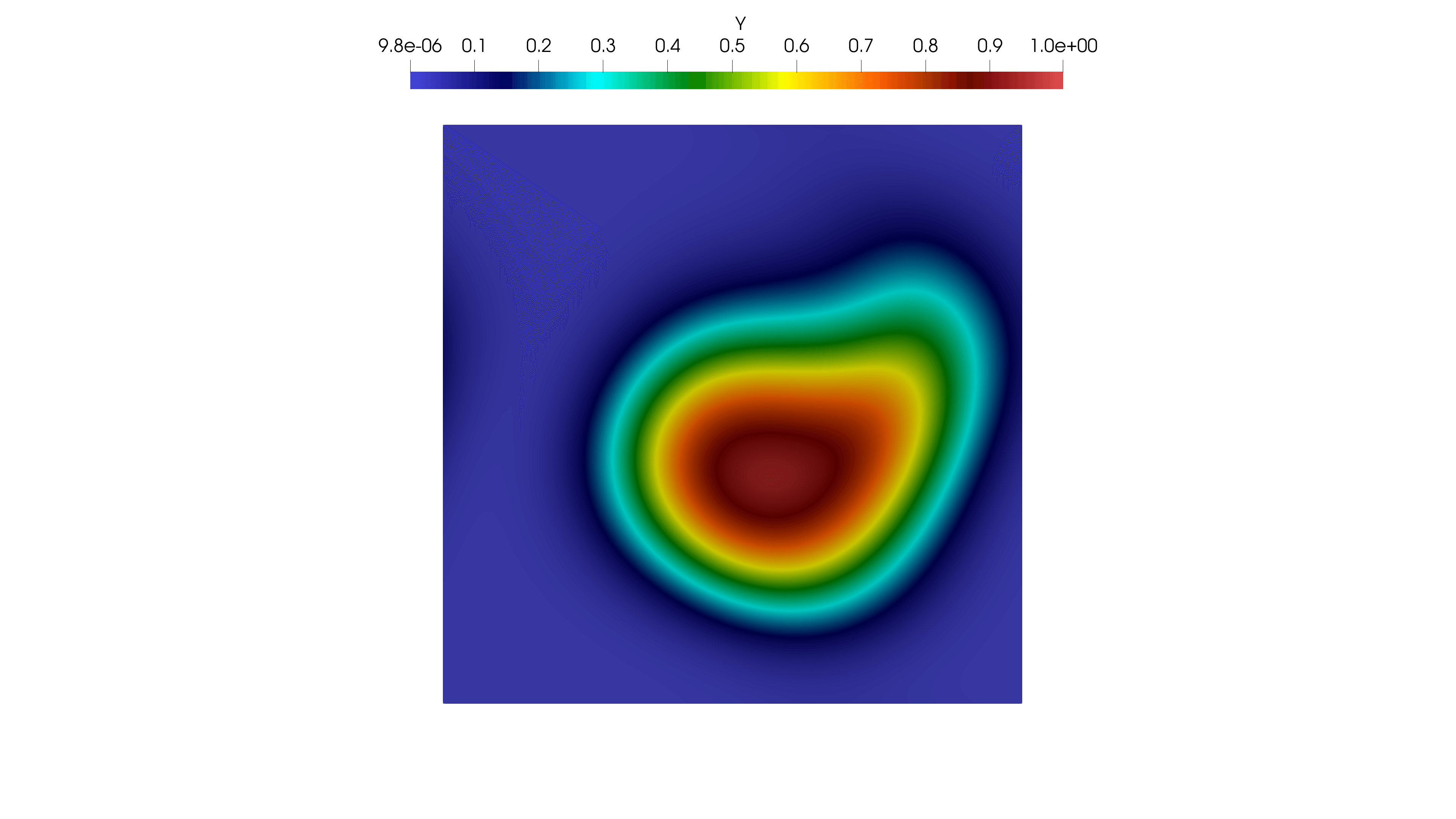}  
    &
    \includegraphics[trim={41cm 1.4cm 41.cm 8.5cm},clip,scale=0.052]{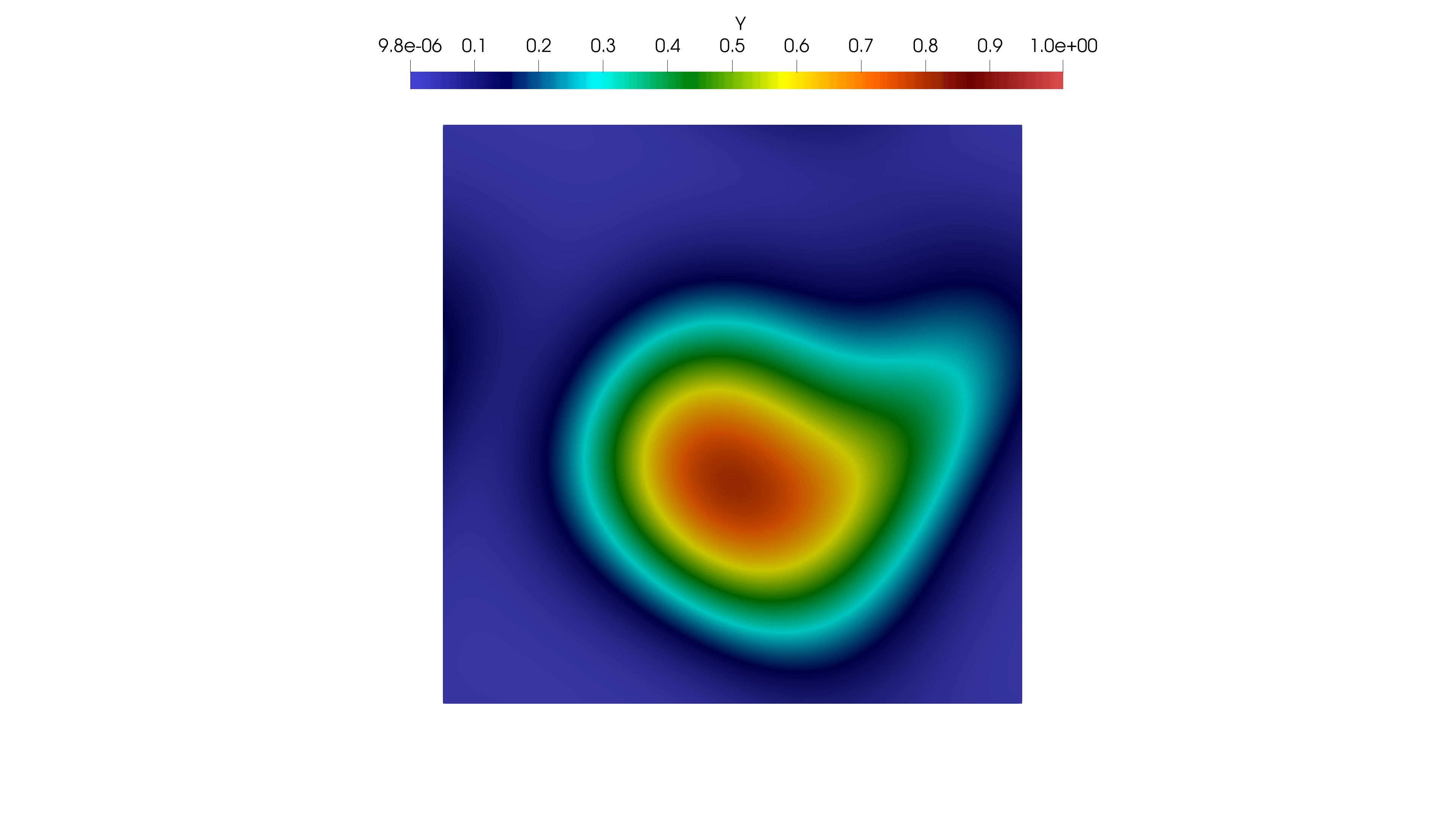} 
    &
    \includegraphics[trim={41cm 1.4cm 41.cm 8.5cm},clip,scale=0.052]{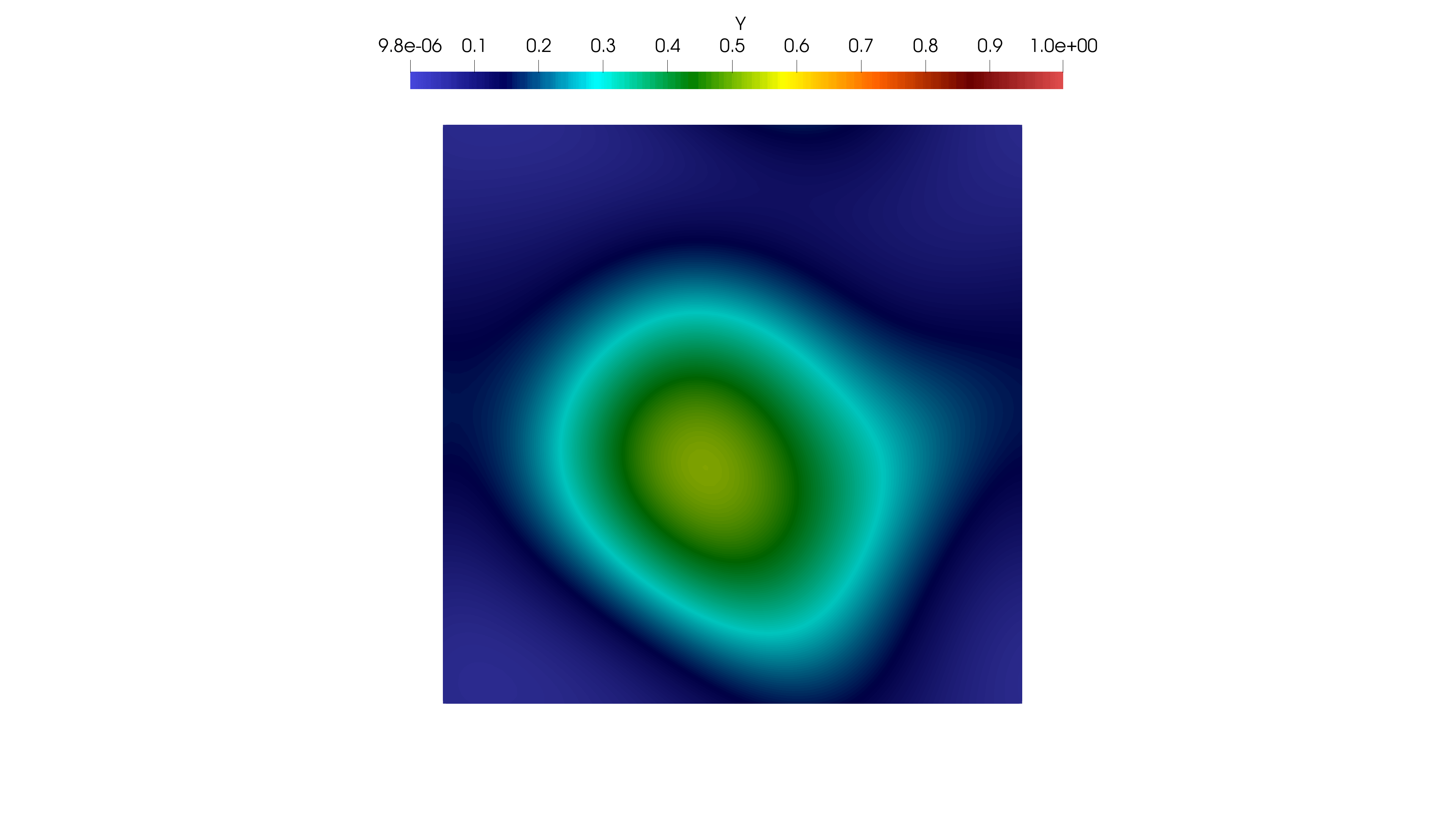}  \\
    \multicolumn{4}{c}{\includegraphics[trim={28.0cm 67.5cm 28.0cm 3.5cm},clip,scale=0.15]{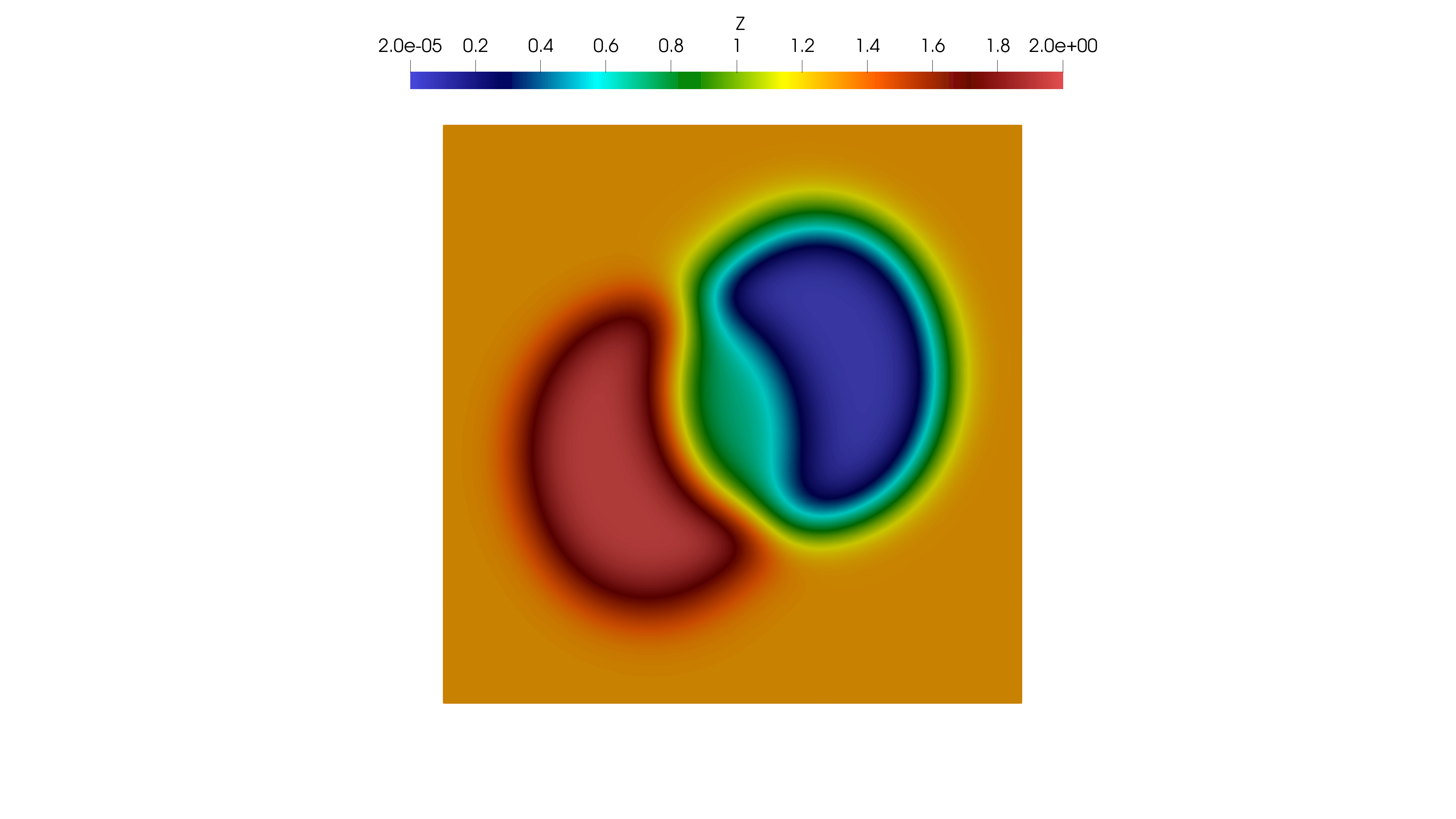}} \\[-0.5em]
     \includegraphics[trim={41cm 1.4cm 41.cm 8.5cm},clip,scale=0.052]{ test2rho_3.0020.png} 
    &
    \includegraphics[trim={41cm 1.4cm 41.cm 8.5cm},clip,scale=0.052]{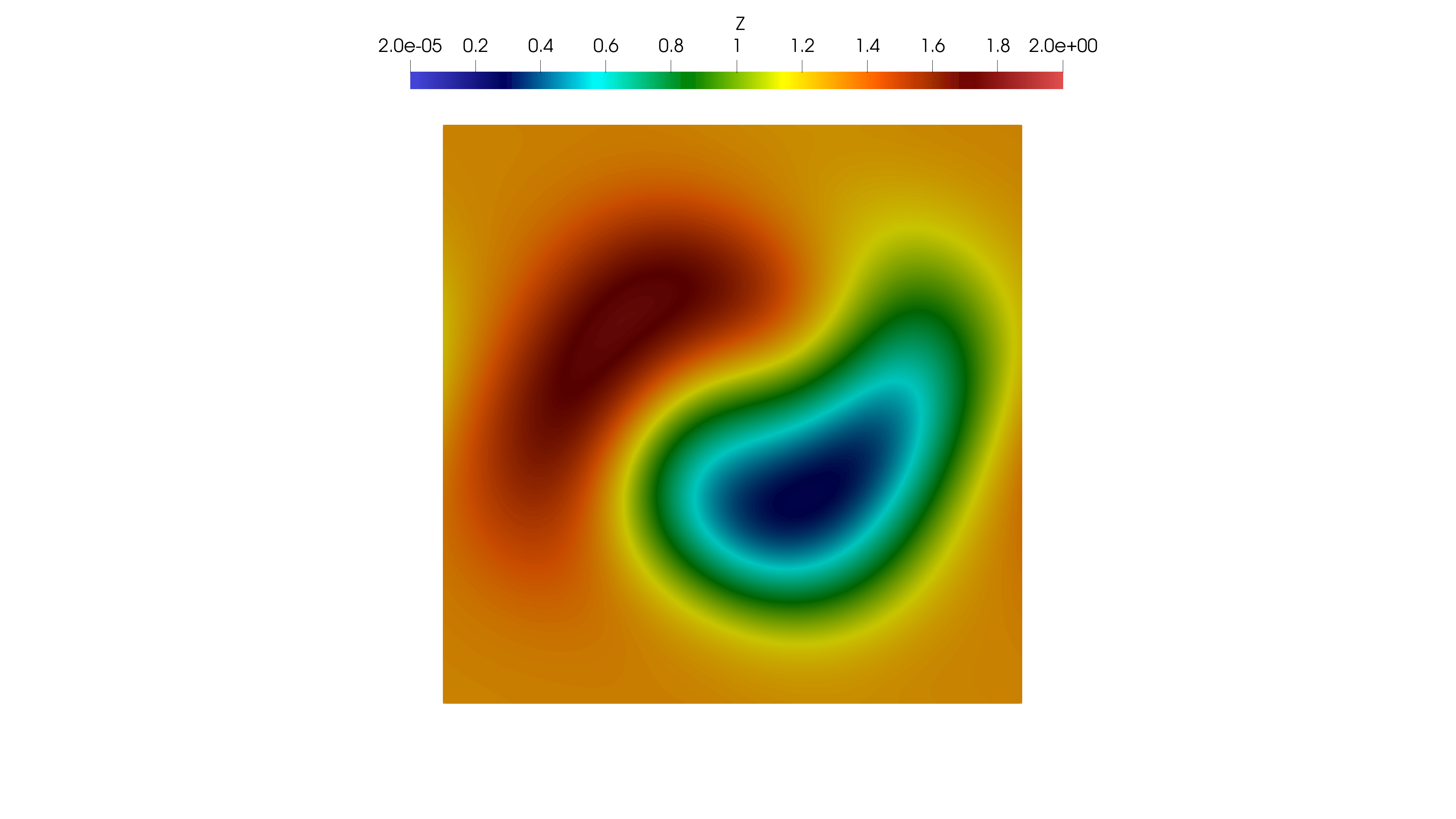}  
    &
    \includegraphics[trim={41cm 1.4cm 41.cm 8.5cm},clip,scale=0.052]{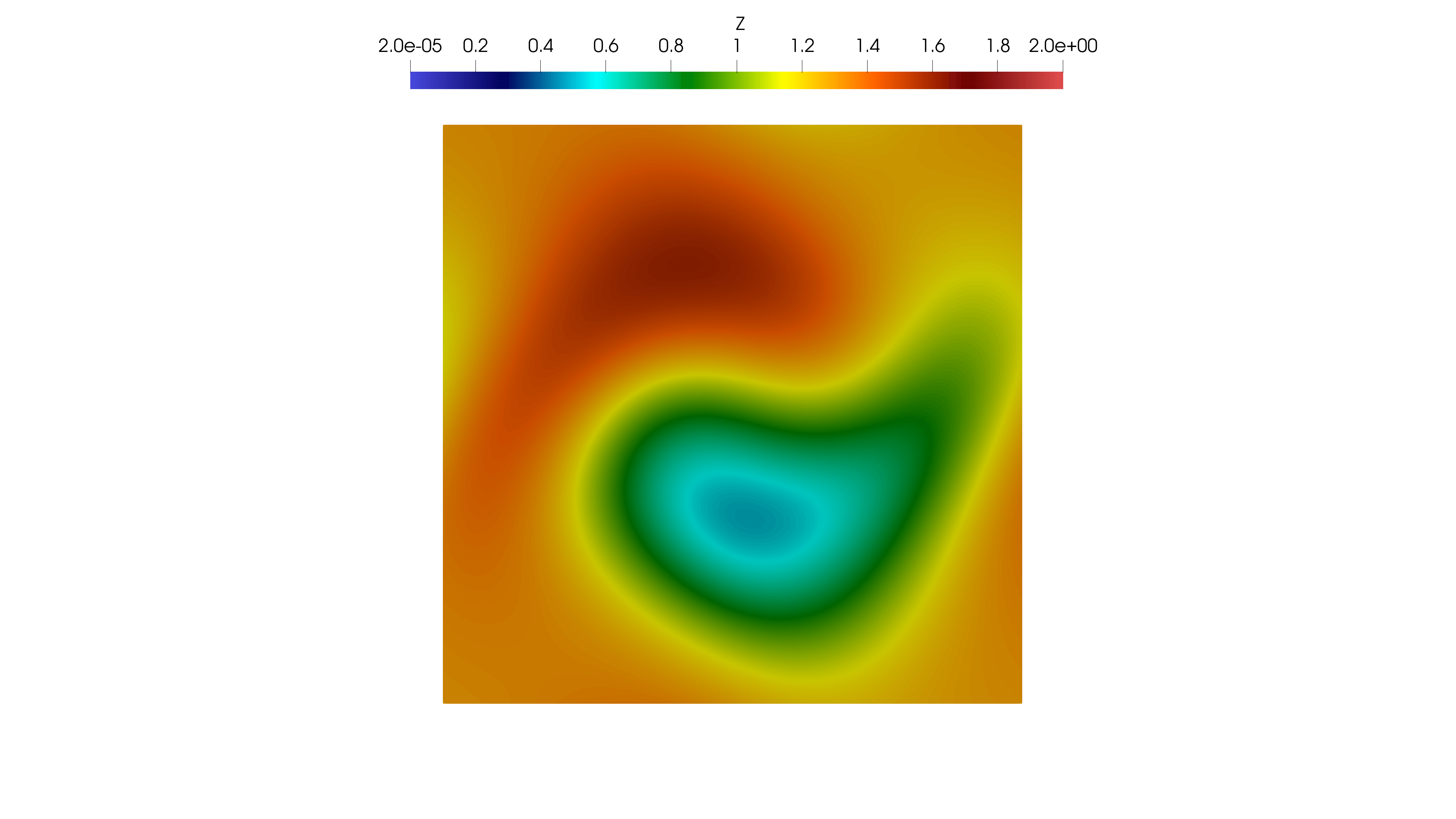} 
    &
    \includegraphics[trim={41cm 1.4cm 41.cm 8.5cm},clip,scale=0.052]{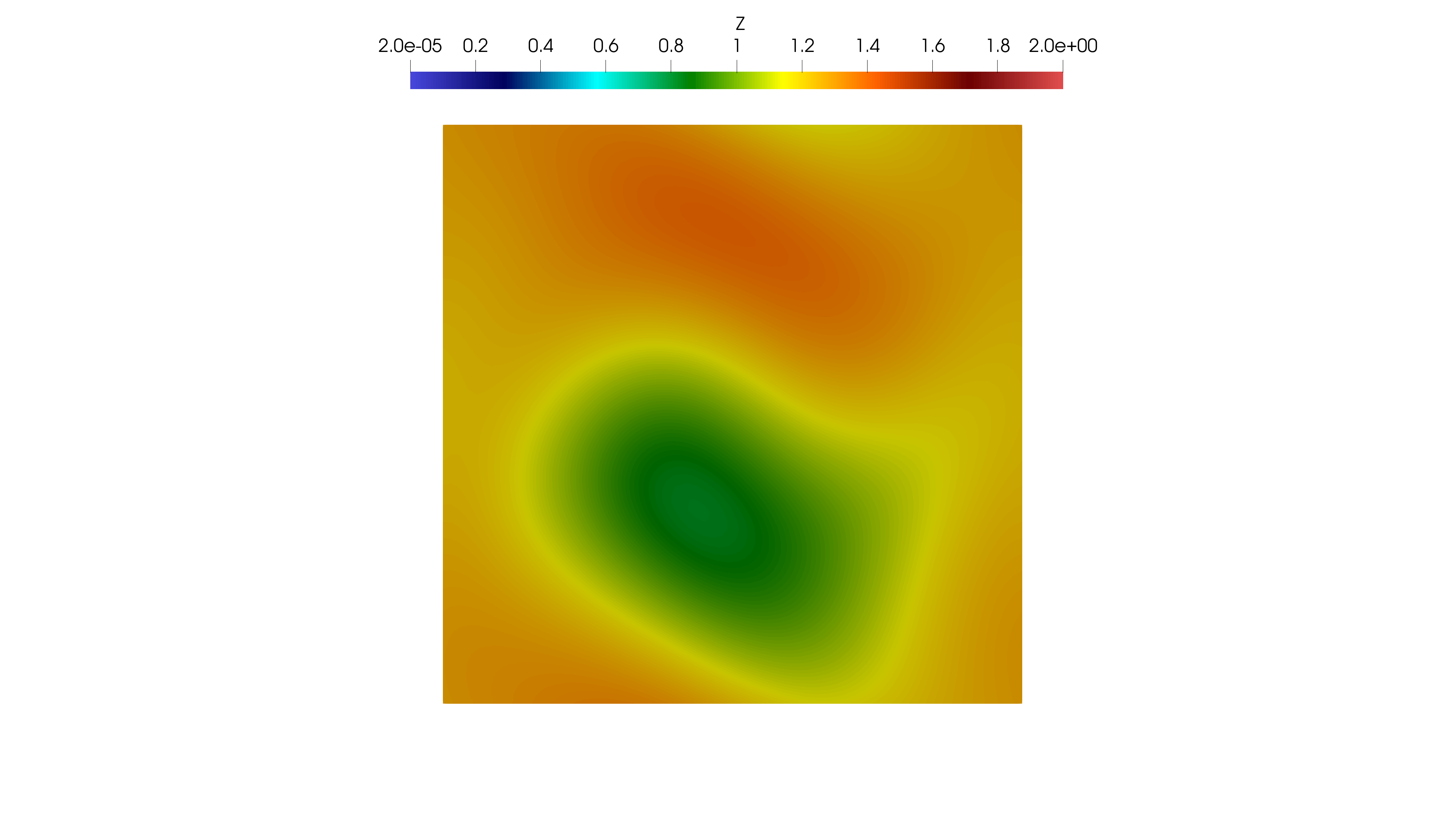} \\
    \multicolumn{4}{c}{\includegraphics[trim={28.0cm 67.5cm 28.0cm 2.5cm},clip,scale=0.15]{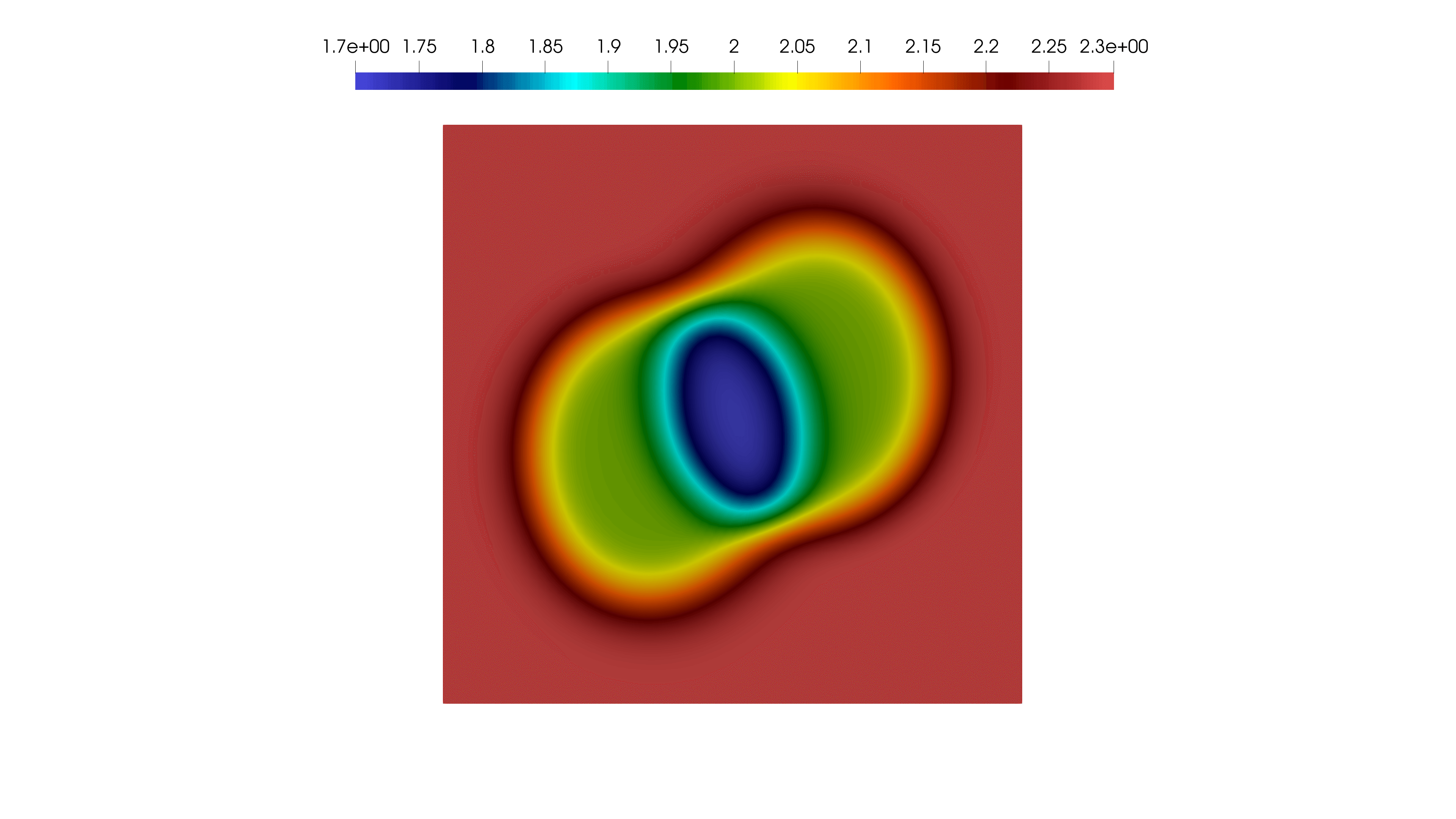}} \\[-0.5em]
     \includegraphics[trim={41cm 8.4cm 41.cm 8.5cm},clip,scale=0.052]{ test2rho_full.0020.png}
    &
    \includegraphics[trim={41cm 8.4cm 41cm 8.5cm},clip,scale=0.052]{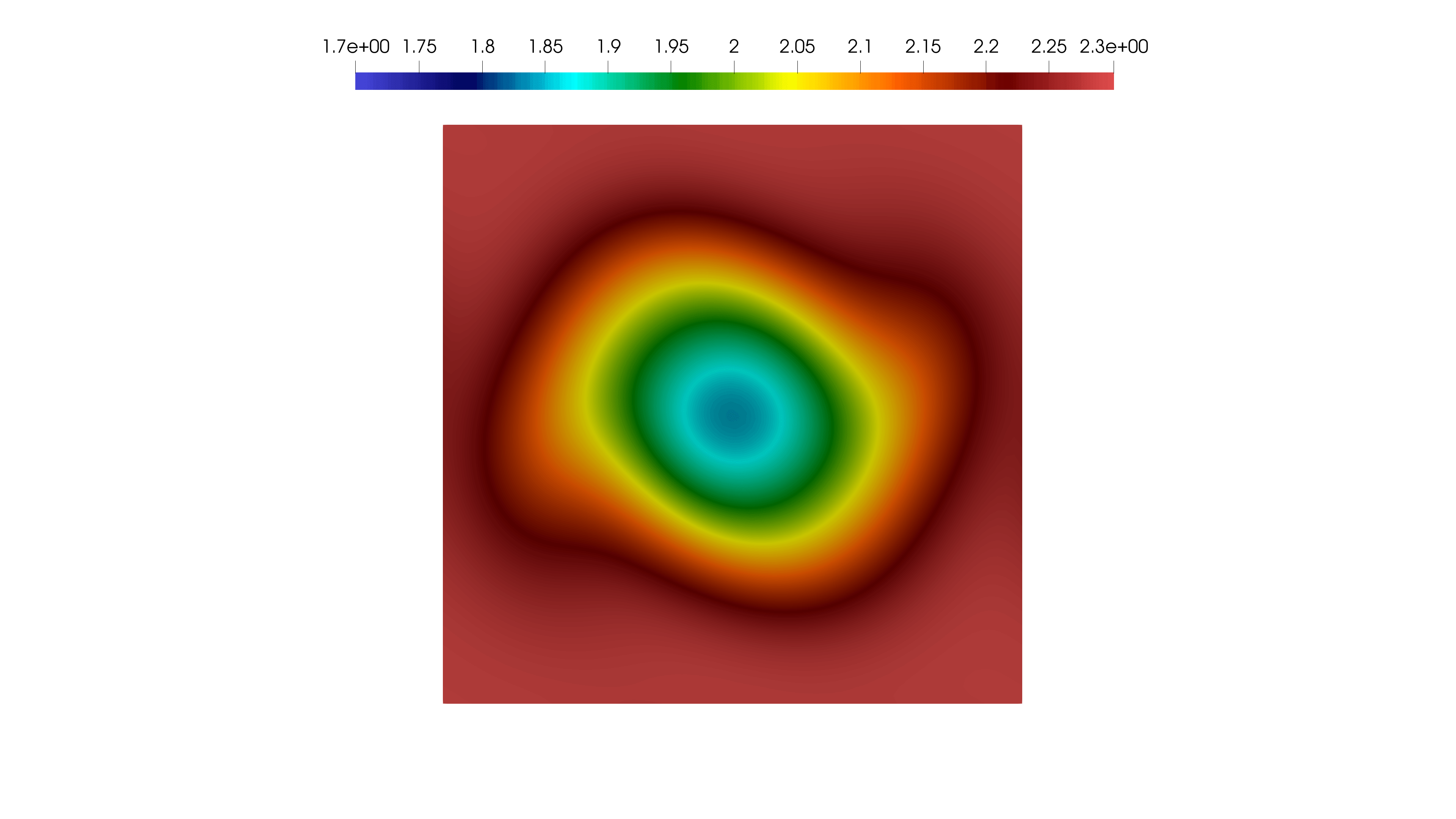}  
    &
    \includegraphics[trim={41cm 8.4cm 41cm 8.5cm},clip,scale=0.052]{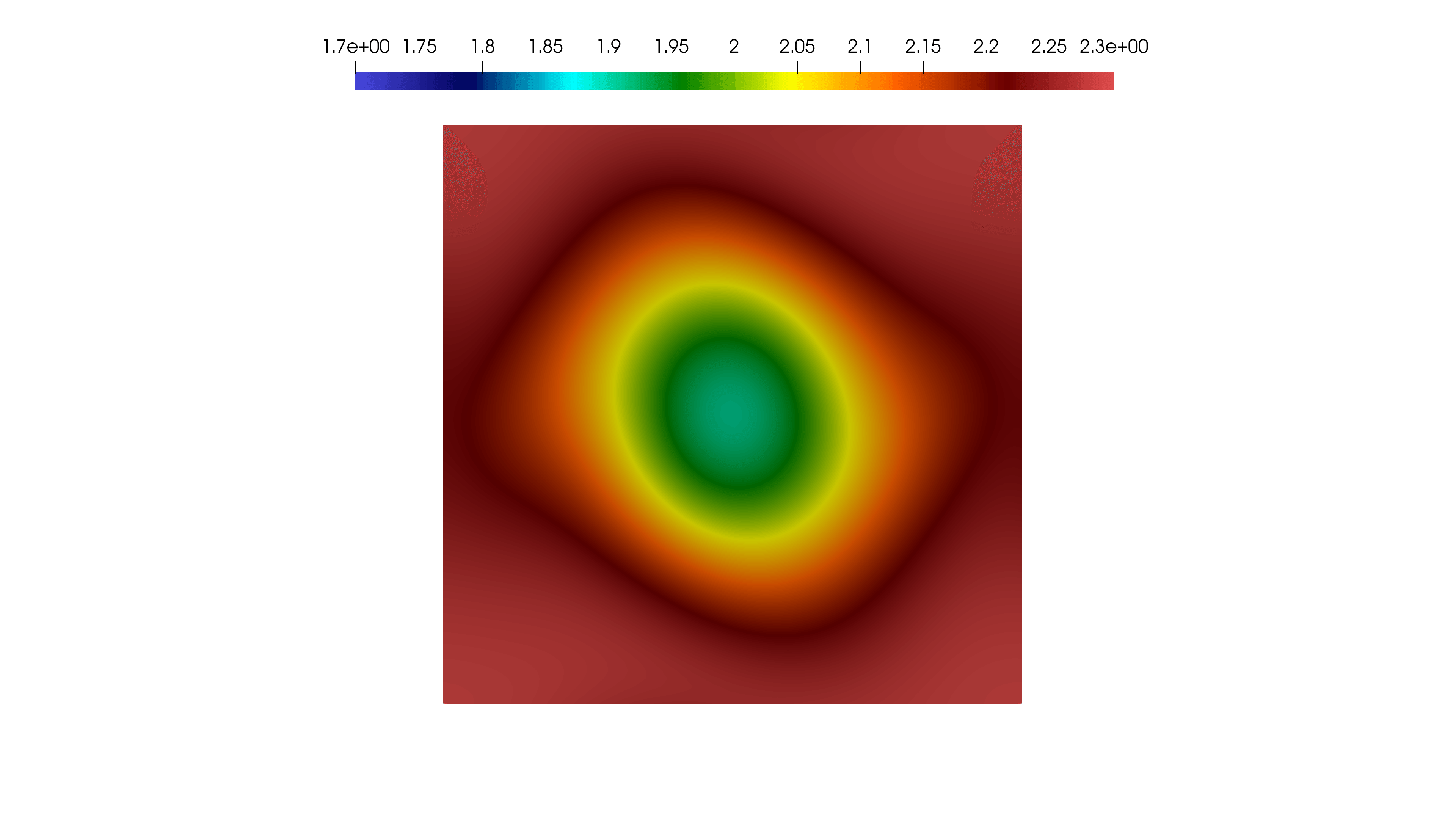} 
    &
    \includegraphics[trim={41cm 8.4cm 41cm 8.5cm},clip,scale=0.052]{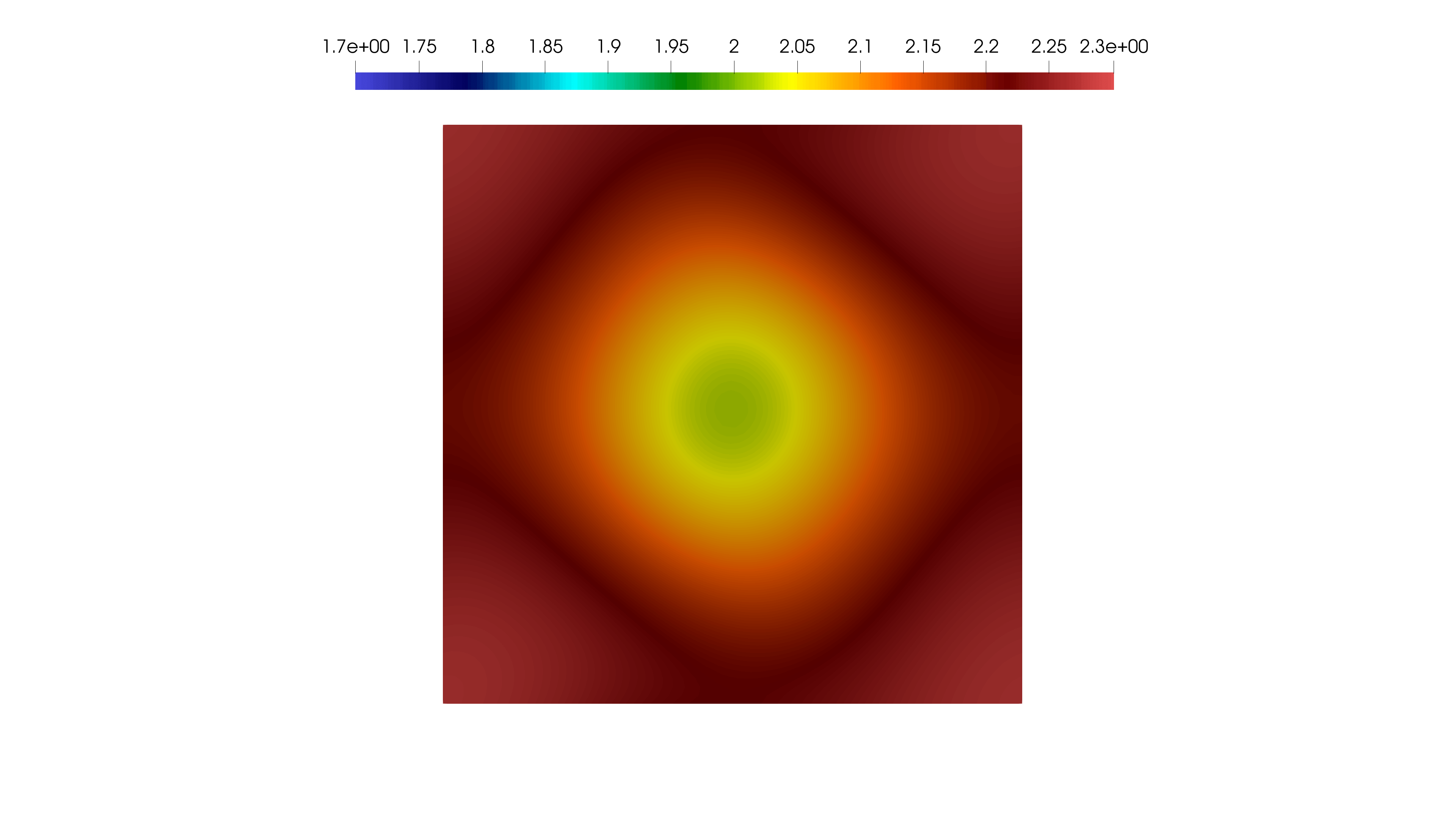}  \\
\end{tabular}
    \caption{Snapshots of the partial mass densities $\rho_{1}$ (first row), $\rho_2$ (second row), $\rho_3$ (third row) and the total mass density $\rho$ (last row) at times $t=0.1,0.6,1,2$. for the experiment in Section \ref{subsec:experiment2}.}\label{fig.test2}
\end{figure}

\begin{figure}[ht]
\centering
\includegraphics[width=100mm]{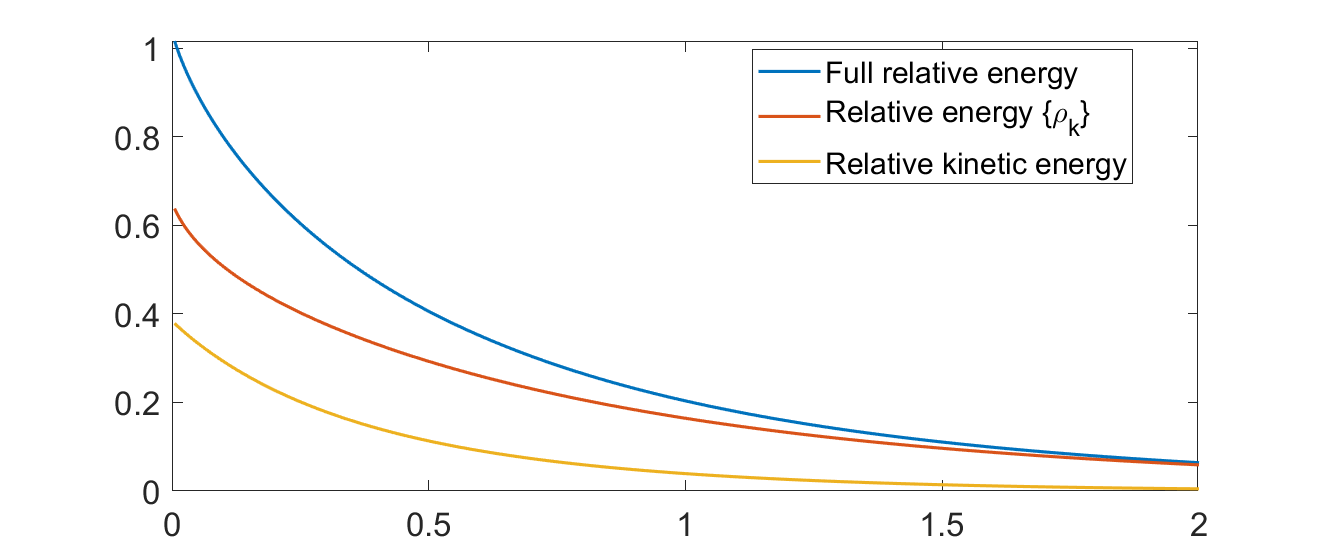}
\caption{Relative energy versus time for the experiment in Section \ref{subsec:experiment2}.}
\label{fig.ener2}
\end{figure}
%%%%%%%%%%%%%%%%%%%%%%%%%%%%%%%%%%%%%%%%%%%%%%%%%%%%%%%%%%%%%%%

\section{Conclusion and outlook}

We have derived and analyzed a new structure-preserving finite element scheme for the Navier--Stokes--Maxwell--Stefan equations with a generalized incompressibility constraint. After a suitable variational formulation of the continuous problem, the space discretization is realized by a conforming piecewise linear/quadratic finite element approximation. The time discretization is of mixed explicit--implicit type, allowing us to prove that the quasi-incompressibility constraint is satisfied pointwise in space. The resulting numerical scheme conserves the partial and total masses and satisfies a discrete energy inequality with a nonnegative numerical dissipation term. Convergence tests for a two-component mixture verify the expected convergence orders and structure-preserving properties. The numerical tests for a three-component mixture exhibit some rotational behavior of the mass densities, which is absent in the incompressible model. 

Future work will be concerned with the existence analysis of the numerical scheme and the proof of the positivity of the mass densities. One approach is to use the reformulation of the problem from \cite{Dru21} in terms of the so-called relative chemical potentials. A critical part of the proof is the $L^1(\Omega)$ bound for the pressure, since the proof in \cite[Prop.~7.4]{Dru21} relies on the (continuous) Bogovskiǐ operator. The hope is that the discrete counterpart of this operator can be used; see \cite{KPS18}. The (local-in-time) proof of \cite{BoDr21} does not use the Bogovskiǐ operator and may provide another approach to prove the existence of discrete solutions. The derivation of error estimates, using the relative energy approach, and the design of structure-preserving higher-order approximations similar to \cite{AF25} is future work.

%%%%%%%%%%%%%%%%%%%%%%%%%%%%%%%%%%%%%%%%%%%%%%%%%%%%%%%%%%%%%%%

\section*{Data availability statement}

This study did not generate or analyze any datasets. Therefore, data sharing is not applicable to this work.

%%%%%%%%%%%%%%%%%%%%%%%%%%%%%%%%%%%%%%%%%%%%%%%%%%%%%%%%%%%%%%%

\section*{Declarations}

The authors declare that they have no conflict of interest.

%%%%%%%%%%%%%%%%%%%%%%%%%%%%%%%%%%%%%%%%%%%%%%%%%%%%%%%%%%%%%%%

\end{document}